\newif\ifpersonal
\newif\ifarxiv
\tikzset{math3d/.style={x= {(1cm,0cm)}, z={(0cm,1cm)},y={(0.353cm,0.353cm)}}}
\newcommand{\FourCurveVexNorm}[5]{%

  \pgfmathsetmacro{\cst}{1/3}
  \pgfmathsetmacro{\Px}{#1}
  \pgfmathsetmacro{\Py}{#2}
  \pgfmathsetmacro{\dx}{#3}
  \pgfmathsetmacro{\dy}{#4}
  \pgfmathsetmacro{\Pangle}{atan(\dy/(\cst*\dx))-45}
  \pgfmathsetmacro{\Qangle}{atan(\cst*\dy/\dx)-120}
  
  \coordinate (P) at (\Px,\Py);
  \coordinate (Q) at (\Px+\dx,\Py+\dy);
  
  \draw[line width=1pt,color=black,
  name path=#5] (P) to[out=\Pangle,in=\Qangle] (Q);

  \foreach \i in {1,2,3,4}{%
    \pgfmathsetmacro{\My}{\Py+\i*\dy/6}
    \path[name path=tmpLine] (\Px,\My) -- ++(\dx,0);
    \draw [thick,fill=black,
    name intersections={of=#5 and tmpLine, by={M}}]
    (M) circle (1pt) node[left] {};
  }
}
\newcommand{\FourCurveVex}[5]{%

  \pgfmathsetmacro{\cst}{1/3}
  \pgfmathsetmacro{\Px}{#1}
  \pgfmathsetmacro{\Py}{#2}
  \pgfmathsetmacro{\dx}{#3}
  \pgfmathsetmacro{\dy}{#4}
  \pgfmathsetmacro{\Pangle}{atan(\dy/(\cst*\dx))-45}
  \pgfmathsetmacro{\Qangle}{atan(\cst*\dy/\dx)-120}
  
  \coordinate (P) at (\Px,\Py);
  \coordinate (Q) at (\Px+\dx,\Py+\dy);
  
  \draw[line width=1pt,color=black,
  name path=#5] (P) to[out=\Pangle,in=\Qangle] (Q);

  \foreach \i in {1,2,3,4}{%
    \pgfmathsetmacro{\My}{\Py+\i*\dy/5}
    \path[name path=tmpLine] (\Px,\My) -- ++(1,0);
    \draw [thick,fill=black,
    name intersections={of=#5 and tmpLine, by={M}}]
    (M) circle (1pt) node[left] {};
  }
\path[name path=tmpLine] (\Px,\Py+4*\dy/5) -- ++(1,0);
    \draw [thick,    name intersections={of=#5 and tmpLine, by={M}}]
    (M) circle (3.5pt) node[left] {};
\path[name path=tmpLine] (\Px,\Py+1.5*\dy/5) -- ++(1,0);
    \draw [thick,    name intersections={of=#5 and tmpLine, by={M}}]
    (M) circle (3.5pt) node[left] {};
}
\newcommand{\FourCurveVexSmall}[5]{%

  \pgfmathsetmacro{\cst}{1/3}
  \pgfmathsetmacro{\Px}{#1}
  \pgfmathsetmacro{\Py}{#2}
  \pgfmathsetmacro{\dx}{#3}
  \pgfmathsetmacro{\dy}{#4}
  \pgfmathsetmacro{\Pangle}{atan(\dy/(\cst*\dx))-45}
  \pgfmathsetmacro{\Qangle}{atan(\cst*\dy/\dx)-120}
  
  \coordinate (P) at (\Px,\Py);
  \coordinate (Q) at (\Px+\dx,\Py+\dy);
  
  \draw[line width=1pt,color=black,
  name path=#5] (P) to[out=\Pangle,in=\Qangle] (Q);

  \foreach \i in {1,2,3,4}{%
    \pgfmathsetmacro{\My}{\Py+\i*\dy/5}
    \path[name path=tmpLine] (\Px,\My) -- ++(1,0);
    \draw [thick,fill=black,
    name intersections={of=#5 and tmpLine, by={M}}]
    (M) circle (1pt) node[left] {};
  }
\path[name path=tmpLine] (\Px,\Py+1.5*\dy/5) -- ++(1,0);
    \draw [thick,    name intersections={of=#5 and tmpLine, by={M}}]
    (M) circle (2.5pt) node[left] {};
}
\newcommand{\ThreeCurveVexNorm}[5]{%

  \pgfmathsetmacro{\cst}{1/3}
  \pgfmathsetmacro{\Px}{#1}
  \pgfmathsetmacro{\Py}{#2}
  \pgfmathsetmacro{\dx}{#3}
  \pgfmathsetmacro{\dy}{#4}
  \pgfmathsetmacro{\Pangle}{atan(\dy/(\cst*\dx))-45}
  \pgfmathsetmacro{\Qangle}{atan(\cst*\dy/\dx)-120}
  
  \coordinate (P) at (\Px,\Py);
  \coordinate (Q) at (\Px+\dx,\Py+\dy);
  
  \draw[line width=1pt,color=black,
  name path=#5] (P) to[out=\Pangle,in=\Qangle] (Q);

  \foreach \i in {1,2,3}{%
    \pgfmathsetmacro{\My}{\Py+\i*\dy/4}
    \path[name path=tmpLine] (\Px,\My) -- ++(\dx+1,0);
    
    \draw [thick,fill=black,
    name intersections={of=#5 and tmpLine, by={M}}]
    (M) circle (1.5pt) node[left] {};
  }
}
\newcommand{\ThreeCurveVex}[5]{%

  \pgfmathsetmacro{\cst}{1/3}
  \pgfmathsetmacro{\Px}{#1}
  \pgfmathsetmacro{\Py}{#2}
  \pgfmathsetmacro{\dx}{#3}
  \pgfmathsetmacro{\dy}{#4}
  \pgfmathsetmacro{\Pangle}{atan(\dy/(\cst*\dx))-45}
  \pgfmathsetmacro{\Qangle}{atan(\cst*\dy/\dx)-120}
  
  \coordinate (P) at (\Px,\Py);
  \coordinate (Q) at (\Px+\dx,\Py+\dy);
  
  \draw[line width=1pt,color=black,
  name path=#5] (P) to[out=\Pangle,in=\Qangle] (Q);

  \foreach \i in {1,2,3}{%
    \pgfmathsetmacro{\My}{\Py+\i*\dy/4}
    \path[name path=tmpLine] (\Px,\My) -- ++(\dx+1,0);
    
    \draw [thick,fill=black,
    name intersections={of=#5 and tmpLine, by={M}}]
    (M) circle (1.5pt) node[left] {};
  }
}
\newcommand{\ZeroCurveVex}[5]{%

  \pgfmathsetmacro{\cst}{1/3}
  \pgfmathsetmacro{\Px}{#1}
  \pgfmathsetmacro{\Py}{#2}
  \pgfmathsetmacro{\dx}{#3}
  \pgfmathsetmacro{\dy}{#4}
  \pgfmathsetmacro{\Pangle}{atan(\dy/(\cst*\dx))-45}
  \pgfmathsetmacro{\Qangle}{atan(\cst*\dy/\dx)-120}
  
  \coordinate (P) at (\Px,\Py);
  \coordinate (Q) at (\Px+\dx,\Py+\dy);
  
  \draw[line width=1pt,color=black,
  name path=#5] (P) to[out=\Pangle,in=\Qangle] (Q);

    
}
\newcommand{\TwoCurveCave}[5]{%

  \pgfmathsetmacro{\cst}{1/3}
  \pgfmathsetmacro{\Px}{#1}
  \pgfmathsetmacro{\Py}{#2}
  \pgfmathsetmacro{\dx}{#3}
  \pgfmathsetmacro{\dy}{#4}
  \pgfmathsetmacro{\Pangle}{atan(\dy/(\cst*\dx))+50}
  \pgfmathsetmacro{\Qangle}{atan(\cst*\dy/\dx)+130}
  
  \coordinate (P) at (\Px,\Py);
  \coordinate (Q) at (\Px+\dx,\Py+\dy);
  
  \draw[line width=1pt,color=black,
  name path=#5] (P) to[out=\Pangle,in=\Qangle] (Q);

  \foreach \i in {1,2}{%
    \pgfmathsetmacro{\My}{\Py+\i*\dy/3}
    \path[name path=tmpLine] (\Px,\My) -- ++(\dx,0);
    
    \draw [thick,fill=black,
    name intersections={of=#5 and tmpLine, by={M}}]
    (M) circle (1.5pt) node[left] {};
  }
}
\newcommand{\ThreeCurveCave}[5]{%

  \pgfmathsetmacro{\cst}{1/3}
  \pgfmathsetmacro{\Px}{#1}
  \pgfmathsetmacro{\Py}{#2}
  \pgfmathsetmacro{\dx}{#3}
  \pgfmathsetmacro{\dy}{#4}
  \pgfmathsetmacro{\Pangle}{atan(\dy/(\cst*\dx))+50}
  \pgfmathsetmacro{\Qangle}{atan(\cst*\dy/\dx)+130}
  
  \coordinate (P) at (\Px,\Py);
  \coordinate (Q) at (\Px+\dx,\Py+\dy);
  
  \draw[line width=1pt,color=black,
  name path=#5] (P) to[out=\Pangle,in=\Qangle] (Q);

  \foreach \i in {1,2,3}{%
    \pgfmathsetmacro{\My}{\Py+\i*\dy/4}
    \path[name path=tmpLine] (\Px,\My) -- ++(\dx,0);
    
    \draw [thick,fill=black,
    name intersections={of=#5 and tmpLine, by={M}}]
    (M) circle (1.5pt) node[left] {};
  }
}
\newcommand{\TwoCurveCaveVert}[5]{%

  \pgfmathsetmacro{\cst}{1/3}
  \pgfmathsetmacro{\Px}{#1}
  \pgfmathsetmacro{\Py}{#2}
  \pgfmathsetmacro{\dx}{#3}
  \pgfmathsetmacro{\dy}{#4}
  \pgfmathsetmacro{\Pangle}{atan(\dy/(\cst*\dx))+50}
  \pgfmathsetmacro{\Qangle}{atan(\cst*\dy/\dx)+130}
  
  \coordinate (P) at (\Px,\Py);
  \coordinate (Q) at (\Px+\dx,\Py+\dy);
  
  \draw[line width=1pt,color=black,
  name path=#5] (P) to[out=\Pangle,in=\Qangle] (Q);

  \foreach \i in {1,2}{%
    \pgfmathsetmacro{\Mx}{\Px+\i*\dx/3}
    \path[name path=tmpLine] (\Mx,\Py) -- (\Mx,\Py+1);
    
    \draw [thick,fill=black,
    name intersections={of=#5 and tmpLine, by={M}}]
    (M) circle (1.5pt) node[left] {};
  }
}
\newcommand*{\personal}[1]{\textcolor{blue}{(Personal: #1)}}
\newcommand*{\todo}[1]{\textcolor{red}{(Todo: #1)}}
\newcommand*{\personal}[1]{}
\newcommand*{\todo}[1]{}
\newcommand{\cocor}{\mathbf{CoCorr}}
\newcommand{\dSt}{\mathbf{dSt}_{\mathbf{C}}}
\DeclareMathOperator{\End}{End}
\DeclareMathOperator{\NE}{NE}
\DeclareMathOperator{\cst}{cst}%
\DeclareMathOperator{\pt}{pt}%
\DeclareMathOperator{\Td}{Td}%
\DeclareMathOperator{\Hom}{Hom}%
\DeclareMathOperator{\Id}{id}%
\DeclareMathOperator{\Spec}{Spec}
\DeclareMathOperator{\Aut}{Aut}
\DeclareMathOperator{\vir}{vir}%
\DeclareMathOperator{\ch}{ch}%
\newtheorem{prop}[equation]{Proposition}
\newtheorem{cor}[equation]{Corollary}
\newtheorem{lem}[equation]{Lemma}
\newtheorem{defn}[equation]{Definition}
\newtheorem{thm}[equation]{Theorem}
\numberwithin{equation}{subsection}
\theoremstyle{remark}
\newtheorem{ex}[equation]{Example}
\newtheorem{remark}[equation]{Remark}
\newtheorem{example}[equation]{Example}
\newcommand{\Grps}{(\mathrm{Grps})}
\newcommand{\Aff}{(\mathrm{Aff-sch})}
\DeclareMathOperator{\pres}{pre}
\DeclareMathOperator{\fake}{fake}
\DeclareMathOperator{\Sym}{Sym}
\DeclareMathOperator{\DAG}{DAG}
\DeclareMathOperator{\POT}{POT}
\DeclareMathOperator{\ev}{ev}
\DeclareMathOperator{\colim}{\mathrm{colim}}
\DeclareMathOperator{\stab}{\mathrm{Stab}}
\DeclareMathOperator{\Map}{\mathrm{Map}}
\DeclareMathOperator{\Qcoh}{\mathrm{Qcoh}}
\DeclareMathOperator{\Perf}{\mathrm{Perf}}
\DeclareMathOperator{\corr}{cor}
\newcommand{\dst}{\mathrm{dst}_k}
\def\BarrBeckTriangle#1{\innerBarrBeckTriangle(#1)}
\def\innerBarrBeckTriangle(#1;#2;#3;#4;#5;#6) {$$\xymatrix{ #1  \ar[rr]^{#4}\ar[dr]_{#6}&&  \ar[dl]^{#5} #2  \\   &#3 &  }$$}
\begin{document}

\title{Gromov-Witten theory with derived algebraic geometry}
\author{Etienne Mann}

\address{Etienne Mann, Universit\'e d'Angers, Département de mathématiques
Bâtiment I
Faculté des Sciences
2 Boulevard Lavoisier
F-49045 Angers cedex 01
France }
\email{etienne.mann@univ-angers.fr }

\author{Marco Robalo}

\address{Marco Robalo, Sorbonne  Université. Université Pierre  et  Marie  Curie,
Institut Mathématiques de Jussieu Paris Rive Gauche, CNRS, Case 247, 4, place Jussieu,
75252 Paris Cedex 05, France }
\email{marco.robalo@imj-prg.fr}

  \thanks{E.M is supported by the grant of the Agence Nationale de la
    Recherche ``New symmetries on Gromov-Witten theories'' ANR- 09-JCJC-0104-01. and "SYmétrie
    miroir et SIngularités irrégulières provenant de la PHysique "ANR-13-IS01-0001-01/02 and project
  ``CatAG''ANR-17-CE40-0014}
    
  \thanks{M. R was supported by a Postdoctoral Fellowship of the Fondation Sciences Mathematiques de
    Paris and ANR ``CatAG''ANR-17-CE40-0014}

\begin{abstract}
In this survey we add two new results that are not in our paper \cite{2015arXiv150502964M}.
Using the idea of brane actions discovered by To\"en, we construct a lax associative action of the
operad of stable curves of genus zero on a smooth variety $X$ seen as an object in correspondences in derived stacks. This action encodes the Gromov-Witten theory of $X$ in purely geometrical terms.

\end{abstract}


\personal{
\begin{center}
LIST OF TODOs TEST
\end{center}
\vspace{1cm}
Solve  PROP \ref{prop-compatiblePerfCoh}\\
1) the fact these are maps of stacks and not derived schemes, which are not representable by derived schemes
2)Why proper quasi-smooth (why of finite presentation?) pullbacks preserve bounded coherent? Meaning, why are they of finite tor amplitude.\\
\vspace{1cm}
THM \ref{thmbranes}\\
1) Re-check the proof specially the verification it is a cocartesian fibration\\
\vspace{1cm}
H-descent: I think the proof is correct because we are only describing the lax structure and for this we only need the structure sheaf. As the maps  in the diagram are closed immersions, therefore proper, then they preserve bounded coherent.\\ 
1) 
\newpage
}

\maketitle
\tableofcontents


\section{Introduction}
\label{sec:introduction}


This paper is a survey\footnote{We add two new results Theorem \ref{thm:colim} and \ref{thm:orientation}} of \cite{2015arXiv150502964M}.  We explain without technical details the
ideas of \cite{2015arXiv150502964M} where we use derived algebraic geometry to redefine
Gromov-Witten invariants and highlight the hidden operad picture.

Gromov-Witten invariants were introduced by Kontsevich and Manin in algebraic geometry in \cite{KMgwqceg, MR1363062}. The foundations were then completed by Behrend,
Fantechi and Manin in \cite{Behrend-Manin-stack-stable-mapGWI-1996},
\cite{MR1437495} and \cite{MR1431140}. In symplectic
geometry, the definition is due to Y. Ruan and G. Tian in \cite{RTmqc}, \cite{MR1390655} and
\cite{MR1483992}.  Mathematicians developed several
techniques to compute them: via a localization
formula proved by Graber and Pandharipande in
\cite{MR1666787}, via a degeneration formula proved
by J. Li in \cite{MR1938113} and another one called quantum Lefschetz proved by Coates-Givental
\cite{Givental-Coates-2007-QRR} and Tseng \cite{tseng_orbifold_2010}.

 These invariants can be encoded using different
mathematical structures: quantum products, cohomological field theories (Kontsevich-Manin in
\cite{KMgwqceg}), Frobenius manifolds (Dubrovin in \cite{Dtft}), Lagrangian cones and Quantum
$D$-modules (Givental \cite{MR2115767}), variations of non-commutative Hodge structures (Iritani
\cite{Iritani-2009-Integral-structure-QH} and Kontsevich, Katzarkov and Pantev in
\cite{Katzarkov-Pantev-Kontsevich-ncVHS}) and so on, and used to express different aspects of mirror
symmetry. Another important aspect of the theory concerns the 
study of the functoriality of Gromov-Witten invariants via crepant resolutions or flop transitions in
terms of these structures (see \cite{MR2234886}, \cite{MR2360646}, \cite{CCIT-wall-crossingI},
\cite{CCIT-computingGW}, \cite{Bryan-Graber-2009}, \cite{MR2683208}, \cite{2013arXiv1309.4438B},
\cite{2014arXiv1407.2571B}, \cite{2014arXiv1410.0024C}, etc).\\

We first recall the classical construction of these invariants. Let $X$ be a smooth projective
variety (or orbifold). The basic ingredient to define GW-invariants is the moduli stack of stable
maps to $X$, denoted by $\overline{\mathcal{M}}_{g,n}(X,\beta)$, with a fixed degree
$\beta \in H_{2}(X,\mathbb{Z})$ \footnote{The (co)homology in this paper are the singular
  ones.}. The evaluation at the marked points gives maps of stacks
$\ev_i :\overline{\mathcal{M}}_{g,n}(X,\beta) \to X$ and forgetting the morphism and stabilising the
curve gives a map $p:\overline{\mathcal{M}}_{g,n}(X,\beta) \to \overline{\mathcal{M}}_{g,n}$ (See
Remark \ref{rem:2,morphisms}).

To construct the invariants, we integrate over ``the fundamental class'' of the moduli stack
$\overline{\mathcal{M}}_{g,n}(X,\beta)$. For this integration to be possible, we need this moduli
stack to be proper, which was proved by Behrend-Manin \cite{Behrend-Manin-stack-stable-mapGWI-1996}
and some form of smoothness. In general, the stack $\overline{\mathcal{M}}_{g,n}(X,\beta)$ is not
smooth and has many components with different dimensions. Nevertheless and thanks to a theorem of
Kontsevich \cite{MR1363062}, it is quasi-smooth - in the sense that locally it looks like the
intersection of two smooth sub-schemes inside an ambient smooth scheme. In genus zero however this
stack is known to be smooth under some assumptions on the geometry of $X$, for instance, when $X$ is
the projective space or a Grassmaniann, or more generally when $X$ is convex, \textit{i.e.}, if for
any map $f:\mathbb{P}^1\to X$, the group $\mathrm{H}^1(\mathbb{P}^1, f^*(\mathrm{T}_X))$
vanishes. See \cite{MR1492534}.

This quasi-smoothness has been used by Behrend-Fantechi to define in \cite{MR1437495} a
``virtual fundamental class'', denoted by $[\overline{\mathcal{M}}_{g,n}(X,\beta)]^{\vir}$, which is a cycle in the Chow ring of
$\overline{\mathcal{M}}_{g,n}(X,\beta)$ that plays the role of the usual fundamental class.

One of the most important result of Gromov-Witten invariants is that they form a cohomological field
theory, that is, there exist a family of morphisms 
\begin{align}
  \label{eq:2}
 I_{g,n,\beta}^{X}: H^{*}(X)^{\otimes n}& \to H^{*}(\overline{\mathcal{M}}_{g,n}) \\
(\alpha_{1}\otimes\ldots \otimes \alpha_{n}) &\mapsto
\mathrm{Stb}_{*}\left(\left[\overline{\mathcal{M}}_{g,n}(X,\beta)\right]^{\vir}\cup (\cup_i ev^{*}_i(\alpha_{i})) \nonumber
\right)
\end{align}
that satisfy some properties. Another formulation of this result is that we have a morphism of
operads between $\left(H_{*}(\overline{\mathcal{M}}_{g,n})\right)_{n\in \mathbb{N}}$ and the
endomorphism operad $\End(H^{*}(X))$ (see Corollary \ref{cor:alg,coho}). Yet a more concise way to
explain this, is to say that $H^{*}(X)$ owns a structure of algebra over the operads
$H_{*}(\overline{\mathcal{M}}_{g,n})$.

The main result of \cite{2015arXiv150502964M} is that it is possible to remove (co)homology from the previous statement.
The main result of \cite{2015arXiv150502964M} is the following
\begin{thm}[See Theorem \ref{thm,main}]
  Let $X$ be a smooth projective variety.

 The diagrams \begin{align*}
  \xymatrix{&\coprod_{\beta}\mathbb{R}\overline{\mathcal{M}}_{0,n+1}(X,\beta) \ar[ld]_{p,e_{1}, \ldots ,e_{n}}
              \ar[rd]^{p,e_{n+1}} & \\ \overline{\mathcal{M}}_{0,n+1}\times X^{n} &&
    \overline{\mathcal{M}}_{0,n+1}\times X
}
\end{align*}
 give a family of morphisms
   \begin{align*}
    \varphi_{n}:\overline{\mathcal{M}}_{0,n+1}  \to \underline{\End}^{\corr}(X)[n]:=\underline{\Hom}^{\corr}(X^{n},X)
  \end{align*}
that forms a lax morphism of $\infty$-operads in the category of derived stacks.
\end{thm}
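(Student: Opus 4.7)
My plan is to invoke Toën's general brane action theorem, adapted to the setting of derived stacks, and then verify that the specific geometric diagrams displayed in the statement arise from this abstract construction.

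First, I would produce the $\infty$-operad structure on the collection $(\overline{\mathcal{M}}_{0,n+1})_n$ in $\dSt$, where the distinguished $(n+1)$-st marked point plays the role of the ``output'' and the operadic composition $\overline{\mathcal{M}}_{0,m+1}\times\overline{\mathcal{M}}_{0,n+1}\to \overline{\mathcal{M}}_{0,m+n}$ is given by the gluing of the output of the first curve to an input of the second. This should be packaged as a coCartesian fibration over $\Fin$.

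Second, for a smooth projective $X$, I would realize $\coprod_\beta \mathbb{R}\overline{\mathcal{M}}_{0,n+1}(X,\beta)$ as the derived ``module'' over this operad encoding rational stable maps to $X$. The evaluations $(e_1,\dots,e_n)$ together with the stabilization $p$ give the input leg of a correspondence out of $\overline{\mathcal{M}}_{0,n+1}\times X^n$, while $(e_{n+1},p)$ gives the output leg landing in $\overline{\mathcal{M}}_{0,n+1}\times X$. Using the internal Hom in $\cordSt$, each such correspondence is adjoint to a morphism $\varphi_n:\overline{\mathcal{M}}_{0,n+1}\to \underline{\Hom}^{\corr}(X^n,X)$, and these must be assembled into a functor over $\Fin$ between the relevant $\infty$-operads.

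Third, to check that $\varphi_\bullet$ is a \emph{lax} morphism of $\infty$-operads, the essential point is the behaviour of the gluing maps on derived moduli of stable maps. Given two derived stable maps, gluing their domains at a marked point sent to the same target point defines a morphism
\[
\mathbb{R}\overline{\mathcal{M}}_{0,m+1}(X,\beta_1)\times_X \mathbb{R}\overline{\mathcal{M}}_{0,n+1}(X,\beta_2)\longrightarrow \mathbb{R}\overline{\mathcal{M}}_{0,m+n}(X,\beta_1+\beta_2),
\]
which is not an equivalence but identifies the source with the derived boundary stratum of nodal curves. The required $2$-cell expressing laxness of the composition $\varphi_m\circ\varphi_n$ versus $\varphi_{m+n}$ is precisely this non-invertible morphism of correspondences, and this is the step that \emph{requires} derived geometry: only at the derived level is this gluing map well-defined as a map of correspondences without artificial use of a virtual fundamental class.

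The main obstacle, and the technical heart of the argument, is verifying the coherences of these $2$-cells to all orders: one must show that the $\varphi_n$'s assemble into a genuine functor of $\infty$-operads, i.e.\ a map of coCartesian fibrations over $\Fin$, rather than merely a collection of maps with pointwise compatibilities. This is exactly the content of Toën's abstract brane action theorem, and the proof amounts to checking that the operad of stable curves satisfies the hypotheses (properness, local finite presentation, and the geometric compatibility of gluing with forgetful/stabilization morphisms) needed to apply it inside $\cordSt$.
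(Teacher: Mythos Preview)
Your broad strategy --- invoke To\"en's brane action and then identify the resulting correspondences with the evaluation/stabilization diagrams --- is the right shape, but there is a genuine missing idea that makes the argument as written fail.

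You attempt to apply the brane action directly to the operad $(\overline{\mathcal{M}}_{0,n+1})_n$ and then ``realize $\coprod_\beta \mathbb{R}\overline{\mathcal{M}}_{0,n+1}(X,\beta)$ as the derived module over this operad''. But the brane action produces correspondences by applying $\mathbb{R}\Hom(-,X)$ to the universal curve $\mathcal{O}(n+1)\to\mathcal{O}(n)$, and the point is that $\mathbb{R}\overline{\mathcal{M}}_{0,n+1}(X,\beta)$ is \emph{not} a mapping stack out of the universal curve over $\overline{\mathcal{M}}_{0,n+1}$: the source curve of a stable map need not be a stable curve, and the forgetful map $p$ to $\overline{\mathcal{M}}_{0,n+1}$ involves contraction. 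So the machine, run on $\overline{\mathcal{M}}_{0,n+1}$, does not output the diagrams in the statement.

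The paper fixes this by passing through Costello's graded operad $\mathcal{O}_\beta(n)=\mathfrak{M}_{0,n+1,\beta}$. Its forgetful map $\mathfrak{M}_{0,n+2,\beta}\to\mathfrak{M}_{0,n+1,\beta}$ \emph{is} the universal curve with no contraction, and the derived enhancement $\mathbb{R}\overline{\mathcal{M}}_{0,n+1}(X,\beta)$ is defined precisely as an open in the relative $\mathbb{R}\Hom$ over $\mathfrak{M}_{0,n+1,\beta}$. Running the brane action here yields a \emph{strong} (not lax) $\mathfrak{M}^\otimes$-algebra structure on $X$: the gluing morphism $\widetilde{\alpha}$ in \eqref{eq:31} is an equivalence because over $\mathfrak{M}$ there is a unique way to cut a glued curve. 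The laxness in the final statement then enters only at the last step, by pushing forward along the stabilization morphism of operads $\coprod_\beta\mathfrak{M}_{0,n+1,\beta}\to\overline{\mathcal{M}}_{0,n+1}$, which is itself only lax. This is also the conceptual source of the ``trees of $\mathbb{P}^1$'s in the middle'' that you allude to; your description of the $2$-cell as the inclusion of a boundary stratum is morally right, but without the factorization through $\mathfrak{M}$ you have no mechanism producing it, nor the coherences you correctly flag as the technical heart.

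There are further technical wrinkles the paper records (the brane theorem is stated for operads in spaces, the conditions $\mathcal{O}(0)=\mathcal{O}(1)=\pt$ fail for $\mathfrak{M}$ and must be imposed by hand, and $\mathfrak{M}$ is not literally coherent since pushouts of schemes along closed immersions are not derived pushouts), but these are repairable; the essential gap in your outline is the absence of Costello's moduli space.
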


We restrict our work to genus $0$ because we lack fundamental aspects for $\infty$-modular operads.

In this survey we omit the technical details and we insist on the ideas behind the
theorem. Nevertheless, we add some new statements with respect to \cite{2015arXiv150502964M} as Theorem
\ref{thm:colim} and Theorem \ref{thm:orientation} with the proofs given in the appendices.

\textbf{Acknowledgements:} We want to thank Bertrand To\"en for the organisation of the \'Etat de la
Recherche and also for some ideas to prove Theorem
\ref{thm:colim}. The first author thanks Daniel Naie who explains how to make these figures.

\section{Moduli space of stable maps, cohomological field theory and operads}
\label{sec:stable-maps}

In this section, we recall some notions and ideas related to Gromov-Witten theory. Most of them are in the book
of Cox-Katz \cite{Cox-Katz-Mirror-Symmetry}. The mathematical story started with the paper of
Kontsevich \cite{MR1363062} (see also Kontsevich-Manin \cite{MR1369420}) and was followed by many
more and interesting questions that we will skip here.

\subsection{Moduli space of stable maps}
\label{sec:moduli-space-stable-1}

Let $X$ be a smooth projective variety over $\mathbb{C}$. Let $\beta\in H_{2}(X,\mathbb{Z})$. Let
$g,n \in \mathbb{N}$.  Denote by $\Aff$ the category of affine scheme and by $\Grps$ the category of
groupoids.  We define the moduli space of stable maps by the following functor:
\begin{align*}
  \overline{\mathcal{M}}_{g,n}(X,\beta) : \Aff^{op}\longrightarrow & \Grps
\end{align*}
where $\overline{\mathcal{M}}_{g,n}(X,\beta)(S)$ is the following groupoids.  Objects are flat
proper morphisms $\pi:\mathcal{C}\to S$ together with $n$-sections $\sigma_{i}:S \to \mathcal{C}$
and a morphism $f:\mathcal{C}\to X$ such that for any geometric point $s \in S$, we have
\begin{enumerate}
\item the fiber $\mathcal{C}_{s}$ is a connected nodal curve of genus $g$ with $n$ distinct marked
  points which live on the smooth locus of $\mathcal{C}_{s}$.
\item  $f_{s}:\mathcal{C}_{s}\to X$ is of degree $\beta$, meaning $f_{*}[\mathcal{C}_{s}]=\beta$.
\item\label{item:stab} the automorphism group of $\Aut(\mathcal{C},\underline{\sigma},f)$ is finite
  where we denote $\underline{\sigma}=(\sigma_{1}, \ldots ,\sigma_{n})$. This condition is called
  \textit{stability} condition.
\end{enumerate}

For any affine scheme $S$, the morphism in the groupoid $\overline{\mathcal{M}}_{g,n}(X,\beta)(S)$
ar the isomorphisms $\varphi:\mathcal{C}\to \mathcal{C'}$ such that the following diagram is
commutative:
\begin{displaymath}
  \xymatrix{\mathcal{C} \ar[rdd]_-{\pi} \ar[rr]^-{\varphi}_{\sim} \ar[rd]^-{f} && \mathcal{C'} \ar[ldd]^-{\pi'}
    \ar[ld]_-{f'} \\ & X& \\ &S \ar@/^16pt/[uul]^{\sigma_{i}} \ar@/_16pt/[uur]_{\sigma'_{i}}&}
\end{displaymath}

Let $\varphi:S\to S'$ be a morphism of affine schemes. Let $(\mathcal{C}\to S,\underline{\sigma},f)$
be an object in $\overline{\mathcal{M}}_{g,n}(X,\beta)(S)$, then the pullback family defined by the
diagram below satisfies the three conditions above that is it is in
$\overline{\mathcal{M}}_{g,n}(X,\beta)(S')$
\begin{displaymath}
  \xymatrix{\mathcal{C'}\times_{S'}S \ar[dd] \ar[rr]^{\widetilde{\varphi}}
    \ar[rd]^-{\widetilde{\varphi}\circ f'} && \mathcal{C'} \ar[dd]^-{\pi'}
    \ar[ld]_-{f'} \\ & X& \\ S \ar@/^16pt/[uu]^{\varphi^{*}\sigma_{i}'}\ar[rr]^{{\varphi}}&& S' \ar@/_16pt/[uu]_{\sigma'_{i}}}
\end{displaymath}
Notice that the condition $(1), (2)$ a,d $(3)$ are stable by pull-back.

\begin{remark}
  Let explain the stability condition \eqref{item:stab} in more concrete terms (See \cite[\S 7.1.1
  p. 169]{Cox-Katz-Mirror-Symmetry}). Denote by $\mathcal{C}_{s,i}$ the irreducible components of
  $\mathcal{C}_{s}$ and by $f_{s,i}:\mathcal{C}_{s,i}\to X$ the restrictions of the morphism. Denote
  by $\beta_{i}=(f_{s,i})_{*}[\mathcal{C}_{s,i}]\in H_{2}(X,\mathbb{Z})$ the degree of $f_{s}$ on
  each irreducible component $\mathcal{C}_{s,i}$. On the irreducible component $\mathcal{C}_{s,i}$,
  a point is called \textit{special} if it is a nodal point or a marked point. The stability
  condition \eqref{item:stab} is equivalent to the following condition on each irreducible component
  : if $\beta_{i}=0$ and the genus of $\mathcal{C}_{s,i}$ is $0$ (resp. $1$) then
  $\mathcal{C}_{s,i}$ should have at least $3$ (resp. $1$) special points. So for example if
  $\beta_{i}\neq 0$ or the genus is greater than $2$ there is no condition on $\mathcal{C}_{s,i}$.
\end{remark}

In this text, we will never use the coarse moduli space of $\overline{\mathcal{M}}_{g,n}(X,\beta)$,
so all the morphisms that we will use are morphisms of stacks.

 \begin{ex}\label{sec:moduli-space-stable}
   Let us give an example in genus $0$ (see Figure \ref{fig:stab,maps}). Consider the following
   stable map in $\overline{\mathcal{M}}_{0,5}(X,\beta)$. All the $C_{i}$ are isomorphic to
   $\mathbb{P}^{1}$. The stability condition on this stable map imposes only that $\beta_{2}\neq 0$
   because $C_{2}$ has only $2$ special points.

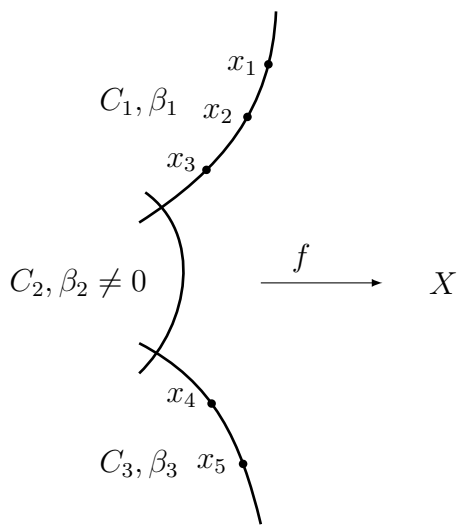
\begin{figure}[ht]
  \centering
  \begin{tikzpicture}[scale=0.8]
    \ThreeCurveVex{0}{0}{2.25}{3.5}{firstCurve} \ZeroCurveVex{0}{-2.5}{.1}{3}{secondCurve}
    \TwoCurveCave{0}{-2}{2}{-3}{thirdCurve} \draw (0,2) node {$C_{1},\beta_{1}$}; \draw (-1,-1) node
    {$C_{2},\beta_{2}\neq 0$}; \draw (0,-4) node {$C_{3},\beta_{3}$}; \draw [>=latex,->,black]
    (2,-1)-- (4,-1) node [above left, midway] {$f$}; \draw (5,-1) node {$X$}; \draw (1.7,2.6) node
    {$x_{1}$}; \draw (1.3,1.8) node {$x_{2}$}; \draw (0.7,1) node {$x_{3}$}; \draw (0.7,-2.9) node
    {$x_{4}$}; \draw (1.2,-4) node {$x_{5}$};
  \end{tikzpicture}
  \caption{Example of a stable map} \label{fig:stab,maps}
\end{figure}

\end{ex}

In particular, the moduli space of stable curve, denoted by $\overline{\mathcal{M}}_{g,n}$ is
$\overline{\mathcal{M}}_{g,n}(\pt,\beta=0)$. Notice that for $(g,n)\in\{(0,0),(0,1),(0,2),(1,0)\}$
the moduli space $\overline{\mathcal{M}}_{g,n}$ is empty.

\begin{remark}\label{rem:2,morphisms}
  There are two kinds of natural morphisms of stacks from the moduli space of stable maps.
  \begin{enumerate}
  \item For any $i\in\{1, \ldots ,n\}$, the evaluation morphism
    $e_{i}:\overline{\mathcal{M}}_{g,n}(X,\beta) \to X$ is the evaluation at the $i$-th marked point
    i.e., it sends the geometric point $(C,x_{1}, \ldots ,x_{n},f)$ to $f(x_{i})$.
  \item When $\overline{\mathcal{M}}_{g,n}$ is not empty, we define the morphism of stacks
    $p:\overline{\mathcal{M}}_{g,n}(X,\beta)\to \overline{\mathcal{M}}_{g,n}$ that forgets the map
    and stabilises the curve that is it sends $(C,x_{1}, \ldots ,x_{n},f)$ to
    $(C^{\stab},x_{1}, \ldots ,x_{n})$ where $C^{\stab}$ is obtained from $C$ by contracting all the
    unstable components (see \cite{Knudsen-moduli-stable-curves-II-1983} for the techniques). On the
    stable map of the example \ref{sec:moduli-space-stable}, forgetting the map $f$, the irreducible
    component $C_{2}$ become unstable (because it has only $2$ special points). So the image by $p$
    is the following stable curve (see Figure \ref{fig:stab}).

\begin{figure}[ht]
  \centering
  \begin{tikzpicture}[scale=0.7]
    \ThreeCurveVex{0}{0}{2.25}{3.5}{firstCurve} \TwoCurveCave{0}{0.5}{2}{-3}{thirdCurve} \draw
    (0.2,2) node {$C_{1}$}; \draw (0.1,-1.3) node {$C_{3}$};

    \draw (1.7,2.6) node {$x_{1}$}; \draw (1.3,1.8) node {$x_{2}$}; \draw (0.7,1) node {$x_{3}$};
    \draw (0.7,-0.7) node {$x_{4}$}; \draw (1.2,-1.7) node {$x_{5}$};
  \end{tikzpicture}
  \caption{The stabilisation of the stable maps of Figure \ref{fig:stab,maps}} \label{fig:stab}
\end{figure}
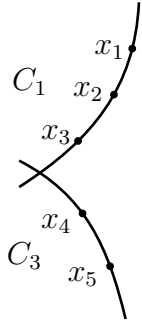
\end{enumerate}
\end{remark}

\begin{thm}[Deligne-Mumford \cite{MR0262240}, Kontsevich-Manin \cite{MR1369420}, Behrend-Fantechi
  \cite{MR1437495}]
  \
  \begin{enumerate}
  \item The moduli space $\overline{\mathcal{M}}_{g,n}$ is a proper smooth Deligne-Mumford stack of
    dimension $3g-3+n$.
  \item The moduli space $\overline{\mathcal{M}}_{g,n}(X,\beta)$ is a proper (not smooth in general)
    Deligne-Mumford stack. It has an expected dimension (see remark below for the meaning) which is
    \begin{displaymath}
      \int_{\beta}c_{1}(TX) +(1-g)\dim X +3g-3+n
    \end{displaymath}
  \item There exists a class, denoted by $[\overline{\mathcal{M}}_{g,n}(X,\beta)]^{\vir}$, in the
    Chow ring $A_{*}(\overline{\mathcal{M}}_{g,n}(X,\beta))$ of degree equal to the expected
    dimension of $\overline{\mathcal{M}}_{g,n}(X,\beta)$ which satisfies some functorial properties.
  \end{enumerate}
\end{thm}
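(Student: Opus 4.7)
The plan is to treat the three assertions separately, as each requires a different technology, and then to identify what I expect to be the real obstacle.

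For (1), I would follow the Deligne--Mumford strategy. First, I would exhibit $\overline{\mathcal{M}}_{g,n}$ as a stack by checking descent for families of stable pointed curves, using the fact that stability (finiteness of the automorphism group) forces the relative log canonical sheaf $\omega_{\mathcal{C}/S}(\sum \sigma_i)^{\otimes 3}$ to be relatively very ample, which yields a canonical projective embedding of any family and hence effective descent. Finiteness of automorphisms combined with standard representability criteria gives the Deligne--Mumford property. For smoothness and dimension, I would use deformation theory: at a stable nodal pointed curve $(C,\underline{x})$ the tangent space is $\mathrm{Ext}^1(\Omega_C(\sum x_i),\mathcal{O}_C)$ and obstructions lie in $\mathrm{Ext}^2$, which vanishes because $C$ is a curve. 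A local-to-global Ext spectral sequence together with Riemann--Roch and unobstructedness of smoothing nodes gives the expected dimension $3g-3+n$. Properness follows from the stable reduction theorem applied to one-parameter families over a DVR.

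For (2), I would follow the Kontsevich/Behrend--Manin approach. Separatedness and the valuative criterion reduce, as in (1), to an extension problem for maps from a punctured family of curves to $X$, which is solved by taking the graph closure in $\mathcal{C}\times X$ and stabilizing the resulting prestable curve; the degree bound $\beta$ ensures boundedness and hence finite type, via an embedding $X\hookrightarrow \mathbb{P}^N$ and boundedness of the Hilbert scheme of curves of fixed arithmetic genus and degree. The expected dimension is computed by the deformation theory of maps relative to $\mathfrak{M}_{g,n}$: the tangent--obstruction two-term complex involves $R\pi_*(f^*T_X)$ together with the contribution $\mathrm{Ext}^\bullet(\Omega_{\mathcal{C}/S}(\sum \sigma_i),\mathcal{O})$ from the source moduli. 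Riemann--Roch on a genus $g$ curve gives $\chi(f^*T_X)=\int_\beta c_1(T_X)+(1-g)\dim X$, and adding the $3g-3+n$ from the source moduli produces the stated formula.

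For (3), I would invoke the Behrend--Fantechi machinery. The key input is a perfect obstruction theory of amplitude $[-1,0]$ on $\overline{\mathcal{M}}_{g,n}(X,\beta)$, provided by a morphism $E^\bullet:=(R\pi_*f^*T_X)^\vee \to \mathbb{L}_{\overline{\mathcal{M}}_{g,n}(X,\beta)/\mathfrak{M}_{g,n}}$ coming from the dual of the Kodaira--Spencer map of the universal stable map. The virtual fundamental class is then cut out of the intrinsic normal cone $\mathfrak{C}_{\overline{\mathcal{M}}_{g,n}(X,\beta)/\mathfrak{M}_{g,n}}$ by the Gysin pullback along the zero section of the vector bundle stack $h^1/h^0((E^\bullet)^\vee)$. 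The functorial properties (mapping to a point, splitting, forgetting a tail, and so on) follow from compatibilities of obstruction theories under the corresponding gluing and forgetting morphisms between moduli spaces.

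The main obstacle, in my view, is the verification in (3) that $(R\pi_*f^*T_X)^\vee$ genuinely defines a perfect obstruction theory relative to $\mathfrak{M}_{g,n}$. Perfectness of amplitude $[-1,0]$ relies on $R^i\pi_*(f^*T_X)=0$ for $i>1$, which holds because $\pi$ has relative dimension one; what is substantially harder is showing that the constructed map into $\mathbb{L}_{\overline{\mathcal{M}}_{g,n}(X,\beta)/\mathfrak{M}_{g,n}}$ satisfies the Behrend--Fantechi axioms (an isomorphism on $h^0$ and a surjection on $h^{-1}$). Once this compatibility is established, the remaining machinery -- existence of the intrinsic normal cone, its embedding into the bundle stack, and the Gysin map -- is essentially formal.
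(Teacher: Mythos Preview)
The paper does not prove this theorem; it is stated as a classical result with citations to Deligne--Mumford, Kontsevich--Manin, and Behrend--Fantechi, and the only argument the paper supplies is the Remark following the statement, which sketches the expected-dimension computation in (2) via deformation theory and Hirzebruch--Riemann--Roch (splitting the deformation into that of the curve, contributing $3g-3+n$, and that of the map, contributing $\chi(C,f^*TX)=\int_\beta c_1(TX)+(1-g)\dim X$). Your proposal is considerably more detailed than anything in the paper and is a faithful outline of how the cited references actually proceed: tricanonical embedding and $\mathrm{Ext}^2$-vanishing for (1), graph closure plus Hilbert-scheme boundedness and relative deformation theory for (2), and the intrinsic normal cone/Gysin construction for (3). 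Your identification of the technical heart---verifying that $(R\pi_*f^*T_X)^\vee$ really is a relative perfect obstruction theory---is accurate, and the paper later echoes this viewpoint (see \S\ref{sec:comp-theor-two}) by noting that $j^*\mathbb{L}_{\mathbb{R}\mathcal{M}}\to\mathbb{L}_{\mathcal{M}}$ furnishes exactly this obstruction theory from the derived enhancement.
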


\begin{remark}
  \begin{enumerate}
  \item To use standard tools of intersection on the moduli space of stable maps we need this moduli
    space to be proper and smooth. The smoothness would give us the existence of a well-defined
    fundamental class. Nevertheless, the moduli space of stable maps
    $\overline{\mathcal{M}}_{g,n}(X,\beta)$, which is not smooth in general, could have different
    irreducible components of different dimensions with some very bad singularities. So the problem
    is to define an ersatz of a fundamental class. This was done by Behrend-Fantechi in
    \cite{MR1437495} where they defined the \textit{virtual fundamental class} (see
    \S~\ref{sec:comp-theor-two}).
  \item In some very specific case the moduli space of maps is smooth : for example only in genus
    $0$ for homogeneous variety like $\mathbb{P}^{n}$, grassmannian or flag varieties. In these
    cases, the virtual dimension is the actual dimension and the virtual fundamental class is the
    fundamental class.
  \item The computation of the expected dimension comes from deformation theory. Namely, a
    deformation of a stable maps turns to be a deformation of the underlying curve plus a
    deformation of the map.  As $\overline{\mathcal{M}}_{g,n}$ is smooth, the deformation functor of
    the curve has no obstruction and the tangent space has the dimension of
    $\overline{\mathcal{M}}_{g,n}$ which is $3g-3+n$. For the maps, the deformation functor has a
    non zero obstruction. More precisely, at a point
    $(C,\underline{x},f) \in \overline{\mathcal{M}}_{g,n}(X,\beta)$, the tangent space is
    $H^{0}(C,f^{*}TX)$ and an obstruction is $H^{1}(C,f^{*}TX)$. Making this in family, one gets two
    quasi coherent sheaves that are not vector bundles. Nevertheless the Euler characteristic can be
    computed via the Hirzebruch-Riemann-Roch theorem:
    \begin{displaymath}
      \chi(C,f^{*}TX)=\dim H^{0}(C,f^{*}TX)-\dim H^{1}(C,f^{*}TX)=\int_{C}\Td(TC)\ch(f^{*}TX)
    \end{displaymath}
    is constant and equals to $\int_{\beta}c_{1}(TX) +(1-g)\dim X$.
  \end{enumerate}
\end{remark}

We will now introduce another moduli space which was introduce by Costello \cite{MR2247968} and
which will play a crucial role latter. Let $\NE(X)$ be the subset of $H_{2}(X,\mathbb{Z})$ of
classes given by the image of a curve i.e. the subset of all $f_{*}[C]$ for any morphism $f:C\to X$.
Let define $\mathfrak{M}_{g,n\beta}$ as the moduli space of nodal curve of genus $g$ with $n$ marked
smooth points where each irreducible component $C_{i}$ has a labelled $\beta_{i}$ (notice that this
$\beta_{i}$ is not the degree of a map because there is no map from $C\to X$, it is just a
labbeled. At the end of the day, it will be related to the degree of a map but not here) such that
\begin{itemize}
\item $\sum_{i}\beta_{i}=\beta$
\item if $\beta_{i}=0$ then $C_{i}$ is stable i.e., if $C_{i}$ is of genus 0 then it has at least
  $3$ special points and if the genus is $1$ then it has at least $1$ special point.
\end{itemize}

We have a natural morphism of stacks $p:\mathfrak{M}_{g,n+1,\beta}\to \mathfrak{M}_{g,n,\beta}$
which forgets the $(n+1)-th$ marked point and contracts the irreducible components that are not
stable.

\begin{thm}[\cite{MR2247968}]\label{thm,Costello}
  \begin{enumerate}
  \item The stack $\mathfrak{M}_{g,n,\beta}$ is a smooth Artin stack.
  \item The morphism $p:\mathfrak{M}_{g,n+1,\beta}\to \mathfrak{M}_{g,n,\beta}$ is the universal
    curve.
  \end{enumerate}
\end{thm}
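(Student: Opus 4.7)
The plan is to deduce both statements from the analogous well-known fact for the stack $\mathfrak{M}^{\mathrm{pre}}_{g,n}$ of prestable (nodal, $n$-pointed) curves of genus $g$ with no stability condition imposed: it is a smooth Artin stack, obtained via Knudsen's clutching and pinching constructions. Over this I would build $\mathfrak{M}_{g,n,\beta}$ in two steps: first by adding the discrete labelling data $\{\beta_v\}$ on irreducible components, and then by imposing the (open) partial-stability condition at components with $\beta_v = 0$.

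Concretely, I would introduce a forgetful morphism $q: \mathfrak{M}_{g,n,\beta} \to \mathfrak{M}^{\mathrm{pre}}_{g,n}$ and argue that it is representable and étale. Étaleness is seen locally: after étale base change on $\mathfrak{M}^{\mathrm{pre}}_{g,n}$ the dual graph of the universal curve may be trivialized, whereupon a labelling $(\beta_v)_v \in \NE(X)^V$ with $\sum_v \beta_v = \beta$ is a finite discrete datum. Compatibility of labellings across boundary strata is forced by local constancy of $\sum_v \beta_v$ in families, which dictates that smoothing a node adds the two adjacent labels. Partial stability at $\beta_v = 0$ components is an open condition by the usual automorphism-group arguments, so $\mathfrak{M}_{g,n,\beta}$ is obtained as the corresponding open substack. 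Smoothness and algebraicity then follow from those of $\mathfrak{M}^{\mathrm{pre}}_{g,n}$ combined with the representable étaleness of $q$.

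For part (2), I would adapt Knudsen's classical identification of $\overline{\mathcal{M}}_{g,n+1}$ with the universal curve over $\overline{\mathcal{M}}_{g,n}$. Given $(C/S, \underline{\sigma}, \{\beta_v\}) \in \mathfrak{M}_{g,n,\beta}(S)$, one would like to use the tautological diagonal section of $C \times_S C \to C$ as a new $(n+1)$-th marking. When this section meets an existing $\sigma_i$ or passes through a node, I would apply Knudsen's local blow-up to insert a rational bubble carrying the new marking, and assign to this bubble the label $\beta_v = 0$; since the bubble automatically acquires three special points, the partial stability condition is met. This yields a classifying morphism $C \to \mathfrak{M}_{g,n+1,\beta}$ over $\mathfrak{M}_{g,n,\beta}$; conversely, $p$ contracts exactly the Knudsen bubbles, and the two constructions are mutually inverse, identifying $\mathfrak{M}_{g,n+1,\beta}$ with the universal curve of the family $p$.

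The main obstacle is the coherent bookkeeping of labelling data in part (1): the rule \emph{smoothing a node sums its two adjacent labels} has to mesh consistently with the converse \emph{degenerating splits a label compatibly}, particularly at codimension-two strata of $\mathfrak{M}^{\mathrm{pre}}_{g,n}$ where two smoothings occur simultaneously. Once this cocycle-type compatibility is settled the étale-local picture globalises. In part (2) the delicate point is verifying that Knudsen's blow-up behaves well in arbitrary families, so that the inverse of $p$ is genuinely a morphism of stacks rather than merely a pointwise construction.
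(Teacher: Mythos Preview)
The paper does not give its own proof of this theorem: it is stated as a result of Costello with the citation \cite{MR2247968} and no argument is supplied. So there is nothing in the paper to compare your proposal against directly.

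That said, your strategy is the expected one and aligns with what the paper implicitly relies on elsewhere: later (in the remark following the construction of the derived enhancement) the authors invoke precisely the fact that the forgetful morphism $\mathfrak{M}_{g,n,\beta}\to \mathfrak{M}^{\mathrm{pre}}_{g,n}$ is \'etale, citing Costello. Your plan to exhibit this map as representable \'etale by \'etale-locally trivializing the dual graph and recording the discrete labelling data, together with the open nature of the partial stability condition, is the standard route and is essentially how Costello proceeds. Likewise, for part~(2) your adaptation of Knudsen's contraction/clutching argument, with the convention that the inserted rational bubble receives label $0$, is exactly the mechanism behind the identification of $p$ with the universal curve; the paper's own heuristic explanation of the universal curve just after the theorem (the pictures describing $\varphi$ on smooth points, marked points, and nodes) matches your description.

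The caveats you flag are the genuine ones. The compatibility of labellings across strata---that smoothing a node sums the adjacent labels and that this is consistent at higher-codimension strata---is the content that makes the \'etale-local descriptions glue, and needs to be checked carefully (Costello handles this via the combinatorics of weighted dual graphs). For part~(2), Knudsen's stabilization/contraction is already set up to work in families, so the functoriality you worry about is available once one invokes his results; the only new input is the label assignment on the bubble, which is forced.
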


\begin{remark}
  \begin{enumerate}
  \item Notice that forgetting the last marked point and contracting the unstable component gives a
    morphism $\overline{\mathcal{M}}_{g,n+1} \to \overline{\mathcal{M}}_{g,n}$ which is also the
    universal curve (See \cite{Knudsen-moduli-stable-curves-II-1983}).
  \item The Artin stack of prestable\footnote{where we do not ask any stability condition on
      irreducible components see \cite[p.179]{Cox-Katz-Mirror-Symmetry}.} curves, denoted by
    $\mathfrak{M}^{pre}_{g,n}$ also have a universal curve which is not
    $\mathfrak{M}^{\pres}_{g,n+1}$. As there is no stability condition on the moduli space of
    prestable curves, forgetting a marked point never contract a rational curve. So forgetting a
    marked point $\mathfrak{M}^{\pres}_{g,n+1}\to \mathfrak{M}^{\pres}_{g,n}$ is not the universal
    curve. 
  \end{enumerate}
\end{remark}

Let us explain the meaning of being an universal curve of $\mathfrak{M}_{g,n,\beta}$.  Let $C$ be a
curve of genus $g$ with $4$ marked points with a label $\beta$.  This is equivalent by definition to
a morphism $\pt \to \mathfrak{M}_{g,4,\beta}$. Being a universal curve means that we have the
$C=\mathfrak{M}_{g,5,\beta}\times_{\mathfrak{M}_{g,4,\beta}}\pt$ that is the following diagram
\begin{displaymath}
  \xymatrix{ C \ar[r]^{\varphi} \ar[d]&\mathfrak{M}_{g,5,\beta}\ar[d]\\ \pt \ar[r]&\mathfrak{M}_{g,4,\beta}}
\end{displaymath}
is cartesian.  Let explain the morphism $\varphi$. To a smooth point
$y\in C\setminus \{x_{1}, \ldots ,x_{4}\}$, $f(y)$ is the curve $C$ where $y$ is now $x_{5}$.  If
$y=x_{i}$, then $\varphi(y)$ is the curve $C$ where we attach a $\mathbb{P}^{1}$ at $x_{i}$ (let's
say at $0$ of this $\mathbb{P}^{1}$) with $\beta=0$ and you marked $x_{i}$ and $x_{5}$ at $1$ and
$\infty$. If $y$ is a node which is the intersection with $C_{i}$ and $C_{j}$, then we replace the
node by a $\mathbb{P}^{1}$ with degree $0$ which meet $C_{i}$ at $0$, $C_{j}$ at $\infty$ and we
marked the point $1$ by $x_{5}$ on this $\mathbb{P}^{1}$.
 
Here is a picture that we hope makes this clearer (see Figure \ref{fig:universal}).  Forgetting the
last point makes the component $(\mathbb{P}^{1},\beta=0)$ unstable so one should contract it and we
get back $C$.

    




\begin{figure}[ht]
  \centering
 
  \begin{tikzpicture}[scale=1]
    \FourCurveVex{0}{0}{0.1}{4}{firstCurve}; \draw (0.3,1) node {$x_{1}$}; \draw (0.5,1.7) node
    {$x_{2}$}; \draw (0.6,2.5) node {$x_{3}$}; \draw (0.4,3.2) node {$x_{4}$}; \draw (1.3,3.2) node
    {$y$}; \draw (1.1,1.) node {$y$}; \draw (-0.3,2) node {$C,\beta$};

    \draw [-latex] (2,2) -- (5,2) node [above,midway] {$f$} ; \draw (6,2) node
    {$\mathfrak{M}_{g,5,\beta}$};




    \FourCurveVexSmall{3}{-3}{0.1}{2}{firstCurve} \draw (1.5,-2) node {$f(y)=$}; \draw (3,-2.6) node
    {$x_{1}$}; \draw (3.1,-2.1) node {$x_{2}$}; \draw (3.1,-1.7) node {$x_{3}$}; \draw (3.1,-1.4)
    node {$x_{4}$}; \draw (4.2,-2.4) node {$y=x_{5}$}; \draw (8.2,-2) node { if $y$ is not a marked
      point};

    \ThreeCurveVex{3}{-7}{.1}{2}{firstCurve} \TwoCurveCaveVert{3}{-5.5}{2}{0.1}{thirdCurve} \draw
    (2,-6) node {$f(y)=$}; \draw (3.7,-6.5) node {$x_{1}$}; \draw (3.8,-6.1) node {$x_{2}$}; \draw
    (3.8,-5.5) node {$x_{3}$}; \draw (3.6,-4.7) node {$x_{4}$}; \draw (4.7,-4.7) node {$y=x_{5}$};
    \draw (3,-7.5) node {$C,\beta$}; \draw (6,-5.4) node {$\mathbb{P}^{1},\beta=0$}; \draw
    (8.2,-6.5) node { if $y$ is the marked point $x_{4}$};

  \end{tikzpicture}
  \caption{Universal curve}\label{fig:universal}
\end{figure}
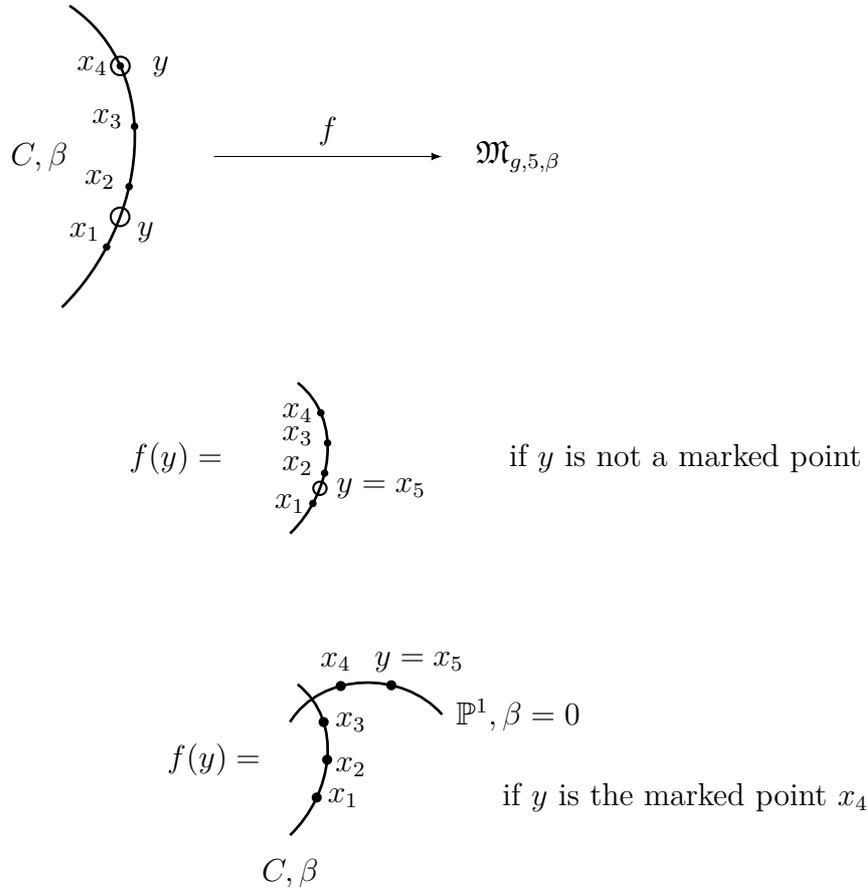

\subsection{Gromov-Witten classes and cohomological field theory}
\label{sec:grom-witt-class}
We first define the Gromov-Witten classes. Let $\alpha_{1}, \ldots ,\alpha_{n} \in H^{*}(X)$. Let
$\beta\in H_{2}(X,\mathbb{Z})$. We define the following morphism
\begin{align*}
  \varphi_{g,n,\beta}:H^{*}(X)\times \cdots \times H^{*}(X)& \longrightarrow
  H^{*}(\overline{\mathcal{M}}_{g,n}) \\ (\alpha_{1}, \ldots ,\alpha_{n})& \longmapsto
  p_{*}\left(\prod_{i=1}^{n}e_{i}^{*}\alpha_{i}\cap [\overline{\mathcal{M}}_{g,n}(X,\beta)]^{\vir}\right)
\end{align*}

\begin{thm}[Kontsevich-Manin \cite{MR1369420}]
All these maps $\{\varphi_{g,n,\beta}\}_{g,n\in\mathbb{N},\beta\in H_{2}(X,\mathbb{Z})}$ together form a
cohomological field theory.  
\end{thm}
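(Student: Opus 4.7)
The plan is to verify the Kontsevich--Manin axioms defining a cohomological field theory: (i) $S_n$-equivariance, (ii) the splitting axiom along a boundary divisor of separating type, (iii) the self-gluing (loop) axiom along a non-separating node, and (iv) the unit/fundamental-class axiom. Each axiom translates to an identity in the Chow (or cohomology) ring of $\overline{\mathcal{M}}_{g,n}$, and the strategy is uniformly the same: realize the relevant boundary of $\overline{\mathcal{M}}_{g,n}$ as the image of a gluing map, lift this diagram to the corresponding moduli of stable maps, and propagate the identity through the evaluation-pullback / virtual-cap / $p$-pushforward composition using the projection formula and base change.

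Equivariance is essentially tautological: $S_n$ acts compatibly on $\overline{\mathcal{M}}_{g,n}$, on $\overline{\mathcal{M}}_{g,n}(X,\beta)$, on the evaluation morphisms, and on the virtual class $[\overline{\mathcal{M}}_{g,n}(X,\beta)]^{\vir}$, so the construction of $\varphi_{g,n,\beta}$ is manifestly equivariant. The unit axiom $\varphi_{g,n+1,\beta}(\alpha_1,\dots,\alpha_n,1) = \pi^{*}\varphi_{g,n,\beta}(\alpha_1,\dots,\alpha_n)$ follows from applying the projection formula to the forgetful morphism $\pi:\overline{\mathcal{M}}_{g,n+1}(X,\beta)\to\overline{\mathcal{M}}_{g,n}(X,\beta)$, together with the key fact $\pi^{*}[\overline{\mathcal{M}}_{g,n}(X,\beta)]^{\vir}=[\overline{\mathcal{M}}_{g,n+1}(X,\beta)]^{\vir}$, which expresses that the extra marked point carries no additional obstructions, and the compatibility $\pi \circ e_{n+1}^{*}(1) = 1$.

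The main content, and the main obstacle, is the splitting axiom. For a decomposition $g=g_1+g_2$, $n=n_1+n_2$, one has a boundary gluing morphism
\[
\mu:\overline{\mathcal{M}}_{g_1,n_1+1}\times\overline{\mathcal{M}}_{g_2,n_2+1}\longrightarrow\overline{\mathcal{M}}_{g,n}
\]
and a parallel diagram on the stable-maps side whose source is the fibre product
\[
\coprod_{\beta_1+\beta_2=\beta}\overline{\mathcal{M}}_{g_1,n_1+1}(X,\beta_1)\times_{X}\overline{\mathcal{M}}_{g_2,n_2+1}(X,\beta_2),
\]
where the fibre product is formed along the evaluations at the extra marked point that records the node. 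The crucial input is the virtual-class identity
\[
\mu^{!}\bigl[\overline{\mathcal{M}}_{g,n}(X,\beta)\bigr]^{\vir} = \sum_{\beta_1+\beta_2=\beta}\Delta_{X}^{!}\Bigl(\bigl[\overline{\mathcal{M}}_{g_1,n_1+1}(X,\beta_1)\bigr]^{\vir}\times\bigl[\overline{\mathcal{M}}_{g_2,n_2+1}(X,\beta_2)\bigr]^{\vir}\Bigr),
\]
proved by Behrend--Fantechi via the compatibility of the perfect obstruction theory on the space of stable maps with the one on the boundary stratum; this is the technical heart of the whole theorem. Once granted, the splitting axiom for $\varphi_{g,n,\beta}$ is extracted by combining the projection formula along $p$, base change along $\mu$, and the Künneth decomposition $[\Delta_{X}]=\sum_{e}T_{e}\otimes T^{e}\in H^{*}(X\times X)$ of the diagonal (which is what converts the fibre product over $X$ into the sum over a dual basis that appears on the right-hand side of the splitting axiom).

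The non-separating axiom is handled in exactly the same manner with the self-gluing morphism $\overline{\mathcal{M}}_{g-1,n+2}\to\overline{\mathcal{M}}_{g,n}$ and the analogous virtual-class identity; no essentially new idea is required. Thus, modulo the Behrend--Fantechi splitting of the virtual class at boundary nodes, the verification reduces to standard intersection-theoretic manipulations (projection formula, base change, Künneth), and this is the step I expect to be the main obstacle.
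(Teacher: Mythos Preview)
The paper does not give its own proof of this theorem: it is stated as a result of Kontsevich--Manin with a citation, followed only by a remark unpacking the splitting property. So there is no in-paper argument to compare against.

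That said, your outline is the standard one and is consistent with what the paper itself highlights. In the remark following the theorem the authors single out the splitting formula as the essential content, and later (Theorem~\ref{thm:gluing,virt,coho}, attributed to Behrend) they record precisely the virtual-class identity
\[
\alpha_{*}\Bigl(\sum_{\beta'+\beta''=\beta} \Delta^{!}\bigl([\overline{\mathcal{M}}_{0,n_{1}+1}(X,\beta')]^{\vir}\otimes[\overline{\mathcal{M}}_{0,n_{2}+1}(X,\beta'')]^{\vir}\bigr)\Bigr)=g^{!}[\overline{\mathcal{M}}_{0,n}(X,\beta)]^{\vir}
\]
that you correctly identify as the technical heart of the splitting axiom. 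They also note (Remark~\ref{rem,orientation,coho}) that this formula is a combination of Behrend's ``cutting edges'' and ``isogenies'' axioms for the virtual class, which is a slightly finer attribution than your blanket reference to Behrend--Fantechi. Your sketch is accurate as a plan; just be aware that in this paper the theorem functions as background, not as something the authors prove.
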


\begin{remark}
  \begin{enumerate}
  \item We refer to \cite{MR1369420} for a complete
    definition of a cohomological field theory.
  \item Unwindy the definition, is the so-called splitting property. Let
    $g_{1},g_{2},n_{1},n_{2} \in \mathbb{N}$. Denote by $g=g_{1}+g_{2}$ and $n= n_{1}+n_{2}$. Consider
    the gluing morphism of stacks
    \begin{align}\label{eq:gluing,moprh}
 g:     \overline{\mathcal{M}}_{g_{1},n_{1}+1}\times \overline{\mathcal{M}}_{g_{2},n_{2}+1} &\to
      \overline{\mathcal{M}}_{g,n} \\ (C_{1},C_{2}) & \longmapsto C_{1}\circ C_{2} \nonumber
    \end{align}
    that identifies the $n_{2}+1$-th marked point of $C_{2}$ with the first marked point of
    $C_{1}$. Notice that the gluing morphism above is given by the pushout.  More
    precisely, let $({C}_{1}\to S,\underline{\sigma})$ in
    $\overline{\mathcal{M}}_{g_{1},n_{1}+1}(S)$ and $(C_{2}\to S,\underline{\sigma})$ in
    $\overline{\mathcal{M}}_{g_{2},n_{2}+1}(S)$ then $ C_{1}\circ C_{2}$ is the
    pushout\footnote{Notice that pushouts do not exist for any morphisms of schemes in the category
      of schemes but pushout along closed immersion does exist.} $C_{1}\coprod_{S}C_{2}$ given by
    the two closed immersion given by the marking $\sigma_{1}:S\to C_{1}$ and $\sigma_{n_{2}+1}:S\to
    C_{2}$.

This corresponds to the following picture
  \begin{figure}[ht]
     \centering
     \begin{tikzpicture}
        \FourCurveVexNorm{0}{0.5}{2}{3}{firstCurve}
\draw (0.2,2.5) node {$C_{1}$}; 
\draw (1.5,2.6) node {$x_{4}$}; 
\draw (1.3,2.1) node {$x_{3}$}; 
\draw (0.8,1.5) node {$x_{2}$}; 
\draw (0.4,1.1) node {$x_{1}$}; 

\ThreeCurveCave{0}{-.5}{2}{-3}{thirdCurve}
\draw (0.5,-2.3) node {$C_{2}$}; 

\draw (0.7,-1.4) node {$x_{3}$}; 
\draw (1.2,-2.1) node {$x_{2}$}; 
\draw (1.4,-2.8) node {$x_{1}$}; 

\draw [->]  (1, 0) -- (3,0) node [above, midway] {$g$};

\ThreeCurveVexNorm{4}{-0.5}{2}{3}{firstCurve}
\draw (5.5,1.9) node {$x_{5}$}; 
\draw (5.3,1.2) node {$x_{4}$}; 
\draw (4.8,.5) node {$x_{3}$}; 

\TwoCurveCave{4}{.5}{2}{-3}{thirdCurve}
\draw (4.8,-.6) node {$x_{2}$}; 
\draw (5.4,-1.7) node {$x_{1}$}; 

\draw (7,0) node {$C_{1}\circ C_{2}$}; 

\end{tikzpicture}
      \caption{Gluing curves: the output of $C_{2}$, that is $x_{3}$, with the first input of $C_{1}$}
     \label{fig:gluing}
   \end{figure}
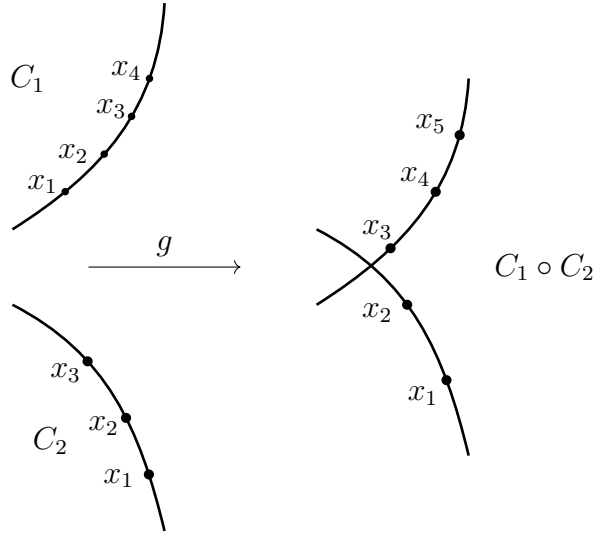

The splitting formula is the following
   \begin{align}\label{eq:split}
     g^{*}\varphi_{g,n,\beta}(\alpha_{1}, \ldots ,\alpha_{n})=\sum_{\stackrel{g_{1}+g_{2}=g}{\beta_{1}+\beta_{2}=\beta}}\sum_{a=0}^{s}\varphi_{g_{1},n_{1}+1,\beta_{1}}(\alpha_{1}, \ldots ,\alpha_{n_{1}},T_{a})\varphi_{g_{2},n_{2}+1,\beta_{2}}(T^{a},\alpha_{n_{1}+1}, \ldots ,\alpha_{n})
   \end{align}
where $(T_{a})_{a\in\{0, \ldots ,s\}}$ is a basis of $H^{*}(X)$ and $(T^{a})$ is its Poincaré dual basis.
  \end{enumerate}
Beyond  this formula, the idea is that we can control the behaviour of the virtual fondamental class when we glue
curves. We will see this again later.
\end{remark}

Restricting to genus $0$, we can reformulate this equality \eqref{eq:split} by the following
statement.

\begin{cor}\label{cor:alg,coho}
  We have a morphism of operads in vector spaces
  \begin{align*}
 \psi_{n,\beta}:   H_{*}(\overline{\mathcal{M}}_{0,n+1}) \to \End(H^{*}(X))[n]:=\Hom(H^{*}(X)^{\otimes n}, H^{*}(X))
  \end{align*}
given by
\begin{align*}
  \psi_{0,n,\beta}(\gamma)(\alpha_{1}, \ldots
  ,\alpha_{n})=(e_{n+1})_{*}\left(p^{*}\gamma\cup \prod_{i=1}^{n}e_{i}^{*}\alpha_{i}\cap [\overline{\mathcal{M}}_{0,n}(X,\beta)]^{\vir}\right)
\end{align*}
\end{cor}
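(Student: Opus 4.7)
The plan is to verify that the collection $\psi_{n} := \sum_{\beta} \psi_{0,n,\beta}\, q^{\beta}$, viewed as a map with values in $\End(H^{*}(X))[n]$ over the Novikov ring (so that the sums over $\beta_{1}+\beta_{2}=\beta$ converge formally), defines a morphism of operads. Since the operad structure on the collection $\{H_{*}(\overline{\mathcal{M}}_{0,n+1})\}_{n}$ is generated by the gluing maps $g$ of \eqref{eq:gluing,moprh} (at the last marked point of the first factor and the first marked point of the second), and the operadic composition in $\End(H^{*}(X))$ is ordinary composition of multilinear maps, the content is to verify, for each $\gamma_{1} \in H_{*}(\overline{\mathcal{M}}_{0,n_{1}+2})$ and $\gamma_{2} \in H_{*}(\overline{\mathcal{M}}_{0,n_{2}+1})$,
\begin{align*}
  \psi_{n_{1}+n_{2}}\bigl(g_{*}(\gamma_{1}\otimes \gamma_{2})\bigr) = \psi_{n_{1}+1}(\gamma_{1}) \circ_{i} \psi_{n_{2}}(\gamma_{2}),
\end{align*}
together with unitality and equivariance under $\mathfrak{S}_{n}$.

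First, I would set up the identity by unpacking both sides. The right-hand side, applied to classes $\alpha_{1},\ldots,\alpha_{n_{1}+n_{2}}$, is computed by inserting $\psi_{n_{2}}(\gamma_{2})(\alpha_{j},\ldots,\alpha_{j+n_{2}-1})$ into the $i$-th slot of $\psi_{n_{1}+1}(\gamma_{1})$; using Poincaré duality on $X$ to trade the pushforward $(e_{n_{2}+1})_{*}$ against a pullback $e_{i}^{*}$ along the diagonal, this becomes an integral over the fibre product
\begin{align*}
  \overline{\mathcal{M}}_{0,n_{1}+2}(X,\beta_{1})\, {}_{e_{i}}\!\times_{X,\, e_{n_{2}+1}} \overline{\mathcal{M}}_{0,n_{2}+1}(X,\beta_{2})
\end{align*}
pulled back by $\gamma_{1}\boxtimes \gamma_{2}$. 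For the left-hand side, the projection formula lets me move $g_{*}$ across the cap product: $g_{*}(\gamma_{1}\otimes\gamma_{2}) \cap [\overline{\mathcal{M}}_{0,n_{1}+n_{2}+1}(X,\beta)]^{\vir}$ is computed by pulling $\gamma_{1}\otimes\gamma_{2}$ back via $p$ and pairing it with the restriction of the virtual class to the boundary divisor $\overline{\mathcal{M}}_{0,n_{1}+n_{2}+1}(X,\beta) \times_{\overline{\mathcal{M}}_{0,n_{1}+n_{2}+1}} g(\overline{\mathcal{M}}_{0,n_{1}+2}\times \overline{\mathcal{M}}_{0,n_{2}+1})$.

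The crucial input is the \emph{splitting axiom} \eqref{eq:split}, which identifies the restriction of $[\overline{\mathcal{M}}_{0,n+1}(X,\beta)]^{\vir}$ to this boundary with $\sum_{\beta_{1}+\beta_{2}=\beta}\sum_{a}[\overline{\mathcal{M}}_{0,n_{1}+2}(X,\beta_{1})]^{\vir} \otimes T_{a}\otimes T^{a}\otimes [\overline{\mathcal{M}}_{0,n_{2}+1}(X,\beta_{2})]^{\vir}$, where the dual basis $(T_{a},T^{a})$ expresses the class of the diagonal $\Delta_{X}$. Substituting this and recognising $\sum_{a} e^{*}T_{a}\otimes e^{*}T^{a}$ as Poincaré dual to the fibre product over $X$ matches the two sides. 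The same splitting axiom applied to the unit $[\overline{\mathcal{M}}_{0,3}] \in H_{*}(\overline{\mathcal{M}}_{0,3})$, combined with the fundamental class axiom $\varphi_{0,3,0}(\alpha,\beta,1) = \alpha\cdot\beta$ (part of the CohFT package of \cite{MR1369420}), yields compatibility with the operadic unit; equivariance is immediate from the $\mathfrak{S}_{n}$-equivariance of the evaluation maps.

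The main obstacle is bookkeeping: one must carefully track which marked points are being glued, align the Poincaré duality conventions so that the diagonal class appears on both sides with the same sign, and take care that the stacky degrees of the gluing morphism (which is a closed immersion of degree one onto a boundary divisor, up to the automorphism of the node) match. Modulo these verifications, which are essentially the content of the proof of the splitting axiom restated at the level of operads, the corollary follows.
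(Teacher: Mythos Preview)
Your proposal is correct and follows exactly the route the paper indicates: the corollary is presented there simply as a reformulation of the splitting formula \eqref{eq:split} in genus $0$, and your argument spells out precisely why the operad compatibility \eqref{diag:morph,operad} is equivalent to that formula via Poincar\'e duality and the projection formula. The paper gives no further details, so your unpacking of the bookkeeping (diagonal class, unit, $\mathfrak{S}_n$-equivariance) is a faithful expansion of what is asserted.
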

Another way of expressing exactly the same statement is to say that the cohomology $H^{*}(X)$ is an
$\{H_{*}(\overline{\mathcal{M}}_{0,n+1})\}_{n\geq 2}$-algebra.  The goal of this survey is to
explain how to remove
the (co)homology from this corollary and doing this at the geometrical level.

\subsection{Reviewed on operads}
\label{sec:recall-operads}

We add this section for completeness as operads are not so well known to algebraic
geometer\footnote{The first author did not know this notion before the working seminar in
  Montpellier where these ideas were first discussed.}.

An operad is the following data :
\begin{enumerate}
\item A family of objects in a category (vector spaces, schemes or Deligne-Mumford stacks)
$\mathcal{O}(n)$ for all $n\in \mathbb{N}$.  The example that one should have in mind for this note is
$\mathcal{O}(n)=\overline{\mathcal{M}}_{0,n+1}$. We should think that $\mathcal{O}(n)$ as a
collection of operations, each with $n$ inputs and one output. In the case of $\overline{\mathcal{M}}_{0,n+1}$, the
marked points $x_{1}, \ldots ,x_{n}$ can be thought as the inputs and the last marked points,
$x_{n+1}$, is thought as the output.
\item A collection  of operations: putting the output of $\mathcal{O}(b)$ with the $i$-th input of $\mathcal{O}(a)$. Let $a,b \in \mathbb{N}$, for any
$i\in\{1, \ldots ,a\}$, we have
\begin{align}\label{eq:5}
\circ_{i} :  \mathcal{O}(a)\times \mathcal{O}(b) &\to \mathcal{O}(a+b-1) 
\end{align}
satisfying some relations like associativity of the compositions.
\end{enumerate}

\begin{example}We give three examples of operads that we will use in the next sections.
  \begin{enumerate}
  \item The example $\mathcal{O}(n)=\overline{\mathcal{M}}_{0,n+1}$ is an operads in DM stacks where the
    composition $C_{1}\circ_{i}C_{2}$ is obtained by gluing the last marked point of $C_{2}$ to the
$i$-th marked point of $C_{1}$ (see \eqref{eq:gluing,moprh} and Figure \ref{fig:gluing} for an example of $\circ_{1}$ with stable
curves). Notice that here $\mathcal{O}(0)$ and $\mathcal{O}(1)$ are empty. A standard way of
completing this is to put $\mathcal{O}(0)=\mathcal{O}(1)=\pt$ so that $\mathcal{O}(1)$ is the unit.
\item Another example of operads that we will use is $\mathcal{O}_{\beta}(n)=\mathfrak{M}_{0,n+1,\beta}$. This is a
  graded operad that is in the composition \eqref{eq:5}, we sum the grading :
  \begin{displaymath}
    \circ_{i} :  \mathcal{O}_{\beta}(a)\times \mathcal{O}_{\beta'}(b) \to \mathcal{O}_{\beta+\beta'}(a+b-1) 
  \end{displaymath}
The composition morphism for this operad is by  gluing the curves as in the previous example.
\item Let $V$ a vector space. Put $\mathcal{O}(n)=\End(V)[n]:=\Hom(V\times\cdots\times V,V)$. This is
  called the endomorphism operad in vector spaces. The composition is given by
  \begin{displaymath}
    (f \circ_{i} g)(v_{1}, \ldots ,v_{a+b-1})=f(v_{1}, \ldots ,v_{i-1},g(v_{i}, \ldots ,v_{i+b-1}),v_{i+b} \ldots ,v_{a+b-1}) 
  \end{displaymath}
  \end{enumerate}
\end{example}

Let $\mathcal{O}:=\{\mathcal{O}(n)\}_{n\in\mathbb{N}}$ and $\mathcal{E}:=\{\mathcal{E}(n)\}_{n\in\mathbb{N}}$ be two
operads. A morphism of operads from $f:\mathcal{O}\to\mathcal{E}$ is a family of morphism
$f_{n}:\mathcal{O}(n)\to \mathcal{E}(n)$ such that the following diagram is commutative
\begin{align}\label{diag:morph,operad}
  \xymatrix{\mathcal{O}(a)\times \mathcal{O}(b)\ar[r]^{f_{a},f_{b}}\ar[d]_{\circ_{i}}&\mathcal{E}(a)\times \mathcal{E}(b) \ar[d]^{\circ_{i}}\\
\mathcal{O}(a+b-1) \ar[r]^{f_{a+b-1}}& \mathcal{E}(a+b-1)}
\end{align}


\section{Lax algebra structure on $X$}
\label{sec:lax-algebra-struct}

In Corollary \ref{cor:alg,coho}, we have a collection of morphisms
$H_{*}(\overline{\mathcal{M}}_{0,n+1}) \to \End(H^{*}(X))[n]$ that form a morphism of operads. The
idea is to remove the (co)homology from this statement, that is, to construct in  a purely
geometrical way, a collection morphisms
$\overline{\mathcal{M}}_{0,n+1} \to \End(X)[n]$ in an appropriate category and then to see if these
morphisms form a morphism of operads. The correct category is the $(\infty,1)$-category of derived
stacks and the morphism is only a lax morphism of $\infty$-operads (see Theorem \ref{thm,main}).
\subsection{Main result}
\label{sec:main-result}

Denote by $\mathbb{R}\overline{\mathcal{M}}_{0,n+1}(X,\beta)$ the derived enhancement of
$\overline{\mathcal{M}}_{0,n+1}(X,\beta)$ (see subsection \ref{sec:deriv,enhanc,under}).
From the two natural morphisms of Remark \ref{rem:2,morphisms}, we get the following diagram 
\begin{align}\label{key,diag}
  \xymatrix{&\coprod_{\beta}\mathbb{R}\overline{\mathcal{M}}_{0,n+1}(X,\beta) \ar[ld]_{p,e_{1}, \ldots ,e_{n}}
              \ar[rd]^{p,e_{n+1}} & \\ \overline{\mathcal{M}}_{0,n+1}\times X^{n} &&
    \overline{\mathcal{M}}_{0,n+1}\times X
}
\end{align}
We prefer to state our theorem and then give explanations about it.
\begin{thm}\label{thm,main}
  Let $X$ be a smooth projective variety. The diagram \eqref{key,diag} give a family of morphisms
   \begin{align*}
    \varphi_{n}:\overline{\mathcal{M}}_{0,n+1}  \to \underline{\End}^{\corr}(X)[n]:=\underline{\Hom}^{\corr}(X^{n},X)
  \end{align*}
that forms a lax morphism of $\infty$-operads in the category of derived stacks.
\end{thm}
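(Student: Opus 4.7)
The plan is to invoke a brane action formalism in the spirit of To\"en, applied to the graded operad of Costello stacks $\{\mathfrak{M}_{0,n+1,\beta}\}$, and to exploit its universal curve property from \thmref{thm,Costello} together with the realization of $\mathbb{R}\overline{\mathcal{M}}_{0,n+1}(X,\beta)$ as a derived mapping stack. The target category is $\cordSt$, the $(\infty,1)$-category of correspondences in derived stacks, where a morphism $A \to B$ is a span $A \leftarrow Z \to B$. Lax morphism of $\infty$-operads means that gluing correspondences along $X$ yields only a natural map (not an equivalence) from the glued moduli into $\coprod_\beta \mathbb{R}\overline{\mathcal{M}}_{0,n+1}(X,\beta)$.

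First I would upgrade the Costello stacks to a graded operad in $\dSt$: the operadic composition is the gluing of prestable curves along marked sections, which exists as a (derived) pushout along closed immersions. With the unit $\mathfrak{M}_{0,2,0} \simeq \pt$ and associativity of gluing, one gets an $\NE(X)$-graded $\infty$-operad whose arity-$n$ object is $\coprod_\beta \mathfrak{M}_{0,n+1,\beta}$. Moreover, the forgetful morphism $\mathfrak{M}_{0,n+2,\beta} \to \mathfrak{M}_{0,n+1,\beta}$ is the universal curve, so this operad is "coherent" in the sense required by brane actions: the arity-$(n+1)$ piece coincides with the universal curve over the arity-$n$ piece.

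Next I would realize the derived mapping spaces. If $\mathfrak{C} \to \mathfrak{M}_{0,n+1,\beta}$ denotes the universal curve, then
\[
\coprod_\beta \mathbb{R}\overline{\mathcal{M}}_{0,n+1}(X,\beta) \;\simeq\; \mathbb{R}\underline{\Map}_{\mathfrak{M}_{0,n+1,\beta}}\!\bigl(\mathfrak{C},\, X \times \mathfrak{M}_{0,n+1,\beta}\bigr),
\]
and the $n+1$ tautological sections of $\mathfrak{C}$ provide the evaluation maps $e_1,\ldots,e_{n+1}$. Since derived mapping stacks turn pushouts of sources into pullbacks of targets, the gluing of two curves along a marked section translates into the fibre product of moduli of stable maps over $X$. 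This is precisely the geometric input that makes the correspondence diagram in \eqref{key,diag} compatible with operadic composition (up to a lax arrow given by the natural closed immersion from the glued moduli into the moduli of stable maps over the glued curve).

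Then I would apply the brane action machinery: from a coherent operad $\mathcal{O}$ in an $\infty$-topos and any object $X$, one obtains a canonical lax morphism of $\infty$-operads $\mathcal{O} \to \underline{\End}^{\corr}(X)$, whose span at arity $n$ is $\mathcal{O}(n) \leftarrow \mathcal{O}(n+1)_{X} \to X$, where $\mathcal{O}(n+1)_X$ is the derived mapping stack from the universal curve into $X$. Applied to the Costello operad, this reproduces exactly \eqref{key,diag}, and pushing forward along the smooth contraction $p\colon \mathfrak{M}_{0,n+1,\beta} \to \overline{\mathcal{M}}_{0,n+1}$ gives the desired $\varphi_n$. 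The main obstacle will be verifying the coherence/lax-compatibility of the gluing: concretely, one must show that the derived pushout $C_1 \coprod_{\pt} C_2$ of stable curves along a section is computed as the underived one, and that $\mathbb{R}\underline{\Map}(C_1 \coprod_{\pt} C_2, X) \simeq \mathbb{R}\underline{\Map}(C_1, X) \times_X \mathbb{R}\underline{\Map}(C_2, X)$ at the level of the full moduli. This is where derived algebraic geometry is essential — in the classical setting the pushout is badly behaved, but the derived tangent complex of a nodal curve is controlled by the short exact sequence relating the tangent complexes of the components and the node, making the identification above an equivalence and the lax morphism well-defined.
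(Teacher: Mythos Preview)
Your overall architecture matches the paper's: apply a brane-action mechanism to the Costello operad $\{\coprod_\beta \mathfrak{M}_{0,n+1,\beta}\}$, use the derived mapping stack description of $\mathbb{R}\overline{\mathcal{M}}_{0,n+1}(X,\beta)$, and then pass along $p$ to $\overline{\mathcal{M}}_{0,n+1}$. Two of the technical assertions you make, however, are not quite right, and each is a point the paper handles with some care.

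First, you assert that the Costello operad is coherent, and that the verification reduces to showing ``the derived pushout $C_1\coprod_{\pt} C_2$ \ldots\ is computed as the underived one.'' The paper says explicitly the opposite: the inclusion of schemes into derived stacks does \emph{not} preserve pushouts along closed immersions, so there is only a comparison map $\theta\colon C_1\coprod^{\mathrm{dst}}_{\pt} C_2 \to C_1\coprod^{\mathrm{sch}}_{\pt} C_2$, and the Costello operad is therefore \emph{not} coherent in Lurie's sense. What saves the argument is not that $\theta$ is an equivalence, but that the functor $\mathbb{R}\Hom(-,X)$ sends $\theta$ to an equivalence; this is what yields $\mathbb{R}\underline{\Map}(C_1\circ C_2,X)\simeq \mathbb{R}\underline{\Map}(C_1,X)\times_X \mathbb{R}\underline{\Map}(C_2,X)$. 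So your second displayed identification is the correct one to aim for, but the route you propose to it is blocked.

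Second, and more importantly, you locate the laxness in the wrong place. You describe the brane action as producing a \emph{lax} morphism already at the level of the Costello operad, with the lax $2$-cell being ``the natural closed immersion from the glued moduli into the moduli of stable maps over the glued curve.'' In the paper this map, called $\widetilde{\alpha}$, is an \emph{isomorphism}: once the underlying curve is recorded in $\mathfrak{M}_{0,\bullet,\beta}$ (where the forgetful map does not contract components), there is a unique way to cut the glued stable map according to $\sigma$ and $\tau$. Thus $X$ is a \emph{strict} $\mathfrak{M}^\otimes$-algebra in correspondences. The laxness in \thmref{thm,main} enters only at the final step, through the operad morphism $\coprod_\beta \mathfrak{M}_{0,n+1,\beta}\to \overline{\mathcal{M}}_{0,n+1}$, which is itself only lax: stabilising contracts the intermediate trees of $\mathbb{P}^1$'s, so distinct decompositions on the $\mathfrak{M}$-side collapse over $\overline{\mathcal{M}}$, and the resulting $\alpha$ of \eqref{eq:12} is surjective but not injective. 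Your sketch treats the pushforward along $p$ as an innocuous relabelling, when in fact it is the entire source of the ``lax'' in the theorem.
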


\begin{remark}
In more conceptual terms,  $X$ is lax $\{\overline{\mathcal{M}}_{0,n+1}\}_{n}$-algebra in the
  category of correspondence in derived stack.
\end{remark}

In the next sections, we will explain the contents of this theorem, namely
\begin{itemize}
\item In \S \ref{sec:categ,corr}, we define the notion of correspondances in a cateogry.
\item In \S \ref{sec:deriv,enhanc,under},  we define the natural derived enhancement of the moduli space of
  stable maps $\overline{\mathcal{M}}_{g,n}(X,\beta)$ and in \ref{sec:definition,End}, we explain
  the underlying notation $\underline{\Hom}^{\corr}(X^{n},X)$.
\item In \S \ref{sec:lax,morphism}, we explain what is a lax morphism between $\infty$-operads.
\item Th notion of $\infty$-operads is a bit delicat and it is explain in  In \S \ref{thm:brane}.
\end{itemize}

\subsection{Category of correspondances}
\label{sec:categ,corr}
Let $\dSt$ be the $\infty$-category of derived stacks. We denote $\dSt^{\corr}$ the
$(\infty,2)$-category of correspondences in derived stack which is defined informally as follows
(See \S 10 in \cite{1212.3563}). To
have a formal definition, we refer to the notion of span in the website nLab.
\begin{enumerate}
\item Object of $\dSt^{\corr}$ are objects of $\dSt$.
\item The $1$-morphism of $\dSt^{\corr}$ between $X$ and $Y$, denoted by $X\dasharrow Y$, is a diagram
  \begin{displaymath}
    \xymatrix{&U\ar[ld]_{g}\ar[rd]^{f}&\\ X&&Y}
  \end{displaymath}
There is no condition on $f$ or $g$. The composition is given by  fiber product
  \begin{displaymath}
    \xymatrix{&&U\times_{Y}V \ar[rd]\ar[ld]&&\\ &U\ar[ld]\ar[rd]&&V \ar[rd]\ar[dl]&\\ X&&Y&&Z}
  \end{displaymath}
Notice that a morphism from $X$ to $Y$ is also a morphism from $Y$ to $X$ but the composition is not
the identity which is 
\begin{displaymath}
    \xymatrix{&X\ar[ld]_{\Id}\ar[rd]^{\Id}&\\ X&&X}
  \end{displaymath}
Hence a morphism of scheme $f:X\to Y$ induces a morphism $X\dasharrow Y$ in correspondances given by $\Id_{X}:X\to X$
and $f:X\to Y$. This morphism $X\dasharrow Y$  is an isomorphism if and only if we have  $X=X\times_{Y}X$
i.e., $f$ is a monomorphism.
\item The $2$-morphisms are not necessarily  isomorphisms, they are $\alpha: U\to V$ that make the
  diagram commutative.
  \begin{displaymath}
     \xymatrix{&U\ar[dd]^{\alpha}\ar[ld]\ar[rd]&\\ X&&Y \\ &V\ar[ru]\ar[ul]&}
  \end{displaymath}
\end{enumerate}

The diagram 
  \begin{align}\label{key,diag,bis}
  \xymatrix{&\mathbb{R}\overline{\mathcal{M}}_{0,n+1}(X,\beta) \ar[ld]_{e_{1}, \ldots ,e_{n}}
              \ar[rd]^{e_{n+1}} & \\  X^{n} &&
 X
}
\end{align}
is by definition a morphism in $\dSt^{\corr}$ between $X^{n}\dasharrow X$. Notice that the object
that makes the correspondence is a derived stack so we need to be in the category of $\dSt^{\corr}$
and not in the category of correspondence in schemes (or Deligne-Mumford stacks).
\subsection{Derived enhancement}
\label{sec:deriv,enhanc,under}

\subsubsection{Derived enhancement of $\mathbb{R}\overline{\mathcal{M}}_{0,n+1}(X,\beta)$.}
\label{sec:deriv-enhanc-mathbbr}

Here we follow the idea of Sch\"urg-To\"en-Vezzosi \cite{MR3341464} with a small modification. Let $g,n \in\mathbb{N}$ and $\beta\in
H_{2}(X,\beta)$. Recall the definition of  $\mathfrak{M}_{g,n,\beta}$ the moduli space defined before Theorem
\ref{thm,Costello}. 
We denote the relative internal hom in derived stacks by
\begin{align}\label{eq:8}
  \mathbb{R}\Hom_{\dst/\mathfrak{M}_{g,n,\beta}}(\mathfrak{M}_{g,n+1,\beta}, X \times \mathfrak{M}_{g,n,\beta})
\end{align}
As $\mathfrak{M}_{g,n+1,\beta}\to \mathfrak{M}_{g,n,\beta}$ is the universal curve, a point in
$\mathbb{R}\Hom_{\dst/\mathfrak{M}_{g,n,\beta}}(\mathfrak{M}_{g,n+1,\beta}, X \times
\mathfrak{M}_{g,n,\beta})$ is by definition a morphism from $f:C\to X$ where $[C] \in
\mathfrak{M}_{g,n,\beta}$. Notice that the degree $f$ is not related for the moment to $\beta$.
The truncation of \eqref{eq:8} is
\begin{displaymath}
  \Hom_{\dst/\mathfrak{M}_{g,n,\beta}}(\mathfrak{M}_{g,n+1,\beta}, X \times \mathfrak{M}_{g,n,\beta})
\end{displaymath}
and inside it, we have an immersion
\begin{align}\label{eq:9}
 \overline{\mathcal{M}}_{g,n}(X,\beta) \hookrightarrow \Hom_{\dst/\mathfrak{M}_{g,n,\beta}}(\mathfrak{M}_{g,n+1,\beta}, X \times \mathfrak{M}_{g,n,\beta})
\end{align}
given by stable maps $(\mathcal{C},\underline{\sigma},f:\mathcal{C}\to X)$ such that the degree of
$f$ on each irreducible component $\mathcal{C}_{i}$ of $\mathcal{C}$, the degree of
$f\mid_{\mathcal{C}_{i}}$ is $\beta_{i}$ i.e., we have the equality
$(f\mid_{\mathcal{C}_{i}})_{*}[\mathcal{C}_{i}]=\beta_{i}$. This immersion is open because the
degree is discrete.

Using the following result of Schu\"rg-To\"en-Vezzosi, we have

\begin{prop}[Proposition 2.1 in \cite{MR3341464}] Let $X$ be in $\dSt$ and an open immersion of
  $Y\hookrightarrow t_{0}(X)$ where $t_{0}(X)$ is the truncation of $X$. Then there exists a unique derived enhancement of $Y$, denoted by $\widehat{Y}$, such that the
  following diagram is cartesian
  \begin{displaymath}
    \xymatrix{Y\ar@^{(->}[r]^{open} \ar@^{(->}[d]_{closed} & t_{0}(X) \ar@^{(->}[d]^{closed}\\\widehat{Y} \ar@^{(->}[r]^-{open}& X}
  \end{displaymath}
\end{prop}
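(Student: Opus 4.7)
The strategy is to show that open substacks of a derived stack are the "same" as open substacks of its classical truncation, and then to construct $\widehat Y$ as the unique lift. The starting point is the canonical closed immersion $i: t_0(X) \hookrightarrow X$. This immersion has the special property that it induces an equivalence on underlying classical stacks, i.e.\ $t_0(i)$ is an isomorphism. Consequently, the small Zariski (or étale) site of $X$ is equivalent to that of $t_0(X)$: both are controlled by the same topological/classical data, since at the level of points and infinitesimal neighborhoods, $X$ and $t_0(X)$ share the same underlying space.

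My plan is to prove that the assignment $U \mapsto t_0(U)$ induces an equivalence between the poset of open substacks of $X$ and the poset of open substacks of $t_0(X)$. To establish this, I would work Zariski-locally on $X$: choose a smooth atlas $\mathrm{Spec}\, A \to X$ with $A$ a simplicial commutative ring, so that $t_0(\mathrm{Spec}\, A) = \mathrm{Spec}\, \pi_0(A)$. The key input is that any open immersion $V \hookrightarrow \mathrm{Spec}\, \pi_0(A)$ lifts uniquely to an open immersion $\widehat V \hookrightarrow \mathrm{Spec}\, A$: one simply takes the same open subset on Zariski spectra and equips it with the structure sheaf restricted from $A$ (or equivalently one inverts the same set of functions in $A$ that one inverts in $\pi_0(A)$). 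Uniqueness follows because an open immersion of derived schemes is determined by its image in the Zariski topology together with the restricted structure sheaf.

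With this local statement in hand, the global construction of $\widehat Y$ is obtained by descent: the open $Y \subset t_0(X)$ corresponds, via smooth descent on the atlas, to a compatible system of open derived subschemes, which glues to a unique open derived substack $\widehat Y \hookrightarrow X$. To finish, I would verify the cartesian square. The right vertical arrow $t_0(X) \hookrightarrow X$ pulled back along $\widehat Y \hookrightarrow X$ gives an object $\widehat Y \times_X t_0(X)$ that is simultaneously a closed substack of $\widehat Y$ (whose truncation is $t_0(\widehat Y) = Y$) and an open substack of $t_0(X)$ (whose underlying classical space is $Y$). By the functoriality of truncation and the uniqueness statement, this pullback agrees with $Y$, giving the cartesian square. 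Uniqueness of $\widehat Y$ as a derived enhancement fitting the square is then automatic: any candidate $\widehat Y'$ is open in $X$ (as the pullback of the open immersion $Y \hookrightarrow t_0(X)$) with $t_0(\widehat Y') = Y$, so the equivalence of open substacks forces $\widehat Y' \simeq \widehat Y$.

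The main technical obstacle is the descent step: one must ensure that the local derived opens constructed on a smooth atlas glue consistently under the smooth (or étale) topology on $X$. This reduces to checking that the formation of "derived open with prescribed truncation" is compatible with pullback along smooth morphisms of affine derived schemes, which is a direct consequence of the fact that the truncation functor $t_0$ preserves smooth morphisms and the localization $A \to A[f^{-1}]$ behaves naturally under base change. Modulo this compatibility, existence and uniqueness of $\widehat Y$ follow.
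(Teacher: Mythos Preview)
The paper does not give its own proof of this proposition: it is simply quoted as Proposition~2.1 of \cite{MR3341464} and then applied. So there is nothing in the present paper to compare your argument against.

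That said, your proof sketch is essentially the standard one and is correct. The point that the small Zariski (or \'etale) site of a derived stack $X$ is equivalent to that of its truncation $t_0(X)$ is exactly the right input, and your local description via $\Spec A \mapsto \Spec \pi_0(A)$ together with the observation that localizations $A[f^{-1}]$ lift uniquely from $\pi_0(A)$ is the correct affine-local model. The descent step you flag as the main obstacle is indeed where the work lies, but it goes through for the reasons you give: truncation commutes with smooth pullback, and derived localization is stable under base change. One small comment: in the verification of the cartesian square you could be slightly more direct by simply noting that $t_0$ is right adjoint to the inclusion of classical stacks into derived stacks and preserves open immersions, so $t_0(\widehat Y) = t_0(\widehat Y \times_X X) = \widehat Y \times_X t_0(X)$ immediately. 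Otherwise your argument is complete.
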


Taking $Y=\overline{\mathcal{M}}_{g,n}(X,\beta)$ and the open immersion \eqref{eq:9}, we get a
derived  enhancement, which we
denote by $\mathbb{R}\overline{\mathcal{M}}_{g,n}(X,\beta)$.
\begin{remark}
To define the derived enhancement of the moduli space of stable maps
$\overline{\mathcal{M}}_{g,n}(X,\beta)$, Sch\"urg-To\"en-Vezzosi (see  \cite{MR3341464}) used the
moduli space of prestable curve denoted by $\mathfrak{M}^{\pres}_{g,n}$
instead of the moduli space of Costello $ \mathfrak{M}_{g,n,\beta}$ in \eqref{eq:8}. 
So they use the universal curve
of $\mathfrak{M}^{\pres}_{g,n}$ in \eqref{eq:8} instead of $\mathfrak{M}_{g,n+1,\beta}$.
As we will see in the proof (see section \ref{sec:proof-our-main}), the fact that
$\mathfrak{M}_{g,n+1,\beta}$ is the universal curve is fundamental, that is the reason why we made
this little change.

Notice that their derived enhancement is the same
as ours  as the morphism $\mathfrak{M}_{g,n,\beta}\to \mathfrak{M}_{g,n}$ is étale (See
\cite{MR2247968}). 
\end{remark}

\subsubsection{Definition of $\underline{\Hom}^{\corr}(X^{n},X)$}
\label{sec:definition,End}
The underling notation means the internal hom $\underline{\Hom}^{\corr}(X^{n},X)$. To be more precise,
it is the sheaf
\begin{displaymath}
\underline{\Hom}^{\corr}(X^{n},X)(U):=\Hom^{\corr}(X^{n}\times U, X\times U)
\end{displaymath}
It turns out that this is a derived stack because $\Hom^{\corr}(X^{n}\times U, X\times U)$ is the
same as the category of derived stack over $X^{n+1}\times U$. 

  By Yoneda's lemma, the morphism $\varphi_{n}$ of Theorem \ref{thm,main} is exactly given by an object in
  $\Hom^{\corr}(X^{n}\times \overline{\mathcal{M}}_{0,n+1}, X\times \overline{\mathcal{M}}_{0,n+1})$
  which is the diagram \eqref{key,diag}.

\subsection{Lax morphism}
\label{sec:lax,morphism}
Recall that a classical morphism of operad is a commutative diagram \eqref{diag:morph,operad}. A \textit{lax
morphism} is given by a collection of  $2$-morphisms $(\alpha_{a,b})_{a,b\in\mathbb{N}}$ which are not an isomorphism.
\begin{displaymath}
  \xymatrix{\mathcal{O}(a)\times
    \mathcal{O}(b)\ar[r]^{(f_{a},f_{b})}\ar[d]_{\circ_{i}}&\mathcal{E}(a)\times \mathcal{E}(b) \ar@{=>}[ld]_{\alpha_{a,b}}
    \ar[d]^{\circ_{i}}\\
\mathcal{O}(a+b-1) \ar[r]^{f_{a+b-1}}& \mathcal{E}(a+b-1)}
\end{displaymath}

In the following, we will explain why the Theorem 
\ref{thm,main} is lax in geometrical term . Let $\sigma\in
\overline{\mathcal{M}}_{0,a+1}$ and $\tau\in \overline{\mathcal{M}}_{0,b+1}$.
Denote by $\mathbb{R}\overline{\mathcal{M}}_{0,a+1}^{\sigma}(X,\beta)$
(resp. $\mathbb{R}\overline{\mathcal{M}}_{0,a+1}^{\tau}(X,\beta)$ )  the inverse image of
$p^{-1}(\sigma)$ (resp. $p^{-1}(\tau)$).

  The composition is $f_{a+b-1}\circ\circ_{i}$ given by
 \begin{align}
   \label{eq:11}
   \xymatrix{&\coprod_{\beta}\mathbb{R}\overline{\mathcal{M}}_{0,a+b+1}(X,\beta) \ar[rd] \ar[ld]&\\X^{a+b}&&X}
 \end{align}

The second composition morphism $\circ_{1}\circ(f_{a},f_{b})$ is given by the following fibered
product. Let $\beta',\beta''$ such that $\beta'+\beta''=\beta$.

\resizebox{1\linewidth}{!}{
  \begin{minipage}{\linewidth}\begin{align}
  \label{eq:10}
  \xymatrix{&&\coprod_{\beta}\mathbb{R}\overline{\mathcal{M}}^{\sigma}_{0,a+1}(X,\beta)
               \times_{X}\coprod_{\beta}\mathbb{R}\overline{\mathcal{M}}^{\tau}_{0,b+1}(X,\beta) \ar[rd]\ar[ld]&&
\\&\coprod_{\beta}\mathbb{R}\overline{\mathcal{M}}^{\sigma}_{0,a+1}(X,\beta)\times X^{b}
              \ar[rd]^{e_{a+1},\Id_{X^{b}}} \ar[ld]_{e_{1}, \ldots ,e_{a},\Id_{X^{a}}} &&\coprod_{\beta}\mathbb{R}\overline{\mathcal{M}}^{\tau}_{0,a+1}(X,\beta)
              \ar[rd]^{e_{b+1}} \ar[ld]_{e_{1}, \ldots ,e_{b}} \\X^{a}\times X^{b} && X \times X^{b} && X}
\end{align}
\end{minipage}
}
Let fix $\beta$.
Finally, the $2$-morphism $\alpha$ is given by the gluing morphism
\begin{align}
  \label{eq:12}
  \alpha :\coprod_{\stackrel{\beta',\beta''}{\beta'+\beta''=\beta}}\mathbb{R}\overline{\mathcal{M}}^{\sigma}_{0,a+1}(X,\beta')
               \times_{X}\mathbb{R}\overline{\mathcal{M}}^{\tau}_{0,b+1}(X,\beta'') \to\mathbb{R}\overline{\mathcal{M}}^{\sigma\circ\tau}_{0,a+b+1}(X,\beta)
\end{align}
Notice that we can glue the stable maps denoted by $(C,x_{1}, \ldots ,x_{a+1},f)$ and
$(\widetilde{C},\widetilde{x}_{1}, \ldots ,\widetilde{x}_{b+1},\widetilde{f})$ because  the fiber
product is over $X$ which means that $f(x_{a+1})=\widetilde{f}(\widetilde{x}_{1})$. 
This morphism $\alpha$ is surjective but not injective on points. To see the non injectivity,
consider Figure \ref{fig:geo}, then the
gluing curves are the same. Notice that  by stability condition, we have $\beta_{2}\neq 0$. The two
couple of curves $(C_{1}\circ C_{2}, C_{3})$ and $(C_{1},C_{2}\circ C_{3})$ are in two different
connected components of
\begin{displaymath}
  \coprod_{\stackrel{\beta',\beta''}{\beta'+\beta''=\beta}}\mathbb{R}\overline{\mathcal{M}}^{\sigma}_{0,a+1}(X,\beta')
               \times_{X}\mathbb{R}\overline{\mathcal{M}}^{\tau}_{0,b+1}(X,\beta'').
\end{displaymath}

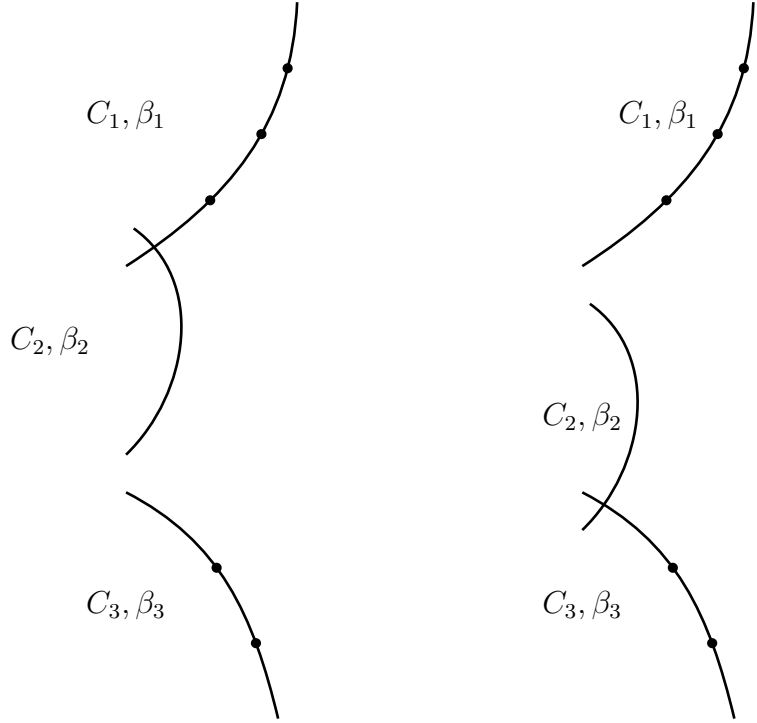
\begin{figure}
  \begin{tikzpicture}
         \ThreeCurveVex{0}{0}{2.25}{3.5}{firstCurve}
\draw (0,2) node   {$C_{1},\beta_{1}$};
  \ZeroCurveVex{0}{-2.5}{.1}{3}{secondCurve}
\draw (-1,-1) node   {$C_{2},\beta_{2}$};
\TwoCurveCave{0}{-3}{2}{-3}{thirdCurve}
\draw (0,-4.5) node   {$C_{3},\beta_{3}$};

         \ThreeCurveVex{6}{0}{2.25}{3.5}{firstCurve}
    \ZeroCurveVex{6}{-3.5}{.1}{3}{secondCurve}
\TwoCurveCave{6}{-3}{2}{-3}{thirdCurve}
\draw (7,2) node   {$C_{1},{\beta}_{1}$};
\draw (6,-2) node   {$C_{2},{\beta}_{2}$};
\draw (6,-4.5) node   {$C_{3},{\beta}_{3}$};

     \label{fig:geo}
   \end{tikzpicture}
  \caption{Geometric reason of the lax action} 
\end{figure}




\section{Proof of our main result}
\label{sec:proof-our-main}
\subsection{Brane action}
In this section, we explain the main theorem of  \cite{Toen-operation-branes-2013}. This theorem has
a lot of prerequisites (like $\infty$-operads, unital and  coherent operads) that are too
complicated for this survey. We refer to the definition of $\infty$-operads by Lurie \cite[Definition
2.1.1.8]{Lurie-higher-algebra} and to the  Definition 3.3.1.4 for the notion of coherent $\infty$-operad. 


\begin{thm}[see Theorem \cite{Toen-operation-branes-2013}]\label{thm:brane}
  Let $\mathcal{O}^\otimes$ be an $\infty$-operad in the $\infty$-category of spaces such that
  \begin{enumerate}
  \item $\mathcal{O}^\otimes(0)=\mathcal{O}^\otimes(1)$ are contractible.
  \item the operad is unital and coherent
  \end{enumerate}
Then $\mathcal{O}(2)$ is a $\mathcal{O}^\otimes$-algebra in the $\infty$-category of
co-correspondence.
\end{thm}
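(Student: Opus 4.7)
My plan is to realize the desired $\mathcal{O}^\otimes$-algebra structure on $\mathcal{O}(2)$ as a morphism of $\infty$-operads from $\mathcal{O}^\otimes$ into the endomorphism $\infty$-operad of $\mathcal{O}(2)$ computed inside $\cocor$. By the universal property of endomorphism operads (Lurie, \emph{Higher Algebra}, \S3), such a morphism amounts to specifying, for every active $n$-ary edge $\mu \in \mathcal{O}(n)$, a cospan
\begin{equation*}
\mathcal{O}(2)^{n} \xrightarrow{\alpha_{\mu}} B(\mu) \xleftarrow{\beta_{\mu}} \mathcal{O}(2),
\end{equation*}
together with coherent compatibilities under operadic composition. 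The data of the $B(\mu)$ and the compatibilities will be extracted from the coherence hypothesis on $\mathcal{O}^\otimes$.

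\textbf{Step 1 (Middle objects from coherence).} By Lurie's Definition 3.3.1.10, coherence of $\mathcal{O}^\otimes$ is equivalent to the statement that the natural forgetful map
\begin{equation*}
q : (\mathcal{O}^{\otimes})^{\mathrm{aug}}_{\mathrm{act}} \longrightarrow \mathcal{O}^{\otimes}_{\mathrm{act}}
\end{equation*}
from the $\infty$-category of active $\mathcal{O}$-operations \emph{with an auxiliary extra input} down to the $\infty$-category of active $\mathcal{O}$-operations, is itself a cocartesian fibration of $\infty$-operads. I set $B(\mu) := q^{-1}(\mu)$: this is the space of $(n+1)$-ary extensions of $\mu$. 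Functoriality and good variance in $\mu$ are automatic, since $q$ is a fibration.

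\textbf{Step 2 (Constructing the two legs).} To define $\beta_{\mu} : \mathcal{O}(2) \to B(\mu)$, I send $\tau \in \mathcal{O}(2)$ to the $(n+1)$-ary operation obtained by inserting $\tau$ between the new auxiliary input and the output of $\mu$; the other input of $\tau$ is filled using the canonical element of $\mathcal{O}(0)$, which exists and is unique because $\mathcal{O}(0)$ is contractible. To define $\alpha_{\mu} : \mathcal{O}(2)^n \to B(\mu)$, given $(\tau_1, \ldots, \tau_n)$ I insert each $\tau_i$ between the auxiliary input and the $i$-th leaf of $\mu$, again using $\mathcal{O}(0) = \mathrm{pt}$ to fill the dangling inputs canonically. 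Both legs arise as prescribed cocartesian lifts in $q$ of specific edges of $\mathcal{O}^{\otimes}_{\mathrm{act}}$.

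\textbf{Step 3 (Operadic compatibility).} For $\mu \in \mathcal{O}(a)$, $\nu \in \mathcal{O}(b)$, and a slot $1 \le i \le a$, I must compare the $\cocor$-composite of the cospans for $\mu$ and $\nu$, which is computed by the pushout $B(\mu) \cup_{\mathcal{O}(2)} B(\nu)$, with the cospan $B(\mu \circ_{i} \nu)$ attached to the composite operation. Here coherence is decisive: cocartesian transport along the composition edge in $\mathcal{O}^{\otimes}_{\mathrm{act}}$ identifies the pushout with $B(\mu \circ_i \nu)$, and because $q$ is a cocartesian fibration \emph{of operads}, this identification is compatible with the binary insertions of Step~2 glued at the matching input/output. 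This gives the required equivalence of cospans, realising $\mathcal{O}(2)$ as a strict, not merely lax, algebra in $\cocor$.

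\textbf{Step 4 (Packaging).} Finally I would bundle the pointwise data of Steps 2--3 into a single map of $\infty$-operads via Lurie's straightening-unstraightening equivalence: the assignment $\mu \mapsto B(\mu)$, together with its structure legs and composition equivalences, is encoded by a cocartesian fibration over $\mathcal{O}^{\otimes}$ whose inert and active lifts implement the operadic combinatorics. Its straightening is the sought $\infty$-operad map $\mathcal{O}^{\otimes} \to \underline{\End}(\mathcal{O}(2))$ in $\cocor$.

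The hardest step is Step~3: reducing strict composition compatibility in $\cocor$ (pushout of middle objects) to the combinatorial coherence condition on $\mathcal{O}^{\otimes}$. This is precisely what Lurie's notion of a coherent $\infty$-operad was engineered to supply, and in fact the substantive content of Toën's theorem is that his coherence condition is exactly what is needed to realise the pushout of the extension spaces $B(\mu)$, $B(\nu)$ as the extension space of the composite. Once one takes this translation for granted, every other step is essentially formal manipulation within the framework of $\infty$-operads.
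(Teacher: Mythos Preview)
The paper does not prove this theorem: it is stated as To\"en's result \cite{Toen-operation-branes-2013} and illustrated by the example $\mathcal{O}(n)=\overline{\mathcal{M}}_{0,n+1}$, but no argument is given. So there is no proof in the paper to compare yours against.

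That said, your sketch has the right architecture --- middle objects are extension spaces, the coherence condition furnishes the pushout identification in Step~3, and straightening packages everything --- but Step~2 is garbled in a way that matters. In the symmetric monoidal structure on co-correspondences of spaces the tensor is \emph{coproduct}, so the source of $\alpha_\mu$ is $\coprod_{i=1}^{n}\mathcal{O}(2)$, not $\mathcal{O}(2)^{n}$; compare the paper's diagram~\eqref{eq:16}. Accordingly $\alpha_\mu$ is defined componentwise: on the $i$-th summand, graft $\tau\in\mathcal{O}(2)$ at the $i$-th input of $\mu$, so the two inputs of $\tau$ become the $i$-th input and the new auxiliary input. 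Likewise $\beta_\mu$ grafts $\mu$ into one input of $\tau$. Neither leg uses $\mathcal{O}(0)$ to ``fill dangling inputs''; what is actually used is that $\mathcal{O}(1)$ is contractible, so that forgetting the auxiliary input (composing with the unique unary operation) returns $\mu$, placing the result in the fibre $B(\mu)$. Your description of inserting all $\tau_i$ simultaneously and capping with $\mathcal{O}(0)$ would produce a $2n$-ary operation, not an $(n{+}1)$-ary one, and does not land in $B(\mu)$. Once Step~2 is corrected, Step~3 is indeed the substantive point, and as you say it is exactly the content of the coherence (``configuration type'') hypothesis.
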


\begin{example}
  We will illustrate the hypothesis and the conclusion of this theorem for the operad
  $\mathcal{O}(n):=\overline{\mathcal{M}}_{0,n+1}$. We choose this example because it is a
  well-known operad and it is easier to explain. Notice that to prove (see \S \ref{thm:brane,Costello}) our main theorem, we need to
  apply to an other operad which is $\coprod_{\beta} \mathfrak{M}_{0,n+1,\beta}$ but the main ideas are
  the same.
Notice that  we set
$\overline{\mathcal{M}}_{0,1}=\overline{\mathcal{M}}_{0,2}:=\pt$ (with the usual definition they are
empty). By definition, we impose that
$\mathcal{O}(1)$ is the unit. 
For the operad $\mathcal{O}$, the following diagram is  cartesian (See below for an explanation).
\begin{displaymath}
  \xymatrix{\mathcal{O}(n)\times \mathcal{O}(m+1) \coprod_{\mathcal{O}(2)\times
      \mathcal{O}(n)\times\mathcal{O}(m)}\mathcal{O}(n+1)\times\mathcal{O}(m) \ar[r]
    \ar[d]^{q}&\mathcal{O}(n+m) \ar[d]^{p}\\\mathcal{O}(n)\times\mathcal{O}(m) \ar[r]^{\circ}& \mathcal{O}(n+m-1)}
\end{displaymath}
This property was called  of ``configuration type'' in \cite{Toen-operation-branes-2013}. Notice
that in the context of \cite[Definition 3.3.1.4]{Lurie-higher-algebra}, this notion was called ``coherent''.
As $p$ is flat, we need to prove that it is a cartesian diagram in the stack category.  Let
$(C_{1},x_{1}, \ldots ,x_{n+1})$ be in $\mathcal{O}(n)$
and $(C_{2},y_{1}, \ldots ,y_{m+1})$ be in $\mathcal{O}(m)$.
As
$\mathcal{O}(n+1)\to\mathcal{O}(n)$ is the universal curve, we deduce that
$q^{-1}(C_{1},C_{2})=C_{1}\coprod_{\pt}C_{2}$ which is exactly $C_{1}\circ C_{2}$. This implies that
the diagram above is cartesian.

Let us explain now the conclusion of this theorem. Notice  that $\mathcal{O}(2)=\overline{\mathcal{M}}_{0,3}$ is a point.
The statement means that we have a  morphism  of $\infty$-operad that is a family of morphisms
 \begin{displaymath}
 \varphi_{n}: \mathcal{O}(n)\to \underline{\Hom}^{\cocor}(\coprod_{i=1}^{n}\mathcal{O}(2), \mathcal{O}(2))
\end{displaymath}
where the morphism $(\varphi_{n})$ are compatible with the composition law. 
The $\underline{\Hom}$ is the same meaning that in \S \ref{sec:definition,End}. The category of
co-correspondances is in the same spirit as correspondance (See \S \ref{sec:categ,corr}) but with
the arrows in the other directions. 
The morphism $\varphi_{n}$ is given by the following diagram
\begin{align}\label{eq:16}
\xymatrix{  \mathcal{O}(n)\times \coprod_{i=1}^{n} \mathcal{O}(2)\ar[r]^-{\circ} \ar[rd]& \mathcal{O}(n+1) \ar[d] & \ar[l]_-{\circ'}\mathcal{O}(2)\times
  \mathcal{O}(n) \ar[ld]   \\ & \mathcal{O}(n)&}
\end{align}
Let explain this diagram with $\mathcal{O}(n)=\overline{\mathcal{M}}_{0,n+1}$. We have
\begin{enumerate}
\item The morphism $\mathcal{O}(n+1)\to \mathcal{O}(n)$ is to forget the last marked point. 
\item The map $\circ:\mathcal{O}(n)\times \coprod_{i=1}^{n}\mathcal{O}(2) \to \mathcal{O}(n+1)$ is
  given by
the $n$ possible gluings of the third marked point of $\mathcal{O}(2)=\overline{\mathcal{M}}_{0,3}$
with one of the marked points $x_{i}$ for $i\in \{1, \ldots ,n\}$ in $\mathcal{O}(n)$.
\item The $\circ'$ is the
gluing of last marked point $x_{n+1}$ of $\mathcal{O}(n)$ with the third of $\mathcal{O}(2)$.
\end{enumerate}

\end{example}

\subsection{Sketch of proof of Theorem \ref{thm,main}}
In this section, we explain how to apply Theorem \ref{thm:brane} to get our main theorem.

Here we take $\mathcal{O}(n)=\coprod_{\beta}\mathfrak{M}_{0,n+1,\beta}$.   This is an operad in
algebraic stack. One can check that all we
said before in the previous section for $\overline{\mathcal{M}}_{0,n+1}$ works as well for
$\coprod_{\beta}\mathfrak{M}_{0,n+1,\beta}$. 

 Let $X$ be a smooth projective variety.
We apply the functor $\mathbb{R}\Hom_{/\mathfrak{M}_{0,n+1,\beta}}(-,X\times
\mathfrak{M}_{0,n+1,\beta})$ to Theorem \ref{thm:brane}. As the source curve of a stable map  may
not be a stable curve, we need to use Theorem \ref{thm:brane} with an other operad than
$\overline{\mathcal{M}}_{0,n+1}$. That's why we use $\coprod_{\beta}\mathfrak{M}_{0,n+1,\beta}$.
We deduce the following result. 
\begin{thm}\label{thm:brane,Costello}
The variety $X$ is an $\mathfrak{M}^{\otimes}$-algebra in the category of correspondances in derived
stacks. The algebra structure is given by the
\begin{displaymath}
  \xymatrix{&\mathbb{R}\overline{\mathcal{M}}_{0,n+1}(X,\beta)\ar[rd] \ar[dl] & \\ X^{n}\times \mathfrak{M}_{0,n+1,\beta} && X \times \mathfrak{M}_{0,n+1,\beta}}
\end{displaymath}
\end{thm}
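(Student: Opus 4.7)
The plan is to apply the brane action theorem of Toën (Theorem \ref{thm:brane}) to the operad $\mathfrak{M}^{\otimes}$ with $\mathfrak{M}^{\otimes}(n)=\coprod_\beta \mathfrak{M}_{0,n+1,\beta}$, and then transport the resulting lax action from co-correspondences in algebraic stacks to correspondences in derived stacks via the internal mapping functor
\[
\Phi := \mathbb{R}\Hom_{\dst/\mathfrak{M}^{\otimes}(n)}\bigl(-,\, X\times \mathfrak{M}^{\otimes}(n)\bigr).
\]

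First, I would verify that $\mathfrak{M}^\otimes$ fits the hypotheses of Theorem \ref{thm:brane}. Setting $\mathfrak{M}^{\otimes}(0)=\mathfrak{M}^{\otimes}(1):=\pt$ makes arities $0$ and $1$ contractible and gives unitality. The coherence (``configuration type'') property, i.e.\ that the square
\[
\xymatrix{\mathfrak{M}^{\otimes}(n)\times \mathfrak{M}^{\otimes}(m+1) \coprod_{\mathfrak{M}^{\otimes}(2)\times \mathfrak{M}^{\otimes}(n)\times \mathfrak{M}^{\otimes}(m)} \mathfrak{M}^{\otimes}(n+1)\times \mathfrak{M}^{\otimes}(m) \ar[r]\ar[d] & \mathfrak{M}^{\otimes}(n+m) \ar[d]\\ \mathfrak{M}^{\otimes}(n)\times \mathfrak{M}^{\otimes}(m) \ar[r] & \mathfrak{M}^{\otimes}(n+m-1)}
\]
is cartesian, is proved exactly as for $\overline{\mathcal{M}}_{0,n+1}$ in the preceding discussion, the essential input being that $\mathfrak{M}^{\otimes}(n+1)\to \mathfrak{M}^{\otimes}(n)$ is the universal curve (Theorem \ref{thm,Costello}), so the fiber over $(C_1,C_2)$ is the nodal union $C_1\coprod_\pt C_2$, which is exactly the operadic composition. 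Theorem \ref{thm:brane} then yields a brane action, described fiberwise by the diagram
\[
\xymatrix{\mathfrak{M}^{\otimes}(n)\times \coprod_{i=1}^{n} \mathfrak{M}^{\otimes}(2)\ar[r]^-{\circ} \ar[rd] & \mathfrak{M}^{\otimes}(n+1) \ar[d] & \ar[l]_-{\circ'} \mathfrak{M}^{\otimes}(2)\times \mathfrak{M}^{\otimes}(n) \ar[ld]\\ & \mathfrak{M}^{\otimes}(n) &}
\]
exhibiting $\mathfrak{M}^{\otimes}(2)$ as a lax $\mathfrak{M}^{\otimes}$-algebra in co-correspondences.

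Next I would apply $\Phi$, which is contravariant and therefore converts co-correspondences into correspondences. The apex $\mathfrak{M}^{\otimes}(n+1)\to \mathfrak{M}^{\otimes}(n)$ of the above diagram is sent to the relative mapping stack $\mathbb{R}\Hom_{\dst/\mathfrak{M}^{\otimes}(n)}(\mathfrak{M}^{\otimes}(n+1), X\times \mathfrak{M}^{\otimes}(n))$, whose classical truncation contains $\overline{\mathcal{M}}_{0,n+1}(X,\beta)$ as the open substack cut out by the discrete condition that the degree of $f$ on each component $C_i$ equals the label $\beta_i$ (cf.\ \eqref{eq:9}). By the Schürg--Toën--Vezzosi open derived enhancement (Proposition following \eqref{eq:9}), this open substack lifts uniquely to $\mathbb{R}\overline{\mathcal{M}}_{0,n+1}(X,\beta)$, and the two legs of the co-correspondence become precisely the evaluation and projection legs of the correspondence stated in the theorem. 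Since $\Phi$ commutes with products and preserves cartesian squares of the coherent type verified in Step 1, the lax-operadic structure is transported to a lax algebra structure on $X$.

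The main obstacle is ensuring that $\Phi$, applied levelwise, produces a map of $\infty$-operads rather than just a collection of morphisms: one must check that the coherent cartesian squares defining the operadic composition in $\mathfrak{M}^{\otimes}$ are sent by $\Phi$ to the fibered products \eqref{eq:10} in derived stacks defining the composition in the correspondence operad. This in turn rests on the fact that for a cartesian square of stacks over $\mathfrak{M}^{\otimes}(n)$ with one leg the universal curve, the relative $\mathbb{R}\Hom$ into $X$ commutes with pullback, which is standard for representable maps but must be invoked carefully in the derived setting. This is precisely where Costello's $\mathfrak{M}_{0,n+1,\beta}$ rather than the prestable operad $\mathfrak{M}^{\pres}_{0,n+1}$ is indispensable, since the latter is not the universal curve over $\mathfrak{M}^{\pres}_{0,n}$ and the key cartesian squares would fail. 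Once this compatibility is established, open immersion stability under composition and additivity $\beta=\beta'+\beta''$ under gluing of stable maps yields the explicit correspondence diagram in the statement.
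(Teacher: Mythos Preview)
Your overall strategy---apply To\"en's brane action to $\mathfrak{M}^{\otimes}$ and then transport via $\Phi=\mathbb{R}\Hom_{/\mathfrak{M}^{\otimes}(n)}(-,X\times\mathfrak{M}^{\otimes}(n))$---is exactly the paper's approach. However, you gloss over three technical points that the paper singles out as genuine obstacles, and one of your intermediate claims is actually false as stated.

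First, your assertion that the coherence (``configuration type'') square for $\mathfrak{M}^{\otimes}$ is cartesian ``exactly as for $\overline{\mathcal{M}}_{0,n+1}$'' is not correct in the category of derived stacks. The pushout $C_1\coprod_{\pt}C_2$ that appears is a pushout of \emph{schemes} along closed immersions, and the inclusion of schemes into derived stacks does not preserve such pushouts: there is only a comparison map $\theta:C_1\coprod^{\dSt}_{\pt}C_2\to C_1\coprod^{\Sch}_{\pt}C_2$, not an equivalence. So $\mathfrak{M}^{\otimes}$ is \emph{not} coherent in the sense required by Theorem~\ref{thm:brane}. The paper's workaround is that the functor $\mathbb{R}\Hom(-,X)$ inverts $\theta$, so one can still run most of the brane-action argument and obtain the desired structure after applying $\Phi$; you need to say this rather than claim coherence outright.

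Second, Theorem~\ref{thm:brane} as stated applies to $\infty$-operads in \emph{spaces}, whereas $\mathfrak{M}^{\otimes}$ is an operad in algebraic (derived) stacks. The paper addresses this by passing to the dendroidal/Segal model (non-planar rooted trees) so that the brane construction can be enriched over derived stacks. You do not mention this step. Relatedly, your normalization $\mathfrak{M}^{\otimes}(0)=\mathfrak{M}^{\otimes}(1)=\pt$ is not quite what is needed: one must set $\mathfrak{M}^{\fake}_{0,1,\beta}=\mathfrak{M}^{\fake}_{0,2,\beta}=\emptyset$ for $\beta\neq 0$ and $=\pt$ for $\beta=0$, with the latter serving as the unit.

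Finally, you conclude with a ``lax algebra structure on $X$'', but the content of the theorem is that the $\mathfrak{M}^{\otimes}$-action is \emph{strong}: the $2$-morphisms $\widetilde{\alpha}$ of \eqref{eq:31} are equivalences. The geometric reason is that the forgetful map $q:\overline{\mathcal{M}}_{0,n+1}(X,\beta)\to\mathfrak{M}_{0,n+1,\beta}$ contracts no components, so a glued stable map over $\sigma\circ\tau\in\mathfrak{M}$ admits a \emph{unique} decomposition compatible with $\sigma$ and $\tau$. This is the whole point of working over $\mathfrak{M}^{\otimes}$ rather than $\overline{\mathcal{M}}^{\otimes}$, and it is what makes the subsequent passage to $\overline{\mathcal{M}}^{\otimes}$ the source of the laxness in Theorem~\ref{thm,main}. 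Your write-up should make this distinction explicit.
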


\begin{remark}
To apply Theorem \ref{thm:brane}, we need to do several modifications
  \begin{enumerate}
  \item Notice that in this statement, the action is strong that means that the lax morphisms are
    equivalences (See \S \ref{sec:lax,morphism}). The
geometrical reason is the following. We can repeat the construction of \S \ref{sec:lax,morphism}
replacing $\overline{\mathcal{M}}_{0,n+1}$ by $\mathfrak{M}_{0,n,\beta}$. The difference is that the
forgetting morphism  $q:\overline{\mathcal{M}}_{0,n+1}(X,\beta)\to \overline{\mathfrak{M}}_{0,n+1,\beta}$
 does not contract any component of the curve. 
More precisely, let  $\sigma \in
\mathfrak{M}_{0,a+1,\beta}$ and $\tau \in \mathfrak{M}_{0,b+1,\beta'}$. Denote by
\begin{displaymath}
\mathbb{R}\overline{\mathcal{M}}^{\sigma}_{0,a+1}(X,\beta')=q^{-1}(\sigma).  
\end{displaymath}
 Take care that in \S
\ref{sec:lax,morphism}, we use
$\mathbb{R}\overline{\mathcal{M}}^{\sigma}_{0,a+1}(X,\beta')=p^{-1}(\sigma)$ where $p:\overline{\mathcal{M}}_{0,n+1}(X,\beta)\to \overline{\mathcal{M}}_{0,n+1}$.
Writing the same kind of
diagram as \eqref{eq:10} we get the corresponding $\alpha$ given by
  \begin{align}
  \label{eq:31}
  \widetilde{\alpha} :\mathbb{R}\overline{\mathcal{M}}^{\sigma}_{0,a+1}(X,\beta')
               \times_{X}\mathbb{R}\overline{\mathcal{M}}^{\tau}_{0,b+1}(X,\beta'') \to\mathbb{R}\overline{\mathcal{M}}^{\sigma\circ\tau}_{0,a+b+1}(X,\beta)
\end{align}
which is now an isomorphism because from the glued curve, there is a unique possibility to cut it
with respect to $\sigma$ and $\tau$.

  \item First, Theorem \ref{thm:brane} apply only to operads in spaces and here we have operads in
    derived stacks. This can be done using non-planar rooted trees and dendroidal sets. More
    precisely, one can enrich $\infty$-operads using Segal functor from the nerve of $\Omega^{op}$
    to derived stacks. Thanks to the work of \cite{1606.03826} and \cite{1305.3658} these two
    definitions coincide on topological spaces.
  \item Second, the condition $\mathcal{O}(0)=\mathcal{O}(1)=\pt$ is not satisfied by
    $\mathfrak{M}_{0,n,\beta}$. So we impose that for any $\beta\neq 0$,
    $\mathfrak{M}^{\fake}_{0,1,\beta}=\mathfrak{M}^{\fake}_{0,2,\beta}=\emptyset$  and that
    $\mathfrak{M}^{\fake}_{0,1,0}=\mathfrak{M}^{\fake}_{0,2,0}=\pt$ is with
    $\mathfrak{M}^{\fake}_{0,2,0}$ being the neutral element.
  \item An other issue is that $\mathfrak{M}_{0,n,\beta}$ is not a coherent operad because the
    inclusion of schemes in derived stacks does not commute with pushouts even along closed
    immersion. We only have a canonical morphism
    \begin{displaymath}
      \theta: C_{1}\coprod^{dst}_{\pt}C_{2} \to
    C_{1}\coprod^{sch}_{\pt}C_{2}
    \end{displaymath}
 Nevertheless, most of the proof of Theorem \ref{thm:brane} is
    still valid and we know that the functor $\mathbb{R}\Hom (-,X)$ will see $\theta$ as an equivalence.
  \end{enumerate}
\end{remark}

The next step in order to prove Theorem \ref{thm,main} is to understand the morphism of operads
\begin{displaymath}
  \coprod_{\beta}\mathfrak{M}_{0,n+1,\beta} \to \overline{\mathcal{M}}_{0,n+1}.
\end{displaymath}
Embedding this morphism in the $\infty$-operads, it turns out that this morphism is a lax
morphism of operads. This is the reason why the final action in Theorem \ref{thm,main} is lax.


\section{Comparison with other definition}

\subsection{Quantum product in cohomology and in $G_{0}$-theory}

In this section, we review the definition of the quantum product in cohomology and in $G_{0}$-theory.
Recall that $X$ is a smooth projective variety. Givental-Lee defined in \cite{MR2040281} the
Gromov-Witten invariants in $G_{0}$-theory. For that they defined a virtual structure sheaf, denoted by $\mathcal{O}^{\vir}_{\overline{\mathcal{M}}_{g,n}(X,\beta)}$, on the moduli
space of stable maps. Recall the morphism $e_{i}:\overline{\mathcal{M}}_{g,n}(X,\beta)\to X$ are the
evaluation morphism at the $i$-th marked point.
For any $E_{1}, \ldots ,E_{n}\in G_0(X)$, the Gromov-Witten invariants in $G_{0}$-theory are
\begin{displaymath}
  \langle E_{1}, \ldots ,E_{n} \rangle^{G_{0}}_{0,n,\beta}:=\chi\left(\bigotimes_{i=1}^{n} e_{i}^{*}E_{i}
  \otimes\mathcal{O}^{\vir}_{\overline{\mathcal{M}}_{0,n}(X,\beta)}  \right) \in \mathbb{Z}
\end{displaymath}
where $\chi(.)$ is the Euler characteristic.

Let $\NE(X)$ be  the Neron-Severi group of $X$ that is the subset of $H_{2}(X,\mathbb{Z})$ generated
by image of curves in $X$.
\begin{defn}
Let $\gamma_{1},\gamma_{2} \in H^{*}(X)$. The quantum product in $H^{*}(X)$ is defined by
\begin{align}\label{eq:21}
  \gamma_{1}\bullet^{H^{*}} \gamma_{2}=\sum_{\beta \in \NE(X)} Q^{\beta}{\ev_{3}}_{*}\left(\ev_{1}^{*}\gamma_{1}\cup
    \ev_{2}^{*}\gamma_{2}\cap [\overline{\mathcal{M}}_{0,3}(X,\beta)]^{\vir}\right). 
\end{align}
\end{defn}
One can see this product as a formal power series in $Q$. Hence, the quantum product lies in
$H^{*}(X)\otimes \Lambda$ where $\Lambda$ is the Novikov ring i.e., it is the algebra generated by
$Q^{\beta}$ for $\beta \in \NE(X)$.

We will recall the definition of the virtual class $\left[\overline{\mathcal{M}}_{0,n}(X,\beta)\right]^{\vir}$ (defined by Behrend-Fantechi) and the virtual
 sheaf $\mathcal{O}^{\vir}_{\overline{\mathcal{M}}_{g,n}(X,\beta)}$ (defined by Lee \cite{MR2040281})   in \S \ref{sec:virtual-object-from-1} and \S \ref{sec:virtual-object-from}.

In $G_{0}$-theory, we define the quantum product with the following formula.
\begin{defn}\label{eq:22}
  Let $F_{1},F_{2} \in G_{0}(X)$. The quantum product in $G_{0}$-theory is defined to be the
  element in $G_{0}(X)\otimes \Lambda$
 \begin{displaymath}
 \resizebox{1\linewidth}{!}{
   \begin{minipage}{\linewidth}
 \begin{align*}
  F_{1}\bullet^{G_{0}} F_{2} =\sum_{\beta \in \NE(X)} Q^{\beta}{\ev_{3}}_{*}\left(\ev_{1}^{*}F_{1}\otimes
     \ev_{2}^{*}F_{2}\otimes \sum_{r\in\mathbb{N}}\sum_{\stackrel{
  (\beta_{0}, \ldots ,\beta_{r})\mid}{\sum\beta_{i}=\beta}}
 (-1)^{r}\mathcal{O}^{\vir}_{\overline{\mathcal{M}}_{0,3}(X,\beta_{0})}\otimes
 \mathcal{O}^{\vir}_{\overline{\mathcal{M}}_{0,2}(X,\beta_{1})} \cdots \otimes
 \mathcal{O}^{\vir}_{\overline{\mathcal{M}}_{0,2}(X,\beta_{r})}\right) 
 \end{align*}
 \end{minipage}}
\end{displaymath}
\end{defn}

The term $r=0$ in the formula in Definition \ref{eq:22} is of the same shape
\eqref{eq:21}. One has to understand the other terms, i.e. $r>0$, are ``corrections
terms''.

\subsection{About the associativity}
\label{sec:about-associativity}

The most important property of these two products is the associativity. 
It is 
proved by Kontsevich-Manin \cite{MR1369420} (See also \cite{MR1492534}) that the quantum product in
cohomology is associative. Notice that the key formula for the associativity is given in Theorem
\ref{thm:gluing,virt,coho} which states that virtual classes behave with respect to the morphisms
$\alpha$'s and the gluing morphisms. Recall that the morphisms $\alpha$'s are the one that appear in the lax action \eqref{eq:12}. 

Later, when Givental and Lee (See \cite{MR2040281}) try to define a quantum product in $G_{0}$-theory they want an
associative product. If one put the same kind of formula as in \eqref{eq:21}, the product is not
associative. Hence the key observation of Givental and Lee is Theorem \ref{thm:gluing,sheaf,virt}
which is the analogue of Theorem \ref{thm:gluing,virt,coho} in $G_{0}$-theory that is how the
virtual sheaves behave with respect to the morphisms $\alpha$'s and the gluing morphisms.

Our contribution to this question is Theorem \ref{thm:colim} which is the geometric explanation that explains
the two Theorems \ref{thm:gluing,sheaf,virt} and \ref{thm:gluing,sheaf,virt}.

Notice that Givental-Lee packed the complicated formula of \ref{eq:22} in a very clever way. Notice
that $\overline{\mathcal{M}}_{0,2}(X,\beta)=\overline{\mathcal{M}}_{0,2} \times X$ is empty if
$\beta=0$. As before put $\overline{\mathcal{M}}_{0,2}=\pt$. Then we put 
\begin{align}
  \label{eq:23}
\mathcal{O}^{\vir}_{\overline{\mathcal{M}}_{0,2}}:= \mathcal{O}_{X} + \sum_{\stackrel{\beta\in
  \NE(X)}{\beta\neq 0}}
  Q^{\beta}\mathcal{O}^{\vir}_{\overline{\mathcal{M}}_{0,2}(X,\beta)} \in G_{0}(X)\otimes \Lambda
\end{align}
Let invert the Formula above formally in $G_{0}(X)\otimes \Lambda$. The terms in front of
$Q^{\beta}$ is 
\begin{align}\label{eq:24}
  \sum_{r\in\mathbb{N}}\sum_{\stackrel{
  (\beta_{0}, \ldots ,\beta_{r})\mid}{\sum\beta_{i}=\beta}}
 (-1)^{r}\mathcal{O}^{\vir}_{\overline{\mathcal{M}}_{0,2}(X,\beta_{0})}\otimes
 \mathcal{O}^{\vir}_{\overline{\mathcal{M}}_{0,2}(X,\beta_{1})} \cdots \otimes
 \mathcal{O}^{\vir}_{\overline{\mathcal{M}}_{0,2}(X,\beta_{r})}
\end{align}
The Formula \eqref{eq:23} and \eqref{eq:24} are the reason of the ``metric'' (See Formula (16) in \cite{MR2040281}
for more details)
because one can express in a compact form the Formula \eqref{eq:22} using the inverse of the metric.

\subsection{Key diagram}

Let us consider the following homotopical fiber product. Let $n_{1},n_{2}\in\mathbb{N}_{\geq 2}$. Put $n=n_{1}+n_{2}$.
\begin{align}\label{eq:key,diag}
  \xymatrix{ Z_{\beta}\ar[r]\ar[d]& \mathbb{R}\overline{\mathcal{M}}_{0,n}(X,\beta) \ar[d]^{p}\\
  \overline{\mathcal{M}}_{0,n_{1}+1} \times \overline{\mathcal{M}}_{0,n_{2}+1} \ar[r]^-{g}& \overline{\mathcal{M}}_{0,n}}
\end{align}
The fiber over a point $(\sigma,\tau)$ is denoted by
$\overline{\mathcal{M}}^{\sigma\circ\tau}(X,\beta)$ in \S~\ref{sec:lax,morphism} that is stable
maps where the curve stabilise to $\sigma\circ \tau$. In Figure \ref{fig:tree,p1}, we have an
example of a fiber over $\sigma\circ \tau$ where we have a tree of $\mathbb{P}^{1}$ in the middle.

\begin{figure}[ht]
\centering
  \begin{tikzpicture}[scale=0.8]
    \ThreeCurveVex{0}{0}{2.25}{3.5}{firstCurve}
    \ZeroCurveVex{0}{-2.5}{.1}{3}{secondCurve}
  \ZeroCurveVex{0}{-4.5}{.1}{3}{fourthCurve}
\ZeroCurveVex{0}{-6.5}{.1}{3}{fifthCurve}
\TwoCurveCave{0}{-6}{2}{-3}{thirdCurve}

    \ThreeCurveVex{5}{-3}{2.25}{3.5}{firstCurve}
\TwoCurveCave{5}{-2.5}{2}{-3}{thirdCurve}
\draw (-1,2) node {$\sigma,\beta_{0}$};
\draw (-1,-1) node {$C_{1},\beta_{1}$};
\draw (-1,-3) node {$C_{2},\beta_{2}$};
\draw (-1,-5) node {$C_{3},\beta_{3}$};
\draw (-1,-7.5) node {$\tau,\beta_{4}$};

\draw [>=latex,->,black] (2,-2.7)-- (4,-2.7) node [above left, midway] {$p$};
\draw (1.7,2.6) node {$x_{1}$}; 
\draw (1.3,1.8) node {$x_{2}$}; 
\draw (0.7,1) node {$x_{3}$}; 
\draw (0.7,-6.9) node {$x_{4}$}; 
\draw (1.2,-7.8) node {$x_{5}$}; 

\draw (6.7,-.2) node {$x_{1}$}; 
\draw (6.5,-1) node {$x_{2}$}; 
\draw (6,-1.7) node {$x_{3}$}; 
\draw (5.7,-3.6) node {$x_{4}$}; 
\draw (6.3,-4.5) node {$x_{5}$}; 

\draw (7.9,-2.7) node {$\sigma\circ\tau \in \overline{\mathcal{M}}_{0,5}$};

  \end{tikzpicture}
\caption{Example of a stable map above $\sigma\circ\tau$ with a tree of $\mathbb{P}^{1}$ in the
  middle. The tree $C_{1}\circ C_{2} \circ C_{3}$ is contracting by $p$ to the node of $\sigma\circ \tau$.} \label{fig:tree,p1}
\end{figure}
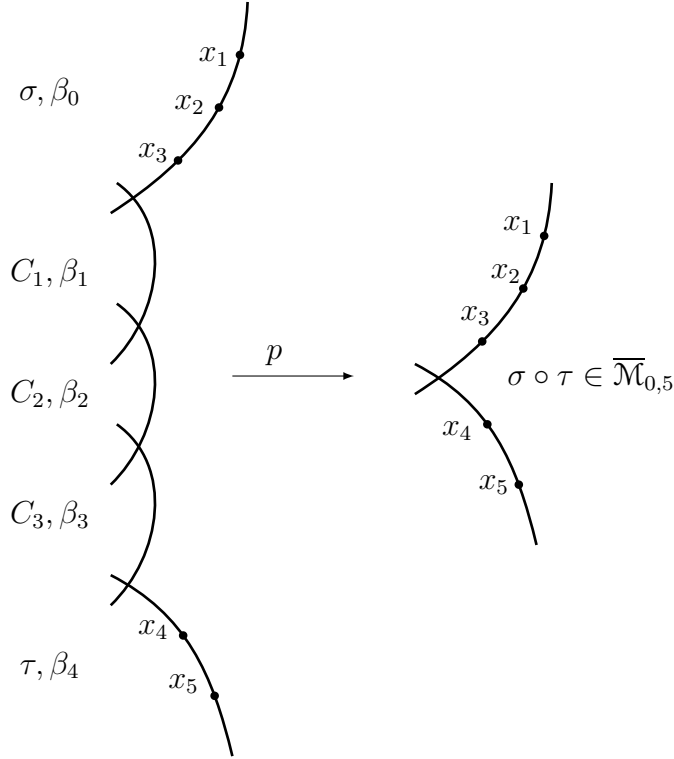

Using the universal property of the fiber product we get the morphism (see \eqref{eq:12})
\begin{align}\label{eq:alpha,asso}
  \alpha
  :\coprod_{\beta'+\beta''=\beta}\mathbb{R}\overline{\mathcal{M}}_{0,n_{1}+1}(X,\beta')\times_{X}\mathbb{R}\overline{\mathcal{M}}_{0,n_{2}+1}(X,\beta'')
  \to Z_{\beta}
\end{align}
where the left hand side is defined by the following homotopical fiber product
\begin{align}
  \label{eq:13}
\xymatrix{\mathbb{R}\overline{\mathcal{M}}_{0,n_{1}+1}(X,\beta')\times_{X}\mathbb{R}\overline{\mathcal{M}}_{0,n_{2}+1}(X,\beta'')
  \ar[r] \ar[d]
  &\mathbb{R}\overline{\mathcal{M}}_{0,n_{1}+1}(X,\beta')\times\mathbb{R}\overline{\mathcal{M}}_{0,n_{2}+1}(X,\beta'') \ar[d]^-{e_{1},e_{n_{2}+1}}\\
  X \ar[r]^-{\Delta} & X\times X} 
\end{align}

The heart of the associativity of the quantum products in cohomology (see Theorem
\ref{thm:gluing,sheaf,virt} for $G_{0}$-theory) is the following statement.
\begin{thm}[Theorem 5.2 \cite{MR1467172}]\label{thm:gluing,virt,coho}
 We have the following equality in the Chow ring of the truncation of $Z_{\beta}$.
    \begin{align}
      \label{eq:14}
      \alpha_{*}\left(\sum_{\beta'+\beta''=\beta} \Delta^{!}\left([\overline{\mathcal{M}}_{0,n_{1}+1}(X,\beta')]^{\vir}\otimes[\overline{\mathcal{M}}_{0,n_{2}+1}(X,\beta'')]^{\vir})\right)\right)=g^{!}[\overline{\mathcal{M}}_{0,n}(X,\beta)]^{\vir}
    \end{align}
\end{thm}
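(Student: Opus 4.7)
The plan is to deduce \eqref{eq:14} by lifting everything to derived algebraic geometry, where both sides are manifestations of the same derived fundamental class.

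First, I would enhance the Cartesian square \eqref{eq:key,diag} to a derived Cartesian square in $\dSt$ by replacing $Z_\beta$ with its derived enhancement $\mathbb{R}Z_\beta := \mathbb{R}\overline{\mathcal{M}}_{0,n}(X,\beta) \times^{\mathbb{R}}_{\overline{\mathcal{M}}_{0,n}} (\overline{\mathcal{M}}_{0,n_1+1} \times \overline{\mathcal{M}}_{0,n_2+1})$. Since $g$ is a regular closed immersion between smooth Deligne-Mumford stacks, the Gysin map $g^!$ on the truncation agrees with the derived pullback of virtual classes; in particular $g^{!}[\mathbb{R}\overline{\mathcal{M}}_{0,n}(X,\beta)]^{\vir}$ is the virtual class $[\mathbb{R}Z_\beta]^{\vir}$ coming from the quasi-smooth structure induced on $\mathbb{R}Z_\beta$. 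This is the functoriality of virtual classes under derived pullback (Behrend-Fantechi, or in derived language, the compatibility of the Euler class of the shifted cotangent complex with derived base change).

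On the left-hand side, I would similarly lift each summand to its derived analogue $\mathbb{R}\overline{\mathcal{M}}_{0,n_{1}+1}(X,\beta')\times^{\mathbb{R}}_{X}\mathbb{R}\overline{\mathcal{M}}_{0,n_{2}+1}(X,\beta'')$ with its natural quasi-smooth structure, and by the same principle $\Delta^{!}([\cdot]^{\vir}\otimes[\cdot]^{\vir})$ is precisely the virtual class of this derived fiber product. The crux is then to show that $\alpha$ lifts to an equivalence of derived stacks
\begin{align*}
\coprod_{\beta'+\beta''=\beta}\mathbb{R}\overline{\mathcal{M}}_{0,n_{1}+1}(X,\beta')\times^{\mathbb{R}}_{X}\mathbb{R}\overline{\mathcal{M}}_{0,n_{2}+1}(X,\beta'')\; \xrightarrow{\ \sim\ }\; \mathbb{R}Z_{\beta}.
\end{align*}
This is the ``strong'' (non-lax) form of the brane action at the level of $\coprod_\beta \mathfrak{M}_{0,n+1,\beta}$ highlighted in Theorem \ref{thm:brane,Costello} and in \eqref{eq:31}: on each fiber $(\sigma,\tau)$, the node joining $\sigma$ and $\tau$ is already part of the data of $\sigma\circ\tau$, so the splitting of a glued curve into its two pieces and of the degree into $\beta'+\beta''$ is uniquely determined. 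At the level of cotangent complexes, the derived gluing identification for the structure sheaf of a nodal curve translates, via $\mathbb{R}\Hom(-,X)$ along the universal curve, into the required equivalence on the tangent complexes of the two derived moduli spaces.

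The main obstacle, and the step requiring real care, is this derived equivalence of $\alpha$: classically $\alpha$ is merely surjective, not injective (as illustrated in Figure \ref{fig:geo}), so the statement genuinely uses that we are fibering over the boundary stratum and decomposing by $(\beta',\beta'')$. Once the equivalence is established, both sides of \eqref{eq:14} are expressed as $\alpha_*$ of a common derived virtual class, and the desired equality in the Chow ring of the truncation of $Z_\beta$ is then immediate. I expect this last derived-equivalence step to be essentially the content of Theorem \ref{thm:colim} announced in the introduction, which is exactly the unified geometric input behind both Theorem \ref{thm:gluing,virt,coho} in cohomology and Theorem \ref{thm:gluing,sheaf,virt} in $G_0$-theory.
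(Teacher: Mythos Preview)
The paper does not prove this theorem; it is quoted from Behrend \cite{MR1467172}, and Remark~\ref{rem,orientation,coho} explains that it is a formal combination of the orientation axioms (cutting edges and isogenies) established there. So there is no ``paper's proof'' to compare with beyond that attribution.

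Your proposed argument, however, contains a genuine error at its central step. The map
\[
\alpha:\ \coprod_{\beta'+\beta''=\beta}\mathbb{R}\overline{\mathcal{M}}_{0,n_{1}+1}(X,\beta')\times^{\mathbb{R}}_{X}\mathbb{R}\overline{\mathcal{M}}_{0,n_{2}+1}(X,\beta'')\ \longrightarrow\ \mathbb{R}Z_{\beta}
\]
is \emph{not} an equivalence of derived stacks, and Theorem~\ref{thm:colim} does not say that it is. What Theorem~\ref{thm:colim} asserts is that $Z_\beta$ is the colimit of the full semi-simplicial object $X_{\bullet,\beta}$, whose $r$-th level records chains of $r$ intermediate $\overline{\mathcal{M}}_{0,2}(X,\beta_i)$'s inserted at the node; the source of $\alpha$ is only the bottom level of this diagram. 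This is precisely why the $G_0$-theory statement (Theorem~\ref{thm:gluing,sheaf,virt}) involves an alternating sum over all $r$, not just the single $\alpha_*$ term. You have also conflated two different situations: the ``strong'' action \eqref{eq:31} is strong because the base is $\mathfrak{M}_{0,\bullet,\beta}$, where the forgetful map contracts nothing; once you pass to $\overline{\mathcal{M}}_{0,\bullet}$ as in \eqref{eq:key,diag}, trees of $\mathbb{P}^1$'s can appear over the node (Figure~\ref{fig:tree,p1}) and $\alpha$ fails to be an isomorphism, derived or otherwise. Your sentence ``the statement genuinely uses that we are fibering over the boundary stratum'' does not address this: the boundary stratum in $\overline{\mathcal{M}}_{0,n}$ does not see those contracted components.

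The reason \eqref{eq:14} is nonetheless simpler than its $G_0$-analogue is a dimension argument that your proposal does not supply: the higher $X_{r,\beta}$ for $r\geq 1$ have strictly smaller virtual dimension, so their contributions vanish in the Chow group in the expected degree, whereas in $G_0$ there is no grading to kill them. A corrected derived approach would have to make this explicit (or, equivalently, invoke Behrend's isogenies axiom, which is exactly this birational/dimension statement for virtual classes).
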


\begin{remark}\label{rem,orientation,coho}
  In \cite{MR1431140}, Behrend proves that the virtual class satisfies five properties, called
  \textit{orientation} (see \S 7 in \cite{MR1412436}), namely: mapping to a point, products, cutting
  edges, forgetting tails and isogenies. The formula \eqref{eq:14} is a combination of cutting tails
  and isogenies.
\end{remark}

The analogue statement in $G_{0}$-theory need a bit more of notations. We denote
\begin{displaymath}
  \mathbb{R}\overline{\mathcal{M}}_{g,n}(X,\beta):=\mathbb{R}X_{g,n,\beta}.
\end{displaymath}
  Let $r,n_{1},n_{2}$ be
in $\mathbb{N}$ with $n_{1}+n_{2}=n$ and let $\beta$ be in $\NE(X)$. Let
$\underline{\beta}=(\beta_{0}, \ldots ,\beta_{r})$ be a partition of $\beta$. Notice that there is
only a finite number of partition.


We denote by 
\begin{displaymath}
\resizebox{1\linewidth}{!}{
  \begin{minipage}{\linewidth}
\begin{align*}
  \mathbb{R}{X}_{0,n_{1},n_{2},\underline{\beta}}:=\mathbb{R}X_{0,n_{1}+1,\beta_{0}}\times_{X}\mathbb{R}X_{0,2,\beta_{1}}\times_{X}\cdots\times_{X}\mathbb{R}X_{0,2,\beta_{r-1}}\times_{X}\mathbb{R}X_{0,n_{2}+1,\beta_{r}}
\end{align*}
\end{minipage}}
\end{displaymath}

We generalize the situation of \eqref{eq:14} by the following homotopical cartesian diagram
\begin{align}
  \label{eq:13,diag,r}
\xymatrix{ \mathbb{R}X_{0,n_{1},n_{2},\underline{\beta}}
  \ar[r] \ar[d]
  &\mathbb{R}X_{0,n_{1}+1,\beta_{0}}\times\left(\prod_{k=1}^{r-1}\mathbb{R}X_{0,2,\beta_{i}}\right)\times\mathbb{R}X_{0,n_{2}+1,\beta_{r}} \ar[d]\\
  X^{r} \ar[r]^-{\Delta^{r}} & (X\times X)^{r}} 
\end{align}

Gluing all
the stable maps and using the universal property of $Z_{\beta}$, we have a morphism
\begin{align}\label{eq:alpha,r}
  \alpha_{r}: \coprod_{\beta=\sum_{i=0}^{r}\beta_{i}}
  \mathbb{R}X_{0,n_{1},n_{2},\underline{\beta}} \to Z_{\beta}
\end{align}
Notice that $\alpha_{1}$ is the $\alpha$ of  \eqref{eq:12}

Finally, we can state the analogue of Theorem \ref{thm:gluing,virt,coho} in $G_{0}$-theory.
\begin{thm}[Proposition 11 in \cite{MR2040281}]\label{thm:gluing,sheaf,virt}
  We have the following equality in the $G_{0}$-group of the truncation of $Z_{\beta}$.

\resizebox{1\linewidth}{!}{
  \begin{minipage}{\linewidth}   \begin{align*}
      \label{eq:15}
     \sum_{r\in\mathbb{N}} (-1)^{r}{\alpha_{r}}_{*}\left(\sum_{\sum_{i=0}^{r}\beta_{i}=\beta}
      (\Delta^{r})^{!}\left(\mathcal{O}^{\vir}_{X_{0,n_{1}+1\beta_{0}}}\otimes\mathcal{O}^{\vir}_{X_{0,2,\beta_{1}}}\otimes\cdots
      \otimes \mathcal{O}^{\vir}_{X_{0,2,\beta_{r-1}}}\otimes
      \mathcal{O}^{\vir}_{X_{0,n_{2}+1,\beta_{r}}}\right)\right)=g^{!}\mathcal{O}^{\vir}_{X_{0,n,\beta}}
    \end{align*}\end{minipage}}
\end{thm}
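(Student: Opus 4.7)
The plan is to lift the entire identity to derived algebraic geometry and exploit the identification of the virtual structure sheaf with (the pushforward of) the structure sheaf of the derived enhancement. Write $j: \overline{\mathcal{M}}_{0,n}(X,\beta) \hookrightarrow \mathbb{R}\overline{\mathcal{M}}_{0,n}(X,\beta)$ for the canonical closed immersion of the truncation into the Sch\"urg--To\"en--Vezzosi derived moduli stack. Then $\mathcal{O}^{\vir}_{\overline{\mathcal{M}}_{0,n}(X,\beta)} \simeq j_*\mathcal{O}_{\mathbb{R}\overline{\mathcal{M}}_{0,n}(X,\beta)}$, and under this identification the refined Gysin maps $g^!$ and $(\Delta^r)^!$ translate into ordinary derived pullbacks of structure sheaves. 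In this way \eqref{eq:15} becomes an identity of structure sheaves on a derived stack, which is the setting in which the lax algebra of Theorem \ref{thm,main} and the brane action of Theorem \ref{thm:brane,Costello} actually carry information.

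First I would apply Theorem \ref{thm:brane,Costello} to identify, for each partition $\underline{\beta}=(\beta_0,\ldots,\beta_r)$ of $\beta$, the iterated homotopy fibre product
\begin{equation*}
\mathbb{R}X_{0,n_1+1,\beta_0}\times_X^h \mathbb{R}X_{0,2,\beta_1}\times_X^h\cdots\times_X^h \mathbb{R}X_{0,n_2+1,\beta_r}
\end{equation*}
with a natural derived substack $\mathbb{R}Z_\beta^{(\underline{\beta})}$ of the homotopy enhancement $\mathbb{R}Z_\beta$ of the square \eqref{eq:key,diag}. Geometrically $\mathbb{R}Z_\beta^{(\underline{\beta})}$ parametrizes stable maps whose source, after stabilization by $p$, carries an ordered chain of $r$ rational components contracted to the node of $\sigma\circ\tau$, with prescribed degrees along that chain. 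The brane action applied to the operad $\coprod_\beta\mathfrak{M}_{0,n+1,\beta}$ through the functor $\mathbb{R}\mathrm{Hom}_{/\mathfrak{M}_{0,n+1,\beta}}(-,X\times\mathfrak{M}_{0,n+1,\beta})$ both produces the maps $\mathbb{R}\alpha_r$ and guarantees that they are proper and quasi-smooth, so that derived pushforward preserves bounded coherent complexes.

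Next I would invoke derived base change along the homotopy cartesian enhancement of \eqref{eq:key,diag} in $\dSt$ to deduce $g^!\mathcal{O}^{\vir}_{\overline{\mathcal{M}}_{0,n}(X,\beta)} \simeq j_*\mathcal{O}_{\mathbb{R}Z_\beta}$. The theorem then reduces to the single intrinsic identity in $G_0(\mathbb{R}Z_\beta)$
\begin{equation*}
\mathcal{O}_{\mathbb{R}Z_\beta} \;=\; \sum_{r\geq 0}(-1)^r\sum_{\beta_0+\cdots+\beta_r=\beta}(\mathbb{R}\alpha_r)_*\,\mathcal{O}_{\mathbb{R}Z_\beta^{(\underline{\beta})}}.
\end{equation*}
This is an inclusion--exclusion reflecting precisely the non-injectivity of $\alpha$ discussed in \S\ref{sec:lax,morphism}: a contracted chain of length $r+1$ admits $r+1$ distinct splittings. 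The natural mechanism producing such an alternating sum is a Koszul-type resolution of $\mathcal{O}_{\mathbb{R}Z_\beta}$ whose terms are the structure sheaves of the deeper strata, built by successively smoothing the nodes of the contracted chain.

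The hardest step will be making this Koszul resolution precise. One has to verify that the chain-length stratification of $\mathbb{R}Z_\beta$ is regularly embedded in the derived sense, that the conormal bundles of the successive smoothings assemble into the desired alternating sum, and that the resulting local computation is compatible with the global gluing maps supplied by the brane action. Once that Koszul complex is established, applying $j_*$ and invoking the projection formula for the proper quasi-smooth maps $\mathbb{R}\alpha_r$ returns \eqref{eq:15} after summing over the decompositions $\beta=\sum\beta_i$.
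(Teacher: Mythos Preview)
Your high–level strategy agrees with what the paper actually does for its DAG version of this identity (the second formula in Theorem~\ref{thm:orientation}(5) together with the Corollary following it): lift both sides to the derived enhancement, use derived base change on the square~\eqref{eq:key,diag} to rewrite $g^{!}\mathcal{O}^{\vir}$ as the structure sheaf of $Z_\beta$, and then reduce everything to an identity of structure sheaves on $Z_\beta$. That much is correct and matches the paper.

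Where you diverge is in the ``hardest step''. You propose to obtain the alternating sum via a Koszul-type resolution built from a chain-length stratification of $Z_\beta$, checking that successive strata are derived-regularly embedded and assembling their conormal bundles. The paper does \emph{not} do this. Instead it proves a single structural statement, Theorem~\ref{thm:colim}: the semi-simplicial diagram $X_{\bullet,\beta}$ has colimit $Z_\beta$ in derived Deligne--Mumford stacks. Once this is known, the alternating sum is automatic, because the structure sheaf of a colimit of a semi-simplicial object is computed by the associated \v{C}ech-type complex, giving precisely $\sum_r(-1)^r(\alpha_r)_*\mathcal{O}_{X_{r,\beta}}$. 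The proof of Theorem~\ref{thm:colim} uses $h$-descent for perfect complexes and Tannakian duality, not any regularity of stratifications. Your Koszul route would require verifying derived regular embeddings of the chain-length strata inside $Z_\beta$, which is neither obvious nor needed; the colimit formulation bypasses that entirely and is the cleaner mechanism.

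One further point: the theorem as stated concerns Lee's virtual sheaf $\mathcal{O}^{\vir}$ (defined via the perfect obstruction theory), not the DAG sheaf. Your argument proves the identity for $\mathcal{O}^{\vir,\DAG}$; to conclude for $\mathcal{O}^{\vir}$ you still need the comparison $\mathcal{O}^{\vir,\DAG}=\mathcal{O}^{\vir,\POT}$ of Theorem~\ref{thm,O,pot=Dag}, which you do not mention.
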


\begin{remark}\label{rem:orientation,Ktheory}
  \begin{enumerate}
  \item   Comparing Theorem \ref{thm:gluing,virt,coho} with Theorem \ref{thm:gluing,sheaf,virt}, we see that
  the formulas are more complicated in $G_{0}$-theory. We see that moduli spaces
  of the kind $\overline{\mathcal{M}}_{0,2}(X,\beta)$ appears in $G_{0}$-theory. This corresponds to stable curve
  with tree of $\mathbb{P}^{1}$ in the middle (see Figure \ref{fig:tree,p1}). Notice that this is
  the same reason why the action of the main Theorem \ref{thm,main} is lax.
\item Also in $G_{0}$-theory, there are 5 axioms, called orientation (see Remark \ref{rem,orientation,coho}), for the virtual sheaf
  $\mathcal{O}^{\vir}_{\overline{\mathcal{M}}_{g,n}(X,\beta)}$. They are proved by Lee in \cite{MR2040281}.
  \end{enumerate}
\end{remark}

Denote by
\begin{displaymath}
X_{r,\beta}:=  \coprod_{\sum\beta_{i}=\beta}\mathbb{R}X_{0,n_{1}+1,\beta_{0}}\times_{X}\mathbb{R}X_{0,2,\beta_{1}}\times_{X}\cdots
\times_{X} \mathbb{R}X_{0,2,\beta_{r-1}}\times_{X}\mathbb{R}X_{0,n_{2}+1,\beta_{r}}
\end{displaymath}
We deduce a semi-simplicial object in the category of derived stacks
where the $r+1$-morphisms from $X_{r+1,\beta}\to X_{r,\beta}$ are given by gluing two stable maps together.
We have 

 \begin{tikzpicture}
\matrix (m) [matrix of math nodes, row sep=3em, column sep=3.2pc,
  text width=2pc, text height=1pc, text depth=.5pc] { 
    X_{0,\beta} & X_{1,\beta} & X_{2,\beta} & \cdots \\
  }; 

\path[<-] 
(m-1-1.15) edge node[above] {}  (m-1-2.165)
(m-1-1.-15) edge (m-1-2.-165);
\path[<-]
(m-1-2.28) edge node[above] {} (m-1-3.152)
(m-1-2) edge (m-1-3)
(m-1-2.-28) edge (m-1-3.-152);
\path[<-]
(m-1-3.37) edge node (t) {} node[above] {} (m-1-4.143)
(m-1-3.-37) edge node (b) {} (m-1-4.-143);


\path[dotted]
(t) edge (b);
\end{tikzpicture}

 Moreover, for any $r$ we have a morphism of gluing all stable maps from $X_{r,\beta}\to Z_{\beta}$ hence a morphism $\colim
X_{\bullet,\beta} \to Z_{\beta}$.

The following theorem was not proved in \cite{2015arXiv150502964M}. We will prove it in the appendix.
\begin{thm}\label{thm:colim}
  We have that $\colim
X_{\bullet,\beta} =Z_{\beta}$.
\end{thm}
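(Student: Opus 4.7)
The morphisms $\alpha_r : X_{r,\beta} \to Z_\beta$ of \eqref{eq:alpha,r} are compatible with the face maps of $X_{\bullet,\beta}$: a face map $X_{r+1,\beta} \to X_{r,\beta}$ merges two adjacent fiber-product factors (e.g.\ $\mathbb{R}X_{0,2,\beta_i}\times_X\mathbb{R}X_{0,2,\beta_{i+1}}$) into a single factor ($\mathbb{R}X_{0,2,\beta_i+\beta_{i+1}}$) by gluing the corresponding stable maps along their shared marked point, and this does not change the resulting glued-together map in $Z_\beta$. This compatibility yields a canonical comparison morphism $\psi : \colim X_{\bullet,\beta} \to Z_\beta$ in $\dSt$; the plan is to show $\psi$ is an equivalence.

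\textbf{Classical fibers are contractible.} For a $\mathbb{C}$-point $(C,f)\in t_0 Z_\beta$, the bridge of $C$ --- the chain of $\mathbb{P}^1$'s contracted by stabilisation onto the distinguished node of $\sigma\circ\tau$ --- has some length $k\ge 0$. A preimage of $(C,f)$ in $t_0 X_{r,\beta}$ is a choice of cut sites in the bridge producing the prescribed fiber-product factor structure of $X_{r,\beta}$. As $r$ varies and the face maps act by omitting a cut, these preimages assemble into the standard simplicial model of the $k$-simplex, whose geometric realization is contractible. Thus $\psi$ is an equivalence after passing to classical truncations.

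\textbf{Matching the derived structures.} To upgrade to a derived equivalence I would use the mapping-stack description of \S~\ref{sec:deriv-enhanc-mathbbr}, namely $\mathbb{R}\overline{\mathcal{M}}_{0,m}(X,\beta') \simeq \mathbb{R}\Hom_{\dst/\mathfrak{M}_{0,m,\beta'}}(\mathfrak{M}_{0,m+1,\beta'},\, X\times\mathfrak{M}_{0,m,\beta'})$. Gluing two curves at a marked point is a pushout along the closed immersions given by the markings, and $\mathbb{R}\Hom(-,X)$ sends such pushouts to fiber products; this identifies each $X_{r,\beta}$ with the derived mapping stack from an $r$-step bridge-glued universal curve over the relevant stratum of $\mathfrak{M}_{0,n,\beta}$. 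The semi-simplicial colimit then corresponds to $\mathbb{R}\Hom(-,X)$ applied to the colimit of the bridge-glued universal curves, which is the universal curve over the full bridge locus --- i.e.\ precisely $Z_\beta$.

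\textbf{Main obstacle.} The delicate point is this last step: commuting $\mathbb{R}\Hom(-,X)$ with the semi-simplicial realization of bridge-glued universal curves. It is complicated by the subtlety noted after Theorem~\ref{thm:brane,Costello} that pushouts of schemes along closed immersions disagree with those computed in $\dSt$, producing only a canonical comparison $\theta : C_1\coprod^{dst}_{\pt} C_2 \to C_1\coprod^{sch}_{\pt} C_2$. I plan to reuse the mechanism already invoked in the proof of Theorem~\ref{thm:brane,Costello}: although the diagram is not strictly coherent in $\dSt$, the morphism $\theta$ becomes an equivalence after applying $\mathbb{R}\Hom(-,X)$, and the universal-curve property of $\mathfrak{M}_{0,m+1,\beta'}\to\mathfrak{M}_{0,m,\beta'}$ supplies the semi-simplicial descent needed to conclude $\colim X_{\bullet,\beta} \simeq Z_\beta$.
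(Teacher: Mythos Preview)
Your third paragraph contains a genuine error that breaks the argument. You correctly observe that $\mathbb{R}\Hom(-,X)$ sends pushouts of curves to fiber products of mapping stacks, and use this to identify each $X_{r,\beta}$ with a relative mapping stack from a glued universal curve. But then you write that ``the semi-simplicial colimit then corresponds to $\mathbb{R}\Hom(-,X)$ applied to the colimit of the bridge-glued universal curves.'' This is backwards: because $\mathbb{R}\Hom(-,X)$ is contravariant in the source, it converts colimits of curves into \emph{limits} of mapping stacks. What you would obtain from $\mathbb{R}\Hom(\colim C_r, X)$ is $\lim_r X_{r,\beta}$, not $\colim_r X_{r,\beta}$. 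There is no general mechanism expressing a colimit of mapping stacks as a mapping stack out of some single curve, so the derived upgrade you sketch does not go through. Your ``Main obstacle'' paragraph focuses on the pushout-in-schemes versus pushout-in-$\dSt$ discrepancy, but that is a secondary issue; the primary obstacle is this variance mismatch.

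The paper's proof proceeds along an entirely different, categorical route and avoids separating the problem into ``truncation'' and ``derived'' pieces. It shows that the pullback $f^*:\Perf(Z_\beta)\to\Perf(\colim^{\mathrm{DM}} X_{\bullet,\beta})$ is an equivalence by comparing both sides to $\lim_\Delta \Perf(X_{\bullet,\beta})$: the map from $\Perf(Z_\beta)$ is an equivalence by $h$-descent for perfect complexes, and the map from $\Perf(\colim X_{\bullet,\beta})$ is fully faithful by Lurie's gluing-along-closed-immersions theorem; hence $f^*$ is an equivalence. Since both stacks are perfect, this upgrades to an equivalence on $\Qcoh$, and then Tannakian duality forces $f$ itself to be an equivalence. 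Your fiberwise contractibility observation is geometrically suggestive, but the paper's argument shows that the real content lies in descent and reconstruction for sheaf categories rather than in a pointwise analysis.
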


\subsection{Virtual object from derived algebraic geometry}
\label{sec:virtual-object-from-1}
In this section, we explain how derived algebraic geometry will provide a sheaf in
$G_{0}(\overline{\mathcal{M}}_{g,n}(X,\beta))$ that we will compare to the virtual sheaf of Lee.

\begin{lem}[See for example \cite{MR3285853} p.192-193]\label{lem:K,iso} Let $X$ be a derived algebraic stack. Denote by $t_{0}(X)$ its truncation.
Denote by  $\iota:t_{0}(X)\hookrightarrow X$ be the closed embedding.
The morphism $\iota_{*}:G_{0}(t_{0}(X)) \to G_0(X)$ is an isomorphism. Moreover we have that
\begin{displaymath}
  (\iota_{*})^{-1}[\mathcal{F}]=  \sum_{i}(-1)^{i}[\pi_{i}(\mathcal{F})]
\end{displaymath}
\end{lem}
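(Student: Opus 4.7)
The plan is to construct an explicit inverse $\phi: G_0(X)\to G_0(t_0(X))$ via the formula in the statement, then verify both compositions are the identity using the $t$-exactness of $\iota_*$ and the Postnikov tower of a bounded coherent complex. Throughout I use that $G_0$ denotes the Grothendieck group of bounded coherent complexes, and that for every $\mathcal{F}\in \Coh^b(X)$ the homotopy sheaves $\pi_i(\mathcal{F})$ are naturally coherent sheaves on the classical truncation $t_0(X)$, since they are modules over $\pi_0(\mathcal{O}_X)=\mathcal{O}_{t_0(X)}$, and only finitely many are non-zero.

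First I would define $\phi([\mathcal{F}]) := \sum_i (-1)^i [\pi_i(\mathcal{F})]$ as an element of $G_0(t_0(X))$ and check well-definedness. Given a fiber sequence $\mathcal{F}'\to \mathcal{F}\to \mathcal{F}''$ in $\Coh^b(X)$, the associated long exact sequence of homotopy sheaves on $t_0(X)$ is a finite exact sequence of coherent sheaves, so its alternating Euler sum vanishes in $G_0(t_0(X))$; this is the usual argument that Euler characteristics pass through long exact sequences, and it shows $\phi$ descends to $G_0(X)$.

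Second, I would verify $\phi\circ \iota_* = \mathrm{id}$. Since $\iota$ is a closed immersion of derived stacks, $\iota_*$ is $t$-exact for the standard $t$-structure: for a discrete coherent sheaf $\mathcal{G}$ on $t_0(X)$ one has $\pi_0(\iota_*\mathcal{G}) = \mathcal{G}$ (viewed back on $t_0(X)$) and $\pi_i(\iota_*\mathcal{G})=0$ for $i>0$. Hence $\phi(\iota_*[\mathcal{G}]) = [\mathcal{G}]$, and by linearity this gives the identity on $G_0(t_0(X))$.

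Third, I would verify $\iota_*\circ\phi=\mathrm{id}$ by means of the Postnikov tower. For $\mathcal{F}\in \Coh^b(X)$ the tower $\{\tau_{\leq n}\mathcal{F}\}$ is finite, and each step gives a fiber sequence
\[
\pi_n(\mathcal{F})[n]\longrightarrow \tau_{\leq n}\mathcal{F}\longrightarrow \tau_{\leq n-1}\mathcal{F},
\]
so in $G_0(X)$ one gets $[\tau_{\leq n}\mathcal{F}] = (-1)^n[\iota_*\pi_n(\mathcal{F})] + [\tau_{\leq n-1}\mathcal{F}]$, using that $[M[1]]=-[M]$ in the Grothendieck group of a stable $\infty$-category (triangle $M\to 0\to M[1]$) and that each $\pi_n(\mathcal{F})$ is tautologically the pushforward of its restriction to $t_0(X)$. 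Telescoping the tower down to $\tau_{\leq N_{\min}}\mathcal{F}$ yields $[\mathcal{F}] = \sum_n (-1)^n [\iota_*\pi_n(\mathcal{F})] = \iota_*\phi([\mathcal{F}])$, as required. The main obstacle I expect is being careful about boundedness and about the identification of $\pi_i(\mathcal{F})$ with an object of $G_0(t_0(X))$ (versus a sheaf on $X$ supported on the truncation); once one fixes the convention that $\pi_i$ takes values in $\Coh(t_0(X))$ and that $\iota_*$ is $t$-exact, the rest is formal manipulation of Postnikov filtrations and long exact sequences.
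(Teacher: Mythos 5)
Your proof is correct and is the standard Postnikov-tower argument. The paper itself does not supply a proof of this lemma; it simply cites \cite{MR3285853}, so there is no in-text argument to compare against, but your construction of the explicit inverse $\phi([\mathcal{F}])=\sum_i(-1)^i[\pi_i(\mathcal{F})]$, the well-definedness via the long exact sequence of homotopy sheaves, the $t$-exactness of $\iota_*$ for the closed immersion, and the telescoping of the finite Postnikov filtration (with the sign coming from $[\mathcal{M}[1]]=-[\mathcal{M}]$) are exactly the ingredients the cited reference uses. The only thing worth flagging is that the boundedness/coherence of the $\pi_i(\mathcal{F})$ implicitly uses a Noetherian-type hypothesis on the derived stack, which the paper leaves tacit throughout.
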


Applying this lemma to the situation where $X=\mathbb{R}\overline{\mathcal{M}}_{g,n}(X,\beta)$, we
put
\begin{displaymath}
  \left[\mathcal{O}_{\overline{\mathcal{M}}_{g,n}(X,\beta)}^{\vir,\DAG}\right]:=\iota_{*}^{-1}[\mathcal{O}_{\mathbb{R}\overline{\mathcal{M}}_{g,n}(X,\beta)}].
\end{displaymath}
where the DAG means Derived Algebraic Geometry. Notice that the sheaf
$\mathcal{O}_{\overline{\mathcal{M}}_{g,n}(X,\beta)}^{\vir,\DAG}$ depends on the derived structure
that we put on the moduli space of stable maps.

The following theorem was not stated in \cite{2015arXiv150502964M}.

\begin{thm}\label{thm:orientation}
  The DAG-virtual sheaf
  $\mathcal{O}_{\overline{\mathcal{M}}_{g,n}(X,\beta)}^{\vir,\DAG}$ satisfies the orientation
axiom in $G_{0}$-theory. That is
\begin{enumerate}
\item Mapping to a point. Let $\beta=0$, we have
  \begin{displaymath}
    \mathcal{O}_{\overline{\mathcal{M}}_{g,n}(X,0)}^{\vir,\DAG}=\sum_{i}(-1)^{i}\wedge^{i}(R^{1}\pi_{*}\mathcal{O}_{\mathcal{C}}\boxtimes
    T_{X})^{\vee}
  \end{displaymath}
where $\mathcal{C}$ is the universal curve of $\overline{\mathcal{M}}_{g,n}$ and $\pi:\mathcal{C}\to
\overline{\mathcal{M}}_{g,n}$.
\item Product. We have
  \begin{displaymath}
\mathcal{O}^{\vir,\DAG}_{\overline{\mathcal{M}}_{g_{1},n_{1}}(X,\beta_{1})\times \overline{\mathcal{M}}_{g_{2},n_{2}}(X,\beta_{2})}=\mathcal{O}^{\vir,\DAG}_{\overline{\mathcal{M}}_{g_{1},n_{1}}(X,\beta_{1})}\boxtimes\mathcal{O}^{\vir,\DAG}_{\overline{\mathcal{M}}_{g_{2},n_{2}}(X,\beta_{2})} 
  \end{displaymath}
\item Cutting edges. With the notation of Diagram \eqref{eq:13}, we have
\begin{displaymath}
  \mathcal{O}^{\vir,\DAG}_{\overline{\mathcal{M}}_{g_{1},n_{1}}(X,\beta_{1})\times_{X}\overline{\mathcal{M}}_{g_{2},n_{2}}(X,\beta_{2})}
    =\Delta^{!}\mathcal{O}^{\vir,\DAG}_{\overline{\mathcal{M}}_{g_{1},n_{1}}(X,\beta_{1})\times \overline{\mathcal{M}}_{g_{2},n_{2}}(X,\beta_{2})}
\end{displaymath}
\item Forgetting tails. Forgetting the last marked point marked points, we get a morphism 
  $\pi:\overline{\mathcal{M}}_{g,n+1}(X,\beta)\to \overline{\mathcal{M}}_{g,n}(X,\beta)$. We have
  the following equality.
\begin{displaymath}\pi^{*}\mathcal{O}^{\vir,\DAG}_{\overline{\mathcal{M}}_{g,n}(X,\beta)}= \mathcal{O}^{\vir,\DAG}_{\overline{\mathcal{M}}_{g,n+1}(X,\beta)}.
  \end{displaymath}
\item Isogenies. The are two formulas. The morphism $\pi$ above induces a morphism
  $\psi: \overline{\mathcal{M}}_{g,n+1}(X,\beta)\to \overline{\mathcal{M}}_{g,n+1}
  \times_{\overline{\mathcal{M}}_{g,n}}\overline{\mathcal{M}}_{g,n}(X,\beta)$. With notation of
  Diagram \eqref{eq:key,diag}, we have
\begin{displaymath}
    \psi_{*}\mathcal{O}^{\vir,\DAG}_{\overline{\mathcal{M}}_{g,n+1}(X,\beta)}=g^{!}\mathcal{O}^{\vir,\DAG}_{\overline{\mathcal{M}}_{g,n}(X,\beta)}.
  \end{displaymath}
The second formula is
\begin{displaymath}
   \sum_{r\in\mathbb{N}} (-1)^{r}{\alpha_{r}}_{*}\sum_{\sum_{i=0}^{r}\beta_{i}=\beta}
      \mathcal{O}^{\vir,\DAG}_{X_{0,n_{1}+1}(X,\beta_{0})\times_{X}X_{0,2}(X,\beta_{1})\times_{X}\cdots
      \times_{X}X_{0,2}(X,\beta_{r-1})\times_{X}X_{0,n_{2}+1}(X,\beta_{r})}=g^{!}\mathcal{O}^{\vir,\DAG}_{X_{0,n}(X,\beta)}
\end{displaymath}
where $g$ is defined in the key diagram \eqref{eq:key,diag}.
\end{enumerate}
\end{thm}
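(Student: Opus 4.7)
The plan is to establish each of the five orientation axioms at the derived level, using Lemma \ref{lem:K,iso} to transfer every identity from $G_{0}$ of the truncation to $G_{0}$ of the derived enhancement, where products, fiber products, flat and quasi-smooth pullbacks, and proper pushforwards all have clean functorial descriptions. In every case I would first identify the relevant derived stack via the universal property of the $\mathbb{R}\mathrm{Hom}$ construction of Section \ref{sec:deriv-enhanc-mathbbr}, then match the structure sheaf with the claimed expression, and finally apply $\iota_{*}^{-1}$ to land in $G_{0}$ of the truncation.

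For the mapping-to-a-point axiom, a stable map of degree $0$ factors through a constant, so the truncation is $\overline{\mathcal{M}}_{g,n}\times X$, while the derived mapping stack has cotangent complex $(R\pi_{*}f^{*}\Omega_{X})^{\vee}$ at such a point, which for constant $f$ reduces to $(R\pi_{*}\mathcal{O}_{\mathcal{C}})^{\vee}\otimes T_{X}^{\vee}$. This complex lives in degrees $[0,1]$, so the derived enhancement is quasi-smooth with obstruction bundle $\mathcal{E}=R^{1}\pi_{*}\mathcal{O}_{\mathcal{C}}\boxtimes T_{X}$; locally its structure sheaf is the Koszul resolution of the zero section of $\mathcal{E}^{\vee}$, and the formula of Lemma \ref{lem:K,iso} gives $\sum_{i}(-1)^{i}\wedge^{i}\mathcal{E}^{\vee}$. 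The product axiom is formal: $\mathbb{R}\mathrm{Hom}$ converts disjoint unions of source curves into products of target moduli, and $\iota_{*}^{-1}$ is multiplicative. The cutting-edges axiom follows because gluing two curves at a marked point is a pushout in the category of sources, which $\mathbb{R}\mathrm{Hom}$ turns into a derived fiber product over $X$; since $\Delta^{!}$ is by definition derived pullback along the quasi-smooth diagonal, the identity reduces to derived base change together with the fact that $\iota_{*}^{-1}$ commutes with quasi-smooth pullback.

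For forgetting tails, the universal property gives a cartesian square presenting $\mathbb{R}\overline{\mathcal{M}}_{g,n+1}(X,\beta)$ as the derived fiber product of $\mathbb{R}\overline{\mathcal{M}}_{g,n}(X,\beta)$ with the universal curve $\mathfrak{M}_{g,n+1,\beta}\to\mathfrak{M}_{g,n,\beta}$. By Theorem \ref{thm,Costello} this universal curve is flat, so pulling back the structure sheaf produces the structure sheaf, and $\iota_{*}^{-1}$ commutes with flat pullback; this gives the forgetting-tails identity. The first formula of the isogeny axiom is again derived base change: the comparison $\psi$ is a closed immersion whose derived structure is controlled by Diagram \eqref{eq:key,diag}, and combining derived base change along the quasi-smooth gluing $g$ with properness of $\psi$ — so that $\psi_{*}$ commutes with $\iota_{*}^{-1}$ — yields the stated equality.

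The main obstacle is the second isogeny identity, namely the alternating sum over partitions $\sum_{i=0}^{r}\beta_{i}=\beta$, which is the $G_{0}$-theoretic shadow of Theorem \ref{thm:colim}. Once $Z_{\beta}=\colim X_{\bullet,\beta}$ is known at the derived level, one presents $[\mathcal{O}_{Z_{\beta}}]\in G_{0}(Z_{\beta})$ through the augmented semi-simplicial object $X_{\bullet,\beta}\to Z_{\beta}$, whose face maps are the gluings $\alpha_{r}$; the alternating sum then emerges from the Euler-characteristic computation attached to this semi-simplicial resolution, in the style of Dold–Kan. Pushing this identity forward by $g$ and using the previous base-change formulas transports it to $G_{0}$ of $\overline{\mathcal{M}}_{0,n}(X,\beta)$ and produces the desired equation. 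The delicate points will be, first, checking that the sum is finite — which holds because $\NE(X)$ is discrete and, once trivial components $\beta_{i}=0$ are removed, stability bounds $r$ in terms of $\beta$ — and, second, verifying that $\alpha_{r,*}\circ\iota_{*}^{-1}=\iota_{*}^{-1}\circ\alpha_{r,*}^{\mathrm{der}}$ on each stratum, which requires controlling the properness and quasi-smoothness of each $\alpha_{r}$. This last bookkeeping, together with Theorem \ref{thm:colim}, is where the heart of the argument lies.
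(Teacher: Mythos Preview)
Your approach to parts (2)--(4) and to the second isogeny formula in (5) matches the paper's: K\"unneth for products, derived base change plus the refined Gysin description \eqref{eq:19} for cutting edges, flatness of the universal curve for forgetting tails, and Theorem~\ref{thm:colim} together with the alternating-sum expansion of $\mathcal{O}_{Z_{\beta}}$ for the second isogeny identity. Two points, however, need more care.

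For (1), the step ``locally its structure sheaf is the Koszul resolution of the zero section of $\mathcal{E}^{\vee}$'' is exactly the difficulty. Knowing that the pulled-back cotangent complex $j^{*}\mathbb{L}_{\mathbb{R}\mathcal{M}}$ is perfect in amplitude $[-1,0]$ with a given obstruction bundle does \emph{not} identify the derived enhancement with the Koszul model $\mathbb{V}(\mathcal{E}^{\vee}[1])$; see Remark~\ref{rk:different,retract}, where two genuinely different enhancements share the same pulled-back cotangent complex. The paper therefore does \emph{not} argue directly: it first proves (Proposition~\ref{prop:def,normal,sheaf}, via Gaitsgory's deformation to the normal bundle) that $j_{*}^{-1}\mathcal{O}_{\mathbb{R}\mathcal{M}}=s_{*}^{-1}\mathcal{O}_{\mathbb{V}(\mathbb{L}_{j}[-1])}$ in $G_{0}$, and only then, for $\beta=0$, uses smoothness of the truncation and the existence of a retract to compute the right-hand side as the alternating wedge sum. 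Your sketch skips this deformation step; without it the Koszul identification is unjustified. (The paper does offer, in Appendix~C, a direct argument along the lines you have in mind, but only in the affine case.)

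For the first isogeny formula in (5), your claim that $\psi$ is a closed immersion is not correct, so ``$\psi_{*}$ is harmless'' does not follow. The paper sets up the comparison diagram and reduces, via derived base change, to the identity
\[
\varphi_{*}\mathcal{O}_{\mathbb{R}\overline{\mathcal{M}}_{g,n+1}(X,\beta)}
=\mathcal{O}_{\overline{\mathcal{M}}_{g,n+1}\times_{\overline{\mathcal{M}}_{g,n}}\mathbb{R}\overline{\mathcal{M}}_{g,n}(X,\beta)},
\]
which is a genuine cohomological vanishing statement (essentially that $\varphi$ has rationally connected fibres with trivial higher pushforwards of $\mathcal{O}$); the paper imports this from the proof of Proposition~9 in \cite{MR2040281}. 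Your outline needs to supply this input rather than treat $\psi_{*}$ as formal.
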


Before proving this theorem, we need a  preliminary result.
Consider a homotopical cartesian morphisms of schemes 
\begin{displaymath}
  \xymatrix{X':=X\times_{Y}Y'\ar[rd]^{\iota} \ar@/^2pc/[rrd] \ar@/_2pc/[ddr]&&\\&X\times^{h}_{Y}Y'\ar[r]^-{\widetilde{f}} \ar[d]_-{g}&Y'\ar[d]\\&X \ar[r]^-{f}&Y}
\end{displaymath}
Denote by $X\times_{Y}^{h}Y'$ the homotopical pullback so that we have the closed immersion $\iota :
X' \to X\times_{Y}^{h}Y'$.
Assume that $f$ is a regular closed immersion. We have a rafined Gysin morphism (see
\cite[p.4]{MR2040281}, \cite[ex.18.3.16]{MR1644323} or chapter 6 in \cite{MR801033}) which turns to be 
\begin{align}\label{eq:19}
  f^{!}:G(Y') &\to G(X') \\ \nonumber
[\mathcal{F}_{Y'}]& \mapsto  (\iota_{*})^{-1}\circ\widetilde{f}^{*}
[\mathcal{F}_{Y'}].
\end{align}

\begin{proof}[Proof of Theorem \ref{thm:orientation}]
  (1). Strangely this proof is not easy and we postpone to the Appendix~\ref{sec:proof-theorem}.
(2). This follows from the K\"unneth formula.

(3). We have the following diagram.
\begin{displaymath}
  \xymatrix{X_{g_{1},n_{1},\beta_{1}} \times_{X}X_{g_{2},n_{2},\beta_{2}}\ar[d]_{k}\ar@/^2pc/[rd]^-{h}&
\\
X_{g_{1},n_{1},\beta_{1}} \times^{h}_{X}X_{g_{2},n_{2},\beta_{2}}\ar[d]_-{j}\ar[r]^-{g}&
X_{g_{1},n_{1},\beta_{1}} \times X_{g_{2},n_{2},\beta_{2}}\ar[d]^-{i}
\\
 \mathbb{R}X_{g_{1},n_{1},\beta_{1}}
    \times_{X}\mathbb{R}X_{g_{2},n_{2},\beta_{2}} \ar[d] \ar[r]^-{f}&\mathbb{R}X_{g_{1},n_{1},\beta_{1}}
    \times\mathbb{R}X_{g_{2},n_{2},\beta_{2}} \ar[d]^{e_{i},e_{j}} \\ X \ar[r]^-{\Delta}& X\times X}
\end{displaymath}
We deduce the following equalities
\begin{align*}
  \Delta^{!}\mathcal{O}^{\vir,\DAG}_{X_{g_{1},n_{1},\beta_{1}}\times
    X_{g_{2},n_{2},\beta_{2}}}&=\Delta^{!}(i_{*})^{-1}\mathcal{O}_{\mathbb{R}X_{g_{1},n_{1},\beta_{1}}\times
    \mathbb{R}X_{g_{2},n_{2},\beta_{2}}}\\
&= (k_{*})^{-1}g^{*}(i_{*})^{-1}\mathcal{O}_{\mathbb{R}X_{g_{1},n_{1},\beta_{1}}\times \mathbb{R}X_{g_{2},n_{2},\beta_{2}}}&
 & \mbox{by definition of rafined Gysin morphism} \\
&=(k_{*})^{-1}(j_{*})^{-1}f^{*}\mathcal{O}_{\mathbb{R}X_{g_{1},n_{1},\beta_{1}}\times
  \mathbb{R}X_{g_{2},n_{2},\beta_{2}}} &  &\mbox {by derived base change}\\
&= (k_{*})^{-1}(j_{*})^{-1}\mathcal{O}_{\mathbb{R}X_{g_{1},n_{1},\beta_{1}}\times_{X}
  \mathbb{R}X_{g_{2},n_{2},\beta_{2}}} \\
&= \mathcal{O}^{\vir,\DAG}_{X_{g_{1},n_{1},\beta_{1}}\times_{X}
  X_{g_{2},n_{2},\beta_{2}}}
\end{align*}

(4). As $\widetilde{\pi}:\mathbb{R}\overline{\mathcal{M}}_{g,n+1}(X,\beta)\to
\mathbb{R}\overline{\mathcal{M}}_{g,n}(X,\beta)$ is the universal curve (hence, it is flat) and
$\pi$ is the truncation of $\widetilde{\pi}$. The derived base change formula implies the equality. 

(5). We have the following diagram
\begin{displaymath}
  \xymatrix{\overline{\mathcal{M}}_{g,n+1}(X,\beta)\ar[r]^-{\psi} \ar[d]_-{k}&\overline{\mathcal{M}}_{g,n}\times_{\overline{\mathcal{M}}_{g,n}}\overline{\mathcal{M}}_{g,n}(X,\beta)\ar[r]^-{a}
    \ar[d]^-{j}&\overline{\mathcal{M}}_{g,n}(X,\beta)\ar[d]_-{i}\\
\mathbb{R}\overline{\mathcal{M}}_{g,n+1}(X,\beta)\ar[r]^-{\varphi} \ar@/_2pc/[rd] &\overline{\mathcal{M}}_{g,n+1}\times_{\overline{\mathcal{M}}_{g,n}}\mathbb{R}\overline{\mathcal{M}}_{g,n}(X,\beta)\ar[r]^-{b}\ar[d]&\mathbb{R}\overline{\mathcal{M}}_{g,n}(X,\beta)\ar[d]\\
&\overline{\mathcal{M}}_{g,n+1} \ar[r]^-{c}& \overline{\mathcal{M}}_{g,n}}
\end{displaymath}
Notice that as $c$ is flat, the upper right square is also $h$-cartesian.
We have
\begin{align*}
  c^{!}\mathcal{O}^{\vir,\DAG}_{\overline{\mathcal{M}}_{g,n}(X,\beta)}&=c^{!}(i_{*})^{-1}\mathcal{O}_{\mathbb{R}\overline{\mathcal{M}}_{g,n}(X,\beta)}\\
&=a^{*}(i_{*})^{-1}\mathcal{O}_{\mathbb{R}\overline{\mathcal{M}}_{g,n}(X,\beta)}\\ 
&= (j_{*})^{-1}b^{*}\mathcal{O}_{\mathbb{R}\overline{\mathcal{M}}_{g,n}(X,\beta)}& \mbox{by derived base change}\\
&=(j_{*})^{-1}\mathcal{O}_{\overline{\mathcal{M}}_{g,n+1}\times_{\overline{\mathcal{M}}_{g,n}}\mathbb{R}\overline{\mathcal{M}}_{g,n}(X,\beta)}
\end{align*}
On the other hand, we have
\begin{align*}
  \psi_{*}\mathcal{O}^{\vir,\DAG}_{\overline{\mathcal{M}}_{g,n}(X,\beta)}
  &=\psi_{*}(k_{*})^{-1}\mathcal{O}_{\mathbb{R}\overline{\mathcal{M}}_{g,n+1}(X,\beta)}\\
  &= (j_{*})^{-1}\varphi_{*}\mathcal{O}_{\mathbb{R}\overline{\mathcal{M}}_{g,n+1}(X,\beta)} &
\end{align*}
The formula follows from the equality below which is a consequence of the proof of Proposition 9 in \cite{MR2040281}.
\begin{displaymath}
  \varphi_{*}\mathcal{O}_{\mathbb{R}\overline{\mathcal{M}}_{g,n+1}(X,\beta)}=\mathcal{O}_{\overline{\mathcal{M}}_{g,n}\times_{\overline{\mathcal{M}}_{g,n}}\mathbb{R}\overline{\mathcal{M}}_{g,n}(X,\beta)}
\end{displaymath}

To prove the second formula of (5), we use the key Diagram
\eqref{eq:key,diag}) with Theorem \ref{thm:colim}. Let $g_{1},g_{2},n_{1},n_{2}$ be integers. Put
$g=g_{1}+g_{2}$ and $n=n_{1}+n_{2}$ and denote $\overline{\mathcal{M}}_{i}:=\overline{\mathcal{M}}_{g_{i},n_{i}+1}$.
\begin{displaymath}
  \xymatrix{t_{0}(Z_{\beta})\ar[d]^-{k}\ar@/^{2pc}/[rd]^-{a}\\
\left(\overline{\mathcal{M}}_{1}\times \overline{\mathcal{M}}_{2}\right)\times^{h}\overline{\mathcal{M}}_{g,n}(X,\beta)\ar[r]^-{b}
    \ar[d]^-{j}&\overline{\mathcal{M}}_{g,n}(X,\beta)\ar[d]_-{i}\\
Z_{\beta}\ar[r]^-{c}\ar[d]&\mathbb{R}\overline{\mathcal{M}}_{g,n}(X,\beta)\ar[d]\\
\overline{\mathcal{M}}_{1}\times \overline{\mathcal{M}}_{2} \ar[r]^-{g}& \overline{\mathcal{M}}_{g,n}}
\end{displaymath}

We have
\begin{align*}
  g^{!}\mathcal{O}^{\vir,\DAG}_{\overline{\mathcal{M}}_{g,n}(X,\beta)}&= g^{!}(i_{*})^{-1}\mathcal{O}_{\mathbb{R}\overline{\mathcal{M}}_{g,n}(X,\beta)}\\
&=(k_{*})^{-1}b^{*}(i_{*})^{-1}\mathcal{O}_{\mathbb{R}\overline{\mathcal{M}}_{g,n}(X,\beta)}\\
&=(k_{*}^{-1})(j_{*})^{-1}c^{*} \mathcal{O}_{\mathbb{R}\overline{\mathcal{M}}_{g,n}(X,\beta)}&
                                                                                               \mbox{by derived base change
                                                                                           }\\
&=(j\circ k)_{*} ^{-1}\mathcal{O}_{Z_{\beta}}
\end{align*}
We deduce the formula by observing that $Z_{\beta}$ is the colimit of $X_{\bullet,\beta}$ (see
Theorem \ref{thm:colim}) and that the structure sheaf of a co-limit is the alternating sum of $\mathcal{O}_{X_{r,\beta}}$.

\end{proof}

The last formula of Theorem \ref{thm:orientation} and the third one implies the following corollary.

\begin{cor} We have the following equality in $G_0(t_{0}(Z_{\beta}))$.\\
\resizebox{1\linewidth}{!}{
  \begin{minipage}{\linewidth}   \begin{align*}
      \label{eq:16}
     \sum_{r\in\mathbb{N}} (-1)^{r}{\alpha_{r}}_{*}\left(\sum_{\sum_{i=0}^{r}\beta_{i}=\beta}
      (\Delta^{r})^{!}\left(\mathcal{O}^{\vir,\DAG}_{X_{0,n_{1}+1}(X,\beta_{0})}\otimes\mathcal{O}^{\vir,\DAG}_{X_{0,2}(X,\beta_{1})}\otimes\cdots
      \otimes \mathcal{O}^{\vir,\DAG}_{X_{0,2}(X,\beta_{r-1})}\otimes
      \mathcal{O}^{\vir,\DAG}_{X_{0,n_{2}+1}(X,\beta_{r})}\right)\right)=g^{!}\mathcal{O}^{\vir,\DAG}_{X_{0,n}(X,\beta)}
    \end{align*}\end{minipage}}
\end{cor}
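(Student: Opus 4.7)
The plan is to combine the second isogeny formula in Theorem \ref{thm:orientation}(5) with the cutting-edges formula (3) and the product formula (2), applied iteratively across the $r$-fold fibered product over $X$.

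Starting from the right-hand side, the second isogeny formula in (5) already identifies $g^{!}\mathcal{O}^{\vir,\DAG}_{X_{0,n}(X,\beta)}$ with the alternating sum
\[
\sum_{r\in\mathbb{N}} (-1)^{r}{\alpha_{r}}_{*}\sum_{\sum_{i=0}^{r}\beta_{i}=\beta}
\mathcal{O}^{\vir,\DAG}_{X_{0,n_{1}+1}(X,\beta_{0})\times_{X}X_{0,2}(X,\beta_{1})\times_{X}\cdots \times_{X}X_{0,n_{2}+1}(X,\beta_{r})}.
\]
So it suffices to rewrite each DAG-virtual sheaf of an $(r{+}1)$-fold fiber product over $X$ as $(\Delta^{r})^{!}$ applied to the external tensor product of the individual DAG-virtual sheaves on the factors, where $\Delta^{r}:X\to (X\times X)^{r}$ is the iterated diagonal appearing in diagram \eqref{eq:13,diag,r}.

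For the identification on each summand, I would proceed by induction on $r$. The base case $r=1$ is exactly the cutting-edges formula (3). For the inductive step, write the $(r{+}1)$-fold fibered product as a fibered product of the $r$-fold one with the remaining factor over $X$; applying (3) once isolates a single $\Delta^{!}$, while the K\"unneth-type product formula (2) lets one split the external tensor product, so the iteration produces $(\Delta^{r})^{!}$ of the full external tensor product. Equivalently, one can iterate the derived base-change argument used in the proof of (3): the diagonal $\Delta^{r}:X\to (X\times X)^{r}$ factors as a composition of $r$ copies of $\Delta$, each of which is a regular closed immersion, and the refined Gysin map is compatible with composition via \eqref{eq:19}.

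Substituting the resulting identity
\[
\mathcal{O}^{\vir,\DAG}_{X_{0,n_{1}+1}(X,\beta_{0})\times_{X}\cdots\times_{X}X_{0,n_{2}+1}(X,\beta_{r})}
=(\Delta^{r})^{!}\!\left(\mathcal{O}^{\vir,\DAG}_{X_{0,n_{1}+1}(X,\beta_{0})}\otimes\cdots\otimes\mathcal{O}^{\vir,\DAG}_{X_{0,n_{2}+1}(X,\beta_{r})}\right)
\]
into the isogeny formula, and pushing the sum over partitions $\sum_{i=0}^{r}\beta_{i}=\beta$ inside ${\alpha_{r}}_{*}$ (which is legitimate because the source of $\alpha_{r}$ is already a disjoint union over such partitions by \eqref{eq:alpha,r}), yields exactly the claimed equality.

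The only non-routine point in this chain is the iteration of cutting edges, where one must check that the pullback squares used at each step are homotopy cartesian so that the derived base-change of the argument in (3) still applies; this follows because the maps involved are the evaluation maps to $X$, which are the same maps as in diagram \eqref{eq:13,diag,r}, and homotopy fiber products are associative in $\dSt$.
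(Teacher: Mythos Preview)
Your proposal is correct and follows the same route the paper indicates: the text says explicitly that the corollary is implied by the last formula of Theorem~\ref{thm:orientation}(5) together with part~(3), which is precisely what you do (with (2) used to split the external product). Your write-up simply supplies the details of the iteration that the paper leaves implicit.
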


\subsection{Virtual object from perfect obstruction theory}
\label{sec:virtual-object-from}
Here we follow the approach of Behrend-Fantechi \cite{MR1437495} to construct virtual object.

In the following, we denote by $\mathcal{M}$  a Deligne-Mumford stack. The reader can think of
$\mathcal{M}$ being $\overline{\mathcal{M}}_{0,n}(X,\beta)$ as an example.

\begin{defn}
  Let $\mathcal{M}$ be a Deligne-Mumford stack. An element $E^{\bullet}$ in the derived category $D(\mathcal{M})$ in degree
  $(-1,0)$ is a perfect obstruction theory for $\mathcal{M}$ if we have a morphism $\varphi:E^{\bullet}\to
  \mathbb{L}_{\mathcal{M}}$ that satisfies
  \begin{enumerate}
  \item $h^{0}(\varphi)$ is an isomorphism,
  \item $h^{-1}(\varphi)$ is surjective.
  \end{enumerate}
\end{defn}

Let $E^{\bullet}$ be a perfect obstruction theory. Following \cite{MR1437495}, we have the following morphisms.
\begin{enumerate}
\item The morphism $a:C_{\mathcal{M}}\to h^{1}/h^{0}(E^{\vee}_{\bullet})$, where $C_{\mathcal{M}}$ is the intrinsic normal cone and $h^{1}/h^{0}(E_{\bullet}^{\vee})$
is the quotient stack $[E_{-1}^{\vee}/E_{0}^{\vee}]$. To understand how to construct this morphism,
let us simplify the situation. Assume that $\mathcal{M}$ is embedded in something
smooth, i.e $f:\mathcal{M}\hookrightarrow Y$ is a closed embedding with ideal sheaf
$\mathcal{I}$. Then the intrinsic normal cone is the quotient stack $C_{\mathcal{M}}=[C_{\mathcal{M}}Y/f^{*}TY]$ where
$C_{\mathcal{M}}Y:=\Spec \oplus_{n\geq 0} \mathcal{I}^{n}/\mathcal{I}^{n+1}$ is the normal cone of
$f$. In this case, the intrinsic normal sheaf is
$N_{\mathcal{M}}=[N_{\mathcal{M}}Y/f^{*}TY]=h^{1}/h^{0}(\mathbb{L}_{\mathcal{M}}^{\vee})$
where $N_{\mathcal{M}}Y:=\Spec \Sym \mathcal{I}/\mathcal{I}^{2}$. As we have a morphism from the
normal cone to the normal sheaf
$C_{\mathcal{M}}Y \to N_{\mathcal{M}}Y$, we deduce a morphism from the intrinsic  normal cone to
the intrinsic normal sheaf i.e., a morphism
 \begin{equation}\label{eq:17}
 C_{\mathcal{M}}\to N_{\mathcal{M}}
 \end{equation}
Now the morphism of the perfect obstruction theory $\varphi:E^{\bullet}\to  \mathbb{L}_{\mathcal{M}}$ induces a morphism from
\begin{equation}
  \label{eq:18}
    N_{\mathcal{M}} \to [E_{-1}^{\vee}/E_{0}^{\vee}]
  \end{equation}
The morphism $a$ is the composition of the two morphisms \eqref{eq:17} and \eqref{eq:18}.
\item We also have a natural morphism $b:\mathcal{M}\to  h^{1}/h^{0}(E_{\bullet}^{\vee})$ given by the zero section.
\end{enumerate}

From these two morphisms, we can perform the homotopical fiber product
\begin{align}\label{eq:1}
  \xymatrix{\mathcal{M}\times_{h^{1}/h^{0}(E^{\vee}_{\bullet})}^{h}C_{\mathcal{M}} \ar[r]\ar[d]^{r}& C_{\mathcal{M}}\ar[d] \\ \mathcal{M}
    \ar[r] &h^{1}/h^{0}(E_{\bullet}^{\vee})}
\end{align}
As the standard fiber product is $\mathcal{M}$, we have that
$\mathcal{M}\times_{h^{1}/h^{0}(E^{\vee}_{\bullet})}^{h}C_{\mathcal{M}}$ is a derived enhancement of $\mathcal{M}$ with
$\widetilde{j}:\mathcal{M}\to
\mathcal{M}\times_{h^{1}/h^{0}(E^{\vee}_{\bullet})}^{h}C_{\mathcal{M}}$ the canonical closed
embedding. Notice that in the case $\mathcal{M}=\overline{\mathcal{M}}_{g,n}(X,\beta)$, we get a
 derived enhancement which is different from $\mathbb{R}\overline{\mathcal{M}}_{g,n}(X,\beta)$ (see
 Remark \ref{rk:different,retract}). We
 will compare these two structures in \S~\ref{sec:comp-theor-two}.
Hence we can apply the Lemma \ref{lem:K,iso} and we denote
\begin{align}\label{eq:vir,POT}
  [\mathcal{O}_{\mathcal{M}}^{\vir,\POT}]:=\widetilde{j}_{*}^{-1}[\mathcal{O}_{\mathcal{M}\times_{h^{1}/h^{0}(E^{\vee}_{\bullet})}^{h}C_{\mathcal{M}}}]
  \in G_0(\mathcal{M})
\end{align}
where POT means Perfect Obstruction Theory.
The definition of Lee for the virtual sheaf turns to be exactly this one. Indeed, Lee consider the
following (not homotopical) la
cartesian  diagram
\begin{align}\label{eq:diag,POT}
  \xymatrix{\mathcal{M}\times_{E_{-1}^{\vee}} C_{1} \ar[r] \ar[d]^{r}&C_{1}\ar[r] \ar[d]& C_{\mathcal{M}}\ar[d] \\ \mathcal{M}\ar[r]&
    E_{-1}^{\vee}\ar[r]&h^{1}/h^{0}(E_{\bullet}^{\vee})}
\end{align}

In \cite[p.8]{MR2040281}, Lee takes as a definition for the virtual sheaf 
\begin{align*}
\mathcal{O}_{\mathcal{M}}^{\vir}:= \sum_{i}(-1)^{i}
\mathcal{T}or_{i}^{h^{1}/h^{0}}(\mathcal{O}_{\mathcal{M}},\mathcal{O}_{C_{1}})= \mathcal{O}_{\mathcal{M}}\otimes^{\mathbb{L}}_{h^{1}/h^{0}}\mathcal{O}_{C_{1}}=\mathcal{O}_{\mathcal{M}}^{\vir,\POT}
\end{align*}
where the last equality follows from Lemma \ref{lem:K,iso}.

\subsection{Comparison theorem of the two approachs}
\label{sec:comp-theor-two}

Let $\mathcal{M}:=\overline{\mathcal{M}}_{0,n}(X,\beta)$. In this section, we want to compare
$\mathcal{O}_{\mathcal{M}}^{\vir,\DAG}$ with $\mathcal{O}_{\mathcal{M}}^{\vir,\POT}$. The first question is : what is
the perfect obstruction theory we are choosing ?

This is given by the following result.
\begin{prop}[\cite{2011-Schur-Toen-Vezzosi}]
  Let $\mathbb{R}\mathcal{M}$ be a derived Deligne-Mumford stack. Denote by $\mathcal{M}$ its truncation and
  its truncation morphism by $j:\mathcal{M}\hookrightarrow \mathbb{R}\mathcal{M}$. Then
$j^{*}\mathbb{L}_{\mathbb{R}\mathcal{M}}\to \mathbb{L}_{\mathcal{M}}$ is a perfect obstruction theory.
\end{prop}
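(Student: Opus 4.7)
The plan is to extract the perfect obstruction theory from the standard fiber sequence of cotangent complexes associated with the closed immersion $j:\mathcal{M}\hookrightarrow \mathbb{R}\mathcal{M}$:
\begin{equation*}
j^{*}\mathbb{L}_{\mathbb{R}\mathcal{M}}\longrightarrow \mathbb{L}_{\mathcal{M}}\longrightarrow \mathbb{L}_{\mathcal{M}/\mathbb{R}\mathcal{M}}\longrightarrow j^{*}\mathbb{L}_{\mathbb{R}\mathcal{M}}[1].
\end{equation*}
The two conditions in the definition of a perfect obstruction theory, namely that $h^{0}$ is an isomorphism and $h^{-1}$ is surjective, will follow from the long exact sequence of cohomology sheaves once I know enough about the connectivity of the relative term $\mathbb{L}_{\mathcal{M}/\mathbb{R}\mathcal{M}}$. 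I also need to address why $j^{*}\mathbb{L}_{\mathbb{R}\mathcal{M}}$ is perfect in amplitude $[-1,0]$, but in our geometric setting this is exactly the \emph{quasi-smoothness} of $\mathbb{R}\mathcal{M}=\mathbb{R}\overline{\mathcal{M}}_{0,n}(X,\beta)$ proved by Sch\"urg-To\"en-Vezzosi (and pullback along $j$ is right $t$-exact), so I'll invoke it and focus on the more delicate connectivity point.

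First, I would reduce locally to the affine derived case, replacing $\mathbb{R}\mathcal{M}$ by $\Spec A$ for a connective cdga (or simplicial commutative ring) $A$ and $\mathcal{M}$ by $\Spec \pi_{0}(A)$, with $j$ coming from the canonical surjection $A\twoheadrightarrow \pi_{0}(A)$. The key input is the following connectivity statement: \emph{if $f:A\to B$ is a map of connective $E_{\infty}$-rings which is an isomorphism on $\pi_{0}$, then $\mathbb{L}_{B/A}$ is $2$-connective}, i.e.\ $\pi_{i}(\mathbb{L}_{B/A})=0$ for $i\leq 1$ (cf.\ Lurie, \emph{Higher Algebra}). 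Applied to the truncation map $A\to \pi_{0}(A)$ this gives $\pi_{0}(\mathbb{L}_{\mathcal{M}/\mathbb{R}\mathcal{M}})=\pi_{1}(\mathbb{L}_{\mathcal{M}/\mathbb{R}\mathcal{M}})=0$, which globalises to the stack setting since the cotangent complex is defined \'etale locally.

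Plugging this into the long exact sequence associated to the fiber sequence above, in homotopical grading
\begin{equation*}
\pi_{1}(\mathbb{L}_{\mathcal{M}/\mathbb{R}\mathcal{M}})\to \pi_{0}(j^{*}\mathbb{L}_{\mathbb{R}\mathcal{M}})\to \pi_{0}(\mathbb{L}_{\mathcal{M}})\to \pi_{0}(\mathbb{L}_{\mathcal{M}/\mathbb{R}\mathcal{M}})
\end{equation*}
the two outer terms vanish and I get an isomorphism on $\pi_{0}$, which translates to an isomorphism on $h^{0}$ in cohomological grading. Similarly from
\begin{equation*}
\pi_{1}(j^{*}\mathbb{L}_{\mathbb{R}\mathcal{M}})\to \pi_{1}(\mathbb{L}_{\mathcal{M}})\to \pi_{1}(\mathbb{L}_{\mathcal{M}/\mathbb{R}\mathcal{M}})=0
\end{equation*}
I obtain surjectivity on $\pi_{1}$, i.e.\ on $h^{-1}$. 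Combined with the perfectness/amplitude statement mentioned above, this shows that the map $j^{*}\mathbb{L}_{\mathbb{R}\mathcal{M}}\to \mathbb{L}_{\mathcal{M}}$ is a perfect obstruction theory on $\mathcal{M}$ in the sense of Behrend-Fantechi.

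The main obstacle is really the $2$-connectivity of $\mathbb{L}_{\pi_{0}(A)/A}$: this is not formal, but a consequence of the fact that the fiber of $A\to \pi_{0}(A)$ is $1$-connective, together with the standard connectivity estimate relating the connectivity of a map to that of its cotangent complex (one gains one degree). Every other step is bookkeeping with the long exact sequence. I would therefore organise the exposition as (i) quote the Lurie connectivity theorem and its globalisation to derived DM stacks, (ii) write out the two relevant pieces of the long exact sequence, (iii) recall that quasi-smoothness of $\mathbb{R}\mathcal{M}$ makes $j^{*}\mathbb{L}_{\mathbb{R}\mathcal{M}}$ perfect of amplitude $[-1,0]$, and (iv) conclude.
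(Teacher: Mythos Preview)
The paper does not prove this proposition at all: it is simply quoted as a result of Sch\"urg--To\"en--Vezzosi, with the citation \cite{2011-Schur-Toen-Vezzosi}, and the text moves on immediately. So there is no ``paper's own proof'' to compare against.

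Your argument is correct and is essentially the standard one (and the one in the cited reference): the transitivity triangle for $j$, the $2$-connectivity of $\mathbb{L}_{\mathcal{M}/\mathbb{R}\mathcal{M}}$ coming from the fact that $A\to\pi_0(A)$ is an isomorphism on $\pi_0$, and then reading off $h^0$ an isomorphism and $h^{-1}$ surjective from the long exact sequence. You are also right to flag that the amplitude $[-1,0]$ condition in the Behrend--Fantechi definition is \emph{not} automatic for an arbitrary derived Deligne--Mumford stack and requires quasi-smoothness of $\mathbb{R}\mathcal{M}$; the paper's statement is silent on this, but in the intended application $\mathbb{R}\mathcal{M}=\mathbb{R}\overline{\mathcal{M}}_{g,n}(X,\beta)$ is quasi-smooth. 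Incidentally, the paper does use exactly your connectivity ingredients later, in the proof of Lemma~\ref{lem:appen} in Appendix~\ref{sec:proof-theorem}, where the $2$-connectivity of $\mathbb{L}_j$ and the identification $\pi_2(\mathbb{L}_j)\simeq\pi_1(j^*\mathbb{L}_F)$ are spelled out via the same long exact sequence.
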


Now the original question makes perfectly sense and we have the following result that says that they
are the same sheaves.

\begin{thm}[See Proposition 4.3.2 in \cite{2015arXiv150502964M}]\label{thm,O,pot=Dag}
In $G_0(\mathcal{M})$, we have
\begin{displaymath}
  [\mathcal{O}_{\mathcal{M}}^{\vir,\DAG}] =[\mathcal{O}_{\mathcal{M}}^{\vir,\POT}]
\end{displaymath}
\end{thm}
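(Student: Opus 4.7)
The plan is to exploit the Sch\"urg-To\"en-Vezzosi proposition cited just above to identify the two derived enhancements as sharing the same perfect obstruction theory, and then to reduce both virtual structure sheaves to the same local Koszul/Tor computation via Lemma~\ref{lem:K,iso}.

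First, by that proposition, the truncation inclusion $j : \mathcal{M} \hookrightarrow \mathbb{R}\mathcal{M}$ endows $\mathcal{M}$ with the POT $E^{\bullet} := j^{*}\mathbb{L}_{\mathbb{R}\mathcal{M}} \to \mathbb{L}_{\mathcal{M}}$. This is precisely the POT used to form the homotopy fibre product $\mathcal{M}\times^{h}_{h^{1}/h^{0}(E^{\vee}_{\bullet})}C_{\mathcal{M}}$ in \eqref{eq:1}. Hence both derived enhancements $\mathbb{R}\mathcal{M}$ and $\mathcal{M}\times^{h}_{h^{1}/h^{0}(E^{\vee}_{\bullet})}C_{\mathcal{M}}$ are quasi-smooth derived Deligne-Mumford stacks with the same classical truncation $\mathcal{M}$ and the same cotangent complex $E^{\bullet}$ at the truncation.

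Next, I would apply Lemma~\ref{lem:K,iso} to both sides, rewriting
\begin{align*}
[\mathcal{O}^{\vir,\DAG}_{\mathcal{M}}] &= \sum_{i}(-1)^{i}[\pi_{i}(\mathcal{O}_{\mathbb{R}\mathcal{M}})],\\
[\mathcal{O}^{\vir,\POT}_{\mathcal{M}}] &= \sum_{i}(-1)^{i}[\pi_{i}(\mathcal{O}_{\mathcal{M}\times^{h}_{h^{1}/h^{0}(E^{\vee}_{\bullet})}C_{\mathcal{M}}})].
\end{align*}
The core of the proof is to show these agree term by term in $G_{0}(\mathcal{M})$. Work locally on $\mathcal{M}$: choose a chart with a global presentation $E^{\bullet} = [E^{-1}\to E^{0}]$ of the POT and an embedding $\mathcal{M} \hookrightarrow Y$ into a smooth ambient scheme with $E^{0} = T_{Y}|_{\mathcal{M}}$. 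On such a chart, both derived enhancements are locally equivalent to the derived zero locus of a section of the obstruction bundle $(E^{-1})^{\vee}$, so the homotopy sheaves of their structure sheaves are both computed by the same Koszul resolution — each $\pi_{i}$ identifies, locally, with $\wedge^{i}$ of the obstruction sheaf $h^{-1}(E^{\bullet})^{\vee}$. This gives the required equality of homotopy sheaves locally, hence globally in $G_{0}(\mathcal{M})$.

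The main obstacle is the globalisation: a priori, two quasi-smooth derived enhancements of $\mathcal{M}$ with the same POT need not be equivalent as derived stacks, and the intrinsic normal cone $C_{\mathcal{M}}$ is not flat over $h^{1}/h^{0}(E_{\bullet}^{\vee})$, so $\pi_{i}$ of the POT fibre product is a priori more subtle than a naive wedge power. The argument therefore hinges on showing that the $G_{0}$-class $\iota_{*}^{-1}[\mathcal{O}]$ of a quasi-smooth derived enhancement depends only on the POT $E^{\bullet}$ at the truncation — a statement that can be established by the Koszul computation above combined with a \v{C}ech/gluing argument on $\mathcal{M}$. As an alternative to term-by-term gluing, one can try to produce, via the universal property of the homotopy fibre product and the Behrend-Fantechi morphism $C_{\mathcal{M}}\to h^{1}/h^{0}(E_{\bullet}^{\vee})$, a canonical morphism of derived stacks $\mathcal{M}\times^{h}_{h^{1}/h^{0}(E^{\vee}_{\bullet})}C_{\mathcal{M}} \to \mathbb{R}\mathcal{M}$ refining $\Id_{\mathcal{M}}$, and check that it induces an equivalence of homotopy sheaves, from which the equality in $G_{0}(\mathcal{M})$ is immediate.
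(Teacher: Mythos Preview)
Your proposal has a genuine gap at the local Koszul step. The two derived enhancements are \emph{not} locally equivalent as derived schemes, and their homotopy sheaves do not coincide term by term in general. On a chart with $\mathcal{M}\hookrightarrow Y$ smooth and $E^{\bullet}=[E^{-1}\to\Omega_{Y}|_{\mathcal{M}}]$, the structure sheaf of $\mathbb{R}\mathcal{M}$ is the Koszul complex of a section $(f_{1},\ldots,f_{m})$ on $Y$, whereas the structure sheaf of $\mathcal{M}\times^{h}_{h^{1}/h^{0}(E^{\vee}_{\bullet})}C_{\mathcal{M}}$ is $\mathcal{O}_{\mathcal{M}}\otimes^{\mathbb{L}}_{\Sym E^{-1}}\mathcal{O}_{C_{\mathcal{M}/Y}}$, i.e.\ the Koszul complex of the tautological section of $(E^{-1})^{\vee}$ restricted to the \emph{cone} $C_{\mathcal{M}/Y}$. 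These are genuinely different complexes; in particular the claim that each $\pi_{i}$ is $\wedge^{i}$ of the obstruction sheaf is only true in the degenerate case where $\mathcal{M}$ is already smooth. Your globalisation worry is therefore secondary to the fact that the local identification you assert does not hold. The alternative you sketch (a direct map of derived stacks refining $\Id_{\mathcal{M}}$) would also have to contend with Remark~\ref{rk:different,retract}: the POT enhancement has a retract and its pulled-back cotangent complex splits as in \eqref{eq:spli}, whereas $j^{*}\mathbb{L}_{\mathbb{R}\mathcal{M}}$ does not, so no such equivalence exists in general.

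The paper's route is quite different and avoids any local comparison of homotopy sheaves. It proceeds via a \emph{deformation to the normal bundle} argument (Proposition~\ref{prop:def,normal,sheaf}): Gaitsgory's construction produces a family $\mathcal{Y}_{scaled}$ over $\mathbb{A}^{1}$ whose fibre at $0$ is $\mathbb{R}\mathcal{M}$ and whose fibre at $1$ is $\mathbb{V}(\mathbb{L}_{j}[-1])$, both sitting over $\mathcal{M}\times\mathbb{A}^{1}$. Then $\mathbb{A}^{1}$-invariance of $G$-theory ($i_{0}^{*}=i_{1}^{*}$ on $G_{0}$) yields $j_{*}^{-1}[\mathcal{O}_{\mathbb{R}\mathcal{M}}]=s_{*}^{-1}[\mathcal{O}_{\mathbb{V}(\mathbb{L}_{j}[-1])}]$ directly, without ever matching individual $\pi_{i}$'s. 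The remaining step, as noted in the remark following that proposition, is to show that the closed immersion $C_{\mathcal{M}}\hookrightarrow N_{\mathcal{M}}$ induces an equality of structure-sheaf classes in $G_{0}$, which connects $\mathbb{V}(\mathbb{L}_{j}[-1])$ (through diagram~\eqref{diag:cone}) to the POT fibre product. The virtue of this approach is that it replaces your delicate local-to-global patching by a single homotopy-invariance statement for $G$-theory.
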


\begin{remark}\label{rk:different,retract}
  Notice that the two enhancements $\mathbb{R}\mathcal{M}$ or
  $\mathcal{M}\times_{h^{1}/h^{0}(E^{\vee}_{\bullet})}^{h}C_{\mathcal{M}}$ are not the same. Indeed,
  the second one has a retract
  $r:\mathcal{M}\times_{h^{1}/h^{0}(E^{\vee}_{\bullet})}^{h}C_{\mathcal{M}}\to \mathcal{M}$ given in
  the diagram \eqref{eq:1} that is $r \circ \widetilde{j} =\Id_{\mathcal{M}}$ where $\widetilde{j}$
  is the closed immersion from $\mathcal{M}$ to
  $\mathcal{M}\times_{h^{1}/h^{0}(E^{\vee}_{\bullet})}^{h}C_{\mathcal{M}}$.  From this we get the
  following exact triangle of cotangent complexes
\begin{align}
&  \mathbb{L}_{\widetilde{j}}[-1]\to  \widetilde{j}^{*}\mathbb{L}_{\mathcal{M}\times_{h^{1}/h^{0}(E^{\vee}_{\bullet})}^{h}C_{\mathcal{M}}}
   \to \mathbb{L}_{\mathcal{M}}  \label{eq:6}\\
& r^{*}\mathbb{L}_{\mathcal{M}}\to
\mathbb{L}_{\mathcal{M}\times_{h^{1}/h^{0}(E^{\vee}_{\bullet})}^{h}C_{\mathcal{M}}}
\to\mathbb{L}_{r} \label{eq:4}
\end{align}
Applying $\widetilde{j}^{*}$ to the second line, we get
\begin{displaymath}
   \mathbb{L}_{\mathcal{M}}\to
\widetilde{j}^{*}\mathbb{L}_{\mathcal{M}\times_{h^{1}/h^{0}(E^{\vee}_{\bullet})}^{h}C_{\mathcal{M}}}
\to \widetilde{j}^{*}\mathbb{L}_{r} \label{eq:4}
\end{displaymath}
This means that \eqref{eq:6} has a splitting that is
\begin{align}\label{eq:spli}
  \widetilde{j}^{*}\mathbb{L}_{\mathcal{M}\times_{h^{1}/h^{0}(E^{\vee}_{\bullet})}^{h}C_{\mathcal{M}}}=\mathbb{L}_{\widetilde{j}}[-1]\oplus \mathbb{L}_{\mathcal{M}}
\end{align}
Comparing to the cotangent complex of $\mathbb{R}\mathcal{M}$ that has no reason to split, we get a
priori two different derived enhancement of $\mathcal{M}$.  
\end{remark}



Notice that in the work of  Fantechi-G\"ottsche \cite[Lemma
3.5]{MR2578301} (see also  Roy Joshua \cite{Roy-joshua}), they prove
that for a scheme $X$ with a perfect obstruction theory $E^{\bullet}:=[E^{-1}\to E^{0}]$, we have 
\begin{align}
  \label{eq:20}
\tau_{X}(\mathcal{O}_{X}^{\vir,\POT})=\Td (TX^{\vir})\cap [X^{\vir,\POT}]
\end{align}
where $TX^{\vir}\in G_{0}(X)$ is the class of $[E_{0}]-[E_{1}]$ where $[E_{0}\to E_{1}]$
is the dual complex of $E^{\bullet}$ and $\tau_{X}:G_{0}(X) \to A_{*}(X)_{\mathbb{Q}}$. 

Notice that the Formula \eqref{eq:20} with Theorem \ref{thm,O,pot=Dag} implies that
\begin{displaymath}
[\overline{\mathcal{M}}_{g,n}(X,\beta)]^{\vir,\POT}=\tau(\mathcal{O}_{\mathbb{R}\overline{\mathcal{M}}_{g,n}(X,\beta)})\Td(T_{\mathbb{R}\overline{\mathcal{M}}_{g,n}(X,\beta)})^{-1}
\end{displaymath}

\appendix


\section{Proof of theorem \ref{thm:colim}}
\label{sec:colimit-proof}

\begin{thm}
The map 
$$
f: \colim^{\mathrm{DM}}\, X_{\bullet, \beta} \to Z_\beta
$$
\noindent  of \cite[(4.2.9)]{2015arXiv150502964M} is an equivalence of derived Deligne-Mumford stacks.
\begin{proof}
It follows from the discussion in the proof of  \cite[Prop. 4.2.1]{2015arXiv150502964M} that 

\BarrBeckTriangle{\Perf(Z_\beta); \Perf(\colim^{\mathrm{DM}}\, X_{\bullet, \beta}); \lim_{\Delta} \Perf(X_{\bullet, \beta});f^*;g;h}

\noindent commutes with the morphism $h$ being an equivalence after h-descent for perfect complexes
\cite[4.12]{1402.3204} and the morphism $g$ being fully faithful after the result of gluing along
closed immersions \cite[16.2.0.1]{Lurie-SAG}. This immediately implies that the map $f^*$ is an
equivalence of categories because we have $g\circ f^{*}=h$ and $g$ is conservative as it is fully faithful.

As both source and target of $f$ are perfect stacks (the first being a colimit of perfect stacks
along closed immersions and second being pullback of perfect stacks), $f^*$ induces an equivalence

$$
\xymatrix{\Qcoh(Z_\beta)\ar[r]^-{f^*}&\Qcoh(\colim^{\mathrm{DM}}\, X_{\bullet, \beta})}$$

We conclude that $f$ is an equivalence using Tannakian duality \cite[9.2.0.2 ]{Lurie-SAG}.\\

\end{proof}
\end{thm}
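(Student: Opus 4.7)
The plan is to prove $f:\colim X_{\bullet,\beta}\to Z_\beta$ is an equivalence of derived Deligne--Mumford stacks by reducing to a statement about (perfect) quasi-coherent sheaves and then invoking Tannakian reconstruction. First, I would construct $f$ using the universal property of the colimit: the morphisms $\alpha_r: X_{r,\beta}\to Z_\beta$ of \eqref{eq:alpha,r} assemble, by compatibility with the semi-simplicial face maps (which are induced by gluing stable maps at a pair of marked points), into a cocone, hence a canonical map $f$ from the colimit.

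The heart of the argument is to show that the pullback $f^{*}:\Perf(Z_\beta)\to \Perf(\colim X_{\bullet,\beta})$ is an equivalence of $\infty$-categories. For this I would fit $f^{*}$ into a commutative triangle
\[
\xymatrix@R=1em{
\Perf(Z_\beta) \ar[rr]^-{f^{*}} \ar[rd]_-{h} & & \Perf(\colim X_{\bullet,\beta}) \ar[ld]^-{g} \\
& \lim_{\Delta}\Perf(X_{\bullet,\beta}) &
}
\]
where $h$ is the canonical restriction along the cocone $X_{r,\beta}\to Z_\beta$ and $g$ is induced by the universal property of the colimit. The map $h$ is an equivalence by h-descent for perfect complexes applied to the semi-simplicial diagram (the face maps being gluings of stable maps at marked points are closed immersions between derived stacks, which form h-covers in the relevant sense). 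The map $g$ is fully faithful by the gluing theorem for perfect complexes along closed immersions in derived algebraic geometry. Since $g\circ f^{*}=h$ is an equivalence and $g$ is fully faithful (hence conservative), $f^{*}$ is forced to be an equivalence on $\Perf$.

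Next I would extend this to $\Qcoh$: both $Z_\beta$ and $\colim X_{\bullet,\beta}$ are perfect stacks ($Z_\beta$ as a homotopy fiber product of perfect stacks built from $\mathbb{R}\overline{\mathcal{M}}_{0,n}(X,\beta)$ and $\overline{\mathcal{M}}_{0,m}$; the colimit as an iterated pushout of perfect stacks along closed immersions). For perfect stacks, $\Qcoh$ is the Ind-completion of $\Perf$, so $f^{*}:\Qcoh(Z_\beta)\to \Qcoh(\colim X_{\bullet,\beta})$ is an equivalence as well. Finally, Tannakian duality for derived Deligne--Mumford stacks promotes this equivalence of symmetric monoidal stable $\infty$-categories of quasi-coherent sheaves to an equivalence of the underlying stacks, giving the result.

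The main obstacle I expect is verifying the h-descent hypothesis for the specific semi-simplicial object $X_{\bullet,\beta}$: one must check that the face maps (and their iterates) are genuinely closed immersions of derived stacks and that the resulting augmented simplicial diagram over $Z_\beta$ satisfies the hypotheses ensuring $h$ is an equivalence. This is essentially a geometric verification that every point of $Z_\beta$ (a stable map whose stabilized source curve is a node of two smooth components) is assembled in a unique way from its normalization data, with the higher simplices accounting for the combinatorics of the $\mathbb{P}^1$-chains that can appear between the two sides (as in Figure~\ref{fig:tree,p1}). Once this geometric input is confirmed, the remaining sheaf-theoretic and Tannakian steps are formal.
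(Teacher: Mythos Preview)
Your proposal is correct and follows essentially the same approach as the paper: the commutative triangle on $\Perf$, the use of h-descent for $h$ and the gluing-along-closed-immersions result for full faithfulness of $g$, the passage to $\Qcoh$ via perfectness of the stacks, and the final appeal to Tannakian duality are exactly the steps the paper carries out. The obstacle you flag---verifying the h-descent hypotheses for the semi-simplicial diagram---is precisely what the paper defers to the discussion in the proof of Proposition~4.2.1 of \cite{2015arXiv150502964M}.
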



\section{Proof of Theorem \ref{thm:orientation}.(1)}
\label{sec:proof-theorem}

Let $X$ be a derived stack. We will use the linear derived stacks $\mathbb{V}(\mathbb{E})$ (See
\cite[p.200]{MR3285853} ) where $\mathbb{E}$ is a complex of quasi-coherent sheaf on $X$.  We have a
morphism $\mathbb{V}(\mathbb{E})\to X$ and a zero section $s:X\to \mathbb{V}(\mathbb{E})$.
One should understand that $\mathbb{V}(\mathbb{E})$ as a vector bundle where the fibers are
$\mathbb{E}$. 

It is a derived generalisation of $\mathbf{\Spec} \Sym \mathcal{E}$ for a coherent sheaf
$\mathcal{E}$. If $\mathbb{E}$ is a two terms complex with cohomology in degree $0$ and $1$, then we
have that $t_{0}(\mathbb{V}(\mathbb{E}^{\vee}[-1]))=[h^{1}/h^{0}(\mathbb{E})]$ (See \S 2 in
\cite{MR1437495} for the definition of the quotient stacks).

  Let recall some notation of \S \ref{sec:virtual-object-from} and \S \ref{sec:comp-theor-two}.  Let
  $g,n \in\mathbb{N}$ and $\beta\in H_{2}(X,\mathbb{Z})$.  Denote by $j$ the closed immersion
  $\overline{\mathcal{M}}_{g,n}(X,\beta)\to \mathbb{R}\overline{\mathcal{M}}_{g,n}(X,\beta)$. To
  simplify the notation, put $\mathcal{M}=\overline{\mathcal{M}}_{g,n}(X,\beta)$ and
  $\mathbb{R}\mathcal{M}=\mathbb{R}\overline{\mathcal{M}}_{g,n}(X,\beta)$.

From the exact triangle
\[
j^{*}\mathbb{L}_{\mathbb{R}\mathcal{M}} \to \mathbb{L}_{\mathcal{M}} \to \mathbb{L}_{j}
\]
We deduce that following cartesian diagram 
  \begin{align}
     \xymatrix{
\mathbb{V}(\mathbb{L}_{j}[-1]) \ar[r] \ar[d]& \mathbb{V}(\mathbb{L}_{\mathcal{M}}[-1]) \ar[d] \\ \mathcal{M}
       \ar[r]& \mathbb{V}(j^{*}\mathbb{L}_{\mathbb{R}\mathcal{M}}[-1])
}
   \end{align}
Recall that $j^{*}\mathbb{L}_{\mathbb{R}\overline{\mathcal{M}}_{g,n}(X,\beta)}$ is a two terms
complex in degree $-1$ and $0$ but in general it is not the case for $\mathbb{L}_{j}$ and $\mathbb{L}_{\overline{\mathcal{M}}_{g,n}(X,\beta)}$.
Comparing with Behrend-Fantechi, we have $   t_{0}(\mathbb{V}(\mathbb{L}_{\mathcal{M}}[-1]))$ is the
intrinsic normal sheaf $N_{\mathcal{M}}$ (See \S \ref{sec:virtual-object-from}) and we have the
following cartesian diagram 

  \begin{align}\label{diag:cone}
     \xymatrix{\mathcal{M}\times^{h}_{\mathbb{V}(j^{*}\mathbb{L}_{\mathbb{R}\mathcal{M}}[-1])}
    C_{\mathcal{M}} \ar[r] \ar[d]& C_{\mathcal{M}} \ar[d]\\
\mathbb{V}(\mathbb{L}_{j}[-1]) \ar[r] \ar[d]& \mathbb{V}(\mathbb{L}_{\mathcal{M}}[-1]) \ar[d] \\ \mathcal{M}
       \ar[r]& \mathbb{V}(j^{*}\mathbb{L}_{\mathbb{R}\mathcal{M}}[-1])
}
   \end{align}

\begin{prop}\label{prop:def,normal,sheaf}
  Let $g,n \in\mathbb{N}$ and $\beta\in H_{2}(X,\mathbb{Z})$. 
Denote by $j$ the closed immersion $\overline{\mathcal{M}}_{g,n}(X,\beta)\to
\mathbb{R}\overline{\mathcal{M}}_{g,n}(X,\beta)$ and by
$s:\overline{\mathcal{M}}_{g,n}(X,\beta)\to \mathbb{V}({\mathbb{L}_{j}[-1]})$ be the zero section.
We have the following equality in $G_{0}(\overline{\mathcal{M}}_{g,n}(X,\beta))$
  \begin{align*}
    \mathcal{O}_{\overline{\mathcal{M}}_{g,n}(X,\beta)}^{\vir,\DAG}:=j_{*}^{-1}    \mathcal{O}_{\mathbb{R}\overline{\mathcal{M}}_{g,n}(X,\beta)} = s_{*}^{-1}(\mathcal{O}_{\mathbb{V}(\mathbb{L}_{j})[-1]})
  \end{align*}
\end{prop}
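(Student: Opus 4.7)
The plan is to exploit Theorem~\ref{thm,O,pot=Dag}, which identifies the DAG virtual sheaf with the Behrend--Fantechi--Lee POT virtual sheaf, together with the cartesian diagram~\eqref{diag:cone} relating $\mathbb{V}(\mathbb{L}_{j}[-1])$ to the intrinsic normal cone $C_{\mathcal{M}}$.

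First I would invoke Theorem~\ref{thm,O,pot=Dag} to write $j_{*}^{-1}\mathcal{O}_{\mathbb{R}\mathcal{M}} = \widetilde{j}_{*}^{-1}\mathcal{O}_{\mathcal{M}\times^{h}_{h^{1}/h^{0}(E^{\vee})}C_{\mathcal{M}}}$ in $G_0(\mathcal{M})$, where $E^{\bullet}=j^{*}\mathbb{L}_{\mathbb{R}\mathcal{M}}$. Because $\mathcal{M}$ and $C_{\mathcal{M}}$ are classical, the derived fibre product on the right only depends on the truncation $h^{1}/h^{0}(E^{\vee})$ of $\mathbb{V}(j^{*}\mathbb{L}_{\mathbb{R}\mathcal{M}}[-1])$, and hence agrees with $\mathcal{M}\times^{h}_{\mathbb{V}(j^{*}\mathbb{L}_{\mathbb{R}\mathcal{M}}[-1])}C_{\mathcal{M}}$, the top entry of diagram~\eqref{diag:cone}. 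Applying the middle cartesian square of~\eqref{diag:cone}, this rewrites as the derived fibre product
$$\mathbb{V}(\mathbb{L}_{j}[-1])\times^{h}_{\mathbb{V}(\mathbb{L}_{\mathcal{M}}[-1])}C_{\mathcal{M}},$$
exhibiting the POT enhancement as the derived pullback of the closed immersion $C_{\mathcal{M}}\hookrightarrow \mathbb{V}(\mathbb{L}_{\mathcal{M}}[-1])$ along the zero section of $\mathbb{V}(\mathbb{L}_{j}[-1])\to \mathbb{V}(\mathbb{L}_{\mathcal{M}}[-1])$.

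The heart of the argument is then to apply the refined Gysin formula~\eqref{eq:19} to this cartesian square. Combined with derived base change and Lemma~\ref{lem:K,iso}, this reduces the proposition to showing that the Koszul class cut out by $C_{\mathcal{M}}$ in $\mathbb{V}(\mathbb{L}_{\mathcal{M}}[-1])$ and the class of the ambient bundle itself agree after restriction to the zero section. This is the derived analogue of the classical Behrend--Fantechi specialisation: the natural $\mathbb{G}_{m}$-scaling action on $\mathbb{V}(\mathbb{L}_{\mathcal{M}}[-1])$ preserves $C_{\mathcal{M}}$ and degenerates it to the zero section, so by $\mathbb{A}^{1}$-homotopy invariance of $G_0$-theory the two pullbacks coincide. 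This is precisely the derived upgrade of the argument underlying the orientation axioms in $G_0$-theory, cf.~\cite{MR2040281}.

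The main obstacle is rigorously justifying the specialisation step in the derived/stacky setting, where $\mathbb{V}(\mathbb{L}_{\mathcal{M}}[-1])$ carries nontrivial higher homotopy coming from the unbounded amplitude of $\mathbb{L}_{\mathcal{M}}$, and the ``bundle'' structure is not literally classical. To bypass this difficulty, one can use the quasi-smoothness of $\mathbb{R}\overline{\mathcal{M}}_{g,n}(X,\beta)$ to reduce locally to the case where $\mathbb{R}\mathcal{M}$ is the derived zero locus of a section $\sigma\colon V\to F$ of a vector bundle over a smooth ambient scheme; in this local model both $j_{*}^{-1}\mathcal{O}_{\mathbb{R}\mathcal{M}}$ and $s_{*}^{-1}\mathcal{O}_{\mathbb{V}(\mathbb{L}_{j}[-1])}$ are computed by explicit Koszul resolutions whose alternating sums both equal $\lambda_{-1}(F^{\vee}|_{\mathcal{M}})$, making the equality formal. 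Patching the local computation along an affine chart cover of $\mathcal{M}$ and appealing to descent concludes the argument.
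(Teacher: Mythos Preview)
Your approach has a genuine circularity problem. You open by invoking Theorem~\ref{thm,O,pot=Dag} to identify $j_*^{-1}\mathcal{O}_{\mathbb{R}\mathcal{M}}$ with the POT virtual sheaf, but in the paper's logical architecture this proposition is explicitly a \emph{step towards} proving Theorem~\ref{thm,O,pot=Dag}: the remark immediately following the proposition says so, and the ``last step'' left over is precisely the comparison of the cone $C_{\mathcal{M}}$ with the normal sheaf in $G_0$. So you are assuming what the proposition is meant to help establish.

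Independently of the circularity, your specialisation step is not correctly formulated. The $\mathbb{G}_m$-scaling on $\mathbb{V}(\mathbb{L}_{\mathcal{M}}[-1])$ does preserve $C_{\mathcal{M}}$, but scaling a cone does not ``degenerate it to the zero section'' in any way that yields the identity you need in $G_0$; the cone is already $\mathbb{G}_m$-invariant, so the family is constant. What one actually wants is a deformation of $\mathbb{R}\mathcal{M}$ itself to $\mathbb{V}(\mathbb{L}_j[-1])$ over $\mathbb{A}^1$, and this is exactly what the paper uses: Gaitsgory's derived deformation to the normal bundle produces a derived stack $\mathcal{Y}_{scaled}$ over $\mathcal{M}\times\mathbb{A}^1$ whose fibres over $0$ and $1$ are $\mathbb{R}\mathcal{M}$ and $\mathbb{V}(\mathbb{L}_j[-1])$ respectively, with $\mathcal{M}\times\mathbb{A}^1$ sitting inside via a section $\sigma$. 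One then computes $s_*^{-1}\mathcal{O}_{\mathbb{V}(\mathbb{L}_j[-1])} = i_1^*\sigma_*^{-1}\mathcal{O}_{\mathcal{Y}_{scaled}}$ and $j_*^{-1}\mathcal{O}_{\mathbb{R}\mathcal{M}} = i_0^*\sigma_*^{-1}\mathcal{O}_{\mathcal{Y}_{scaled}}$ by derived base change, and the two agree by $\mathbb{A}^1$-invariance of $G$-theory. This bypasses the cone entirely and never touches Theorem~\ref{thm,O,pot=Dag}.

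Your fallback local-Koszul argument is closer in spirit to a valid alternative, but as written it is only a sketch: you would need to justify that the local identifications glue in $G_0$ (which does not satisfy Zariski descent on the nose; one needs localisation sequences), and that the local model for $\mathbb{V}(\mathbb{L}_j[-1])$ really computes to the same Koszul class. The paper's deformation argument is both shorter and global.
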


\begin{proof}
From Gaitsgory (see Proposition 2.3.6 p 18 Chapter IV.5 \cite{gaitsgory}), we can construct an derived stack $\mathcal{Y}_{scaled}$ such that the following
diagram has two homotopical fiber products
\begin{align*}
  \xymatrix{\mathbb{R}\mathcal{M} \ar[r]^-{h}& \mathcal{Y}_{scaled}& \ar[l]_-{v} \mathbb{V}(\mathbb{L}_{j}[-1])\\
\mathcal{M}\times \{0\}\ar[r]^-{i_{0}} \ar[u]^{j} & \mathcal{M}\times \mathbb{A}^{1} \ar[u]_{\sigma}&\ar[l]_-{i_{1}}
\mathcal{M}\times \{1\} \ar[u]^{s}}
\end{align*}

We have
\begin{align*}
  (s_{*})^{-1}\mathcal{O}_{\mathbb{V}(\mathbb{L}_{j}[-1])}&
= (s_{*})^{-1}v^{*}\mathcal{O}_{\mathcal{Y}_{scaled}}\\
&=i_{1}^{*}(\sigma_{*})^{-1}\mathcal{O}_{\mathcal{Y}_{scaled}}\\
&=i_{0}^{*}(\sigma_{*})^{-1}\mathcal{O}_{\mathcal{Y}_{scaled}}\\
\end{align*}
The last equality follows from the $\mathbb{A}^{1}$- invariance of the $G$-theory. That is, we have that $G_{0}(\mathcal{M}\times
\mathbb{A}^{1}) \to G_{0}(\mathcal{M})$ and $i_{0}^{*}=(\pi^{*})^{-1}=i_{1}^{*}$ where $\pi$ is the projection.
Applying the same computation as above with the other homotopical fiber product, we get  Formula.
\end{proof}

\begin{remark}
  This statement is a first step in proving Theorem  \ref{thm,O,pot=Dag}. The last step is to prove
  that the inclusion $C_{\mathcal{M}}\to N_{\mathcal{M}}$ induces an equality of the structure sheaf
  in $G_{0}$-theory.
\end{remark}

\begin{cor}\label{prop:degree,0}
For stable maps of degree $0$, we have that 
\[
\mathcal{O}_{\overline{\mathcal{M}}_{g,n}(X,0)}^{\vir,\DAG}=\sum_{i}(-1)^{i}\wedge^{i}(TX\boxtimes R^{1}\pi_{*}\mathcal{O}_{\mathcal{C}})
\]
\end{cor}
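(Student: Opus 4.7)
The plan is to combine Proposition \ref{prop:def,normal,sheaf} with an explicit Koszul computation. By that proposition, $[\mathcal{O}^{\vir,\DAG}] = s_*^{-1}[\mathcal{O}_{\mathbb{V}(\mathbb{L}_j[-1])}]$, so the work reduces to (i) identifying $\mathbb{L}_j[-1]$ in the degree-zero case, and (ii) computing the homotopy sheaves of $\mathcal{O}_{\mathbb{V}(\mathbb{L}_j[-1])}$.

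\textbf{Step (i): Identification of $\mathbb{L}_j[-1]$.} When $\beta = 0$, every stable map has constant underlying morphism, so the truncation is $\mathcal{M} = \overline{\mathcal{M}}_{g,n} \times X$ and the universal map over $\mathcal{M}$ is the second projection $\mathcal{C}_{g,n} \times X \to X$, with $\pi: \mathcal{C}_{g,n} \to \overline{\mathcal{M}}_{g,n}$ the universal curve. The standard formula for the cotangent complex of a derived mapping stack (see \cite{MR3341464}), combined with the K\"unneth/projection formula for $R\pi_*$, gives
\[
  j^{*}\mathbb{L}_{\mathbb{R}\mathcal{M}/\overline{\mathcal{M}}_{g,n}} \simeq (R\pi_*\mathcal{O}_{\mathcal{C}} \boxtimes T_X)^\vee \simeq \Omega_X \oplus F^\vee[1],
\]
where $F := R^1\pi_*\mathcal{O}_{\mathcal{C}} \boxtimes T_X$, using the splitting $R\pi_*\mathcal{O}_{\mathcal{C}} \simeq \mathcal{O} \oplus R^1\pi_*\mathcal{O}_{\mathcal{C}}[-1]$ of the direct image of the universal curve. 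A diagram chase on the long exact sequence of cohomology sheaves attached to the cotangent triangle $j^*\mathbb{L}_{\mathbb{R}\mathcal{M}} \to \mathbb{L}_{\mathcal{M}} \to \mathbb{L}_j$, combined with smoothness of $\mathcal{M}$, collapses everything to give $\mathbb{L}_j \simeq F^\vee[2]$, hence $\mathbb{L}_j[-1] \simeq F^\vee[1]$, a locally free sheaf placed in cohomological degree $-1$.

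\textbf{Step (ii): Koszul computation.} The structure sheaf of the derived linear stack $\mathbb{V}(F^\vee[1])$ is $\Sym_{\mathcal{O}_{\mathcal{M}}}(F^\vee[1])$. Since $F^\vee[1]$ lies in odd cohomological degree $-1$, the Koszul sign rule simplifies this graded algebra to $\bigoplus_{i \geq 0} \wedge^i F^\vee[i]$ with vanishing differentials; equivalently, $\mathbb{V}(F^\vee[1])$ is the derived self-intersection $\mathcal{M} \times^{h}_{\mathbb{V}(F^\vee)} \mathcal{M}$ of the zero section of the total space of $F$, and the Koszul resolution of $\mathcal{O}_{\mathcal{M}}$ over $\mathcal{O}_{\mathbb{V}(F^\vee)}$ computes the Tor sheaves as $\wedge^i F^\vee$. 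Either way, $\pi_i(\mathcal{O}_{\mathbb{V}(F^\vee[1])}) = \wedge^i F^\vee$ for $i \geq 0$.

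Combining the two steps via Lemma \ref{lem:K,iso} then yields
\[
  [\mathcal{O}^{\vir,\DAG}_{\overline{\mathcal{M}}_{g,n}(X,0)}] \;=\; s_*^{-1}[\mathcal{O}_{\mathbb{V}(F^\vee[1])}] \;=\; \sum_{i \geq 0} (-1)^i \wedge^i F^\vee \;=\; \sum_{i \geq 0} (-1)^i \wedge^i (R^1\pi_*\mathcal{O}_{\mathcal{C}} \boxtimes T_X)^\vee,
\]
which is the desired formula. The main technical ingredient is the Koszul-sign computation of $\Sym_{\mathcal{O}_{\mathcal{M}}}(F^\vee[1])$ (equivalently, the identification of $\mathbb{V}(F^\vee[1])$ with the derived self-intersection of a zero section); once this is granted, everything else reduces to routine bookkeeping of cotangent triangles and the K\"unneth formula for the proper flat family $\pi$.
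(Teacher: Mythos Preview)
Your proof is correct and follows essentially the same approach as the paper's: both invoke Proposition~\ref{prop:def,normal,sheaf} to reduce to computing $s_*^{-1}[\mathcal{O}_{\mathbb{V}(\mathbb{L}_j[-1])}]$, then identify $\mathbb{L}_j[-1]$ as a shifted locally free sheaf and conclude via the Koszul/$\Sym$ formula. The paper phrases Step~(i) as a splitting of the cotangent triangle coming from the retract $\mathbb{R}\mathcal{M}\to\mathcal{M}$ (evaluation plus stabilisation), whereas you extract the same information from the mapping-stack formula for $j^*\mathbb{L}_{\mathbb{R}\mathcal{M}/\overline{\mathcal{M}}_{g,n}}$ together with the splitting $R\pi_*\mathcal{O}_{\mathcal{C}}\simeq \mathcal{O}\oplus R^1\pi_*\mathcal{O}_{\mathcal{C}}[-1]$; these are two presentations of the same splitting. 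Your Step~(ii) simply unpacks what the paper calls a ``standard computation''.
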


\begin{remark}
  Notice that in the case of $\beta=0$, we have that $\overline{\mathcal{M}}_{g,n}(X,\beta=0)
  =\overline{\mathcal{M}}_{g,n}\times X$ which is smooth. Nevertheless, it has a derived
  enhancement, given by the $\mathbb{R}\Map$ which has a retract given by the projection and the
  evaluation. For $\beta\ne 0$, this retract does not exist.

\end{remark}

\begin{proof}

For $\beta=0$, the smoothness of $\mathcal{M}$ implies that the intrinsic normal cone is the
intrinsic normal sheaf that is we have the 
$C_{\mathcal{M}}=\mathbb{V}(\mathbb{L}_{\mathcal{M}}[-1])$ in the diagram \eqref{diag:cone}.
The second thing which is different is that $j:\mathcal{M}\to \mathbb{R}\mathcal{M}$ has a
retract. This implies that $\mathbb{L}_{j}[-1]\simeq\mathbb{L}_{\mathcal{M}}[-1]\oplus j^{*}\mathbb{L}_{\mathbb{R}\mathcal{M}}$.
Hence the Proposition \ref{prop:def,normal,sheaf}, implies that we need to compput
$s_{*}^{-1}\mathcal{O}_{\mathbb{V}(\mathbb{L}_{j}[-1])}$ which is by standard computation
$\sum_{i}(-1)^{i}\wedge^{i}(TX \boxtimes R^{1}\pi_{*}\mathcal{O}_{\mathcal{C}})$ where
$\mathcal{C}$ is the universal curve of $\overline{\mathcal{M}}_{g,n}$.


\end{proof}

From the proof, we see that the RHS of the formula is the structure sheaf of
$\mathbb{V}(\mathbb{L}_{j}[-1])$. In fact, we think that
$\mathbb{R}\overline{\mathcal{M}}_{g,n}(X,0)$ is isomorphic to
$\mathbb{V}(\mathbb{L}_{j}[-1])$. This should follow from a general argument that we will detail in
the next section for the affine case.




\section{Alternative proof of Corollary \ref{prop:degree,0} in the affine case.}

\begin{prop}\label{prop:qsmooth-dagvectbundle}
Let $F=\Spec A$ be an affine quasi-smooth algebraic derived stack. Let $F_{0}=\Spec \pi_{0}(A)$ its
truncation and denote $j:F_{0}\to F$ its closed immersion. Assume that $F_{0}$ is smooth and that $F$ admit a retract
$r:F\to F_{0}$. Then $F=\mathbb{V}(\mathbb{L}_{j}[-1])$. 
\end{prop}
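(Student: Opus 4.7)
The plan is to use the retract $r \colon F \to F_0$ to exhibit $F$ as an augmented derived $B$-algebra (with $B = \pi_0(A)$), construct a canonical comparison morphism with $\mathbb{V}(\mathbb{L}_j[-1])$ over $F_0$, and verify it is an equivalence by a cotangent-complex and Postnikov tower argument.

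First I would extract the cotangent-complex data encoded in the retract. Applied to the identity factorisation $F_0 \xrightarrow{j} F \xrightarrow{r} F_0$, the cofiber sequence of cotangent complexes together with $\mathbb{L}_{\mathrm{id}} = 0$ yields $j^{*}\mathbb{L}_{r} \simeq \mathbb{L}_{j}[-1]$. The retract further splits the cofiber sequence $j^{*}\mathbb{L}_{F} \to \mathbb{L}_{F_0} \to \mathbb{L}_{j}$, giving the canonical decomposition
\[
j^{*}\mathbb{L}_{F} \;\simeq\; \mathbb{L}_{F_0} \,\oplus\, \mathbb{L}_{j}[-1].
\]
Combined with quasi-smoothness of $F$, smoothness of $F_0$, and the vanishing of the classical-level conormal (since $j$ is an isomorphism on truncations), $\mathbb{L}_j[-1]$ is identified with a shifted vector bundle $N[1]$ on $F_0$, where $N$ plays the role of the virtual conormal to the truncation.

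Next I would construct the comparison. By the universal property of the linear stack $\mathbb{V}(\mathbb{L}_j[-1]) = \Spec_{F_0}\Sym_{\mathcal{O}_{F_0}}(\mathbb{L}_j[-1])$, giving an $F_0$-morphism into it is equivalent to giving an $\mathcal{O}_{F_0}$-linear map from $\mathbb{L}_j[-1]$ into the structure sheaf of the source. For source $F$, the natural such map is supplied by the universal derivation $\mathcal{O}_F \to \mathbb{L}_{F/F_0}$ pulled back along $j$, exploiting the identification $j^{*}\mathbb{L}_{F/F_0} \simeq \mathbb{L}_j[-1]$ from paragraph one. On the affine side, this yields a morphism of augmented simplicial commutative $B$-algebras $\Phi^{\#} \colon \Sym_B(\mathbb{L}_j[-1]) \to A$, which is tautologically an isomorphism on $\pi_0$ and induces the identity on the cotangent complex pulled back along the augmentation.

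Finally, to show $\Phi^{\#}$ is an equivalence I would argue by induction on the Postnikov tower $\tau_{\le n}$ of both sides. Each stage is a square-zero extension of the previous one classified by a map into a shift of the cotangent complex at the augmentation; the section provided by $r$ trivialises the obstruction class at each stage, and agreement of cotangent complexes from paragraph one matches $\pi_n$ on both sides. The main obstacle will be making this induction precise: concretely, it amounts to a local structure theorem saying that a connective quasi-smooth augmented $B$-algebra with $\pi_0 = B$ is, \'etale-locally on $F_0$, a free Koszul algebra on its virtual conormal, i.e.\ of the form $\Sym_B(M[1])$ for a vector bundle $M$. Once this local model is established, the canonical splitting of the cotangent complex from paragraph one glues the local identifications into the global equivalence, yielding $F \simeq \mathbb{V}(\mathbb{L}_j[-1])$.
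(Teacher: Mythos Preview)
Your first two paragraphs are correct and parallel the paper's preliminary Lemma: the retract splits $j^*\mathbb{L}_F \simeq \mathbb{L}_{F_0}\oplus \mathbb{L}_j[-1]$ and, together with quasi-smoothness of $F$ and smoothness of $F_0$, forces $\mathbb{L}_j[-1]\simeq \pi_1(A)[1]$ with $\pi_1(A)$ locally free. Your comparison map $\Phi^\#\colon \Sym_B(\mathbb{L}_j[-1])\to A$ is essentially the paper's map $f$, although the paper builds it more concretely from a chosen splitting $\pi_1(A)[1]\hookrightarrow A$ rather than by invoking a universal derivation.

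The genuine gap is in your last paragraph. The ``local structure theorem'' you appeal to --- that a quasi-smooth augmented $B$-algebra with $\pi_0=B$ is \'etale-locally of the form $\Sym_B(M[1])$ --- is exactly the statement of the proposition (in the affine setting the ``\'etale-locally'' is superfluous), so the argument is circular as written. The Postnikov sketch preceding it does not close on its own: knowing that the cotangent complexes agree at the augmentation does not by itself match the $k$-invariants of the two towers, and the role you assign to the retract $r$ (``trivialises the obstruction class at each stage'') is not justified.

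The paper finishes differently and this is where the actual content lies. After building $f\colon B\to A$, it constructs a map $g\colon A\to B$ by climbing the Postnikov tower of $B$: the quasi-smoothness of $A$ (tor-amplitude of $\mathbb{L}_A$ in $[-1,0]$) forces $\pi_i\Map(\mathbb{L}_A,\pi_n(B)[n+1])=0$ for $i=0,1$ and $n\geq 2$, so the lifts exist and are essentially unique. The universal property of $\Sym$ then gives $g\circ f=\mathrm{id}_B$, hence injectivity of each $\pi_i(f)$. Surjectivity is obtained by a separate argument: one presents $A$ as a derived complete intersection $k\otimes^{\mathbb{L}}_{k[y_1,\dots,y_m]}k[x_1,\dots,x_n]$ and compares cotangent complexes. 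Your route could be repaired more directly by showing that $\Phi^\#$ induces an equivalence on relative cotangent complexes over $B$ (not just after pullback along $j$) and then invoking the standard fact that a map of connective algebras which is an isomorphism on $\pi_0$ and has vanishing relative cotangent complex is an equivalence; but that verification is the missing work, not something you can outsource to a structure theorem.
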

 
This proposition is a way of proving Corollary \ref{prop:degree,0} in the affine case without using
the deformation argument of Gaitsgory. We believe that we can drop the affine assumption in the
previous proposition.

Notice that we can drop the existence of the retract in the hypothesis because when $F{0}$ is
smooth, there always exists a retract (see the Remark \ref{rmk:retract}).



\medskip
\begin{lem}\label{lem:appen}
  With the previous hypothesis, we have
  \begin{align*} 
 \pi_{0}(\mathbb{L}_{j})=\pi_{1}(\mathbb{L}_{j})=0\\
    \pi_{2}(\mathbb{L}_{j})=\pi_{1}(j^{*}\mathbb{L}_{F})=\pi_{2}(\mathbb{L}_{\pi_{0}(A)/\tau_{\leq
      1}A}) = \pi_{1}(A)\\
\mathbb{L}_{j}[-1] \simeq \pi_{1}(A)[1]
  \end{align*}
\end{lem}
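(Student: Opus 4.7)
The plan is to combine the cotangent fiber sequence attached to the closed immersion $j \colon F_0 \hookrightarrow F$ with the first two steps of the Postnikov tower of $A$. First, applying $\pi_*$ to
\[ j^*\mathbb{L}_F \to \mathbb{L}_{F_0} \to \mathbb{L}_j, \]
I use quasi-smoothness of $F$ (so $\mathbb{L}_F$ has Tor-amplitude $[-1,0]$ cohomologically, hence $j^*\mathbb{L}_F = \mathbb{L}_A \otimes^{\mathbb{L}}_A \pi_0(A)$ is concentrated in homological degrees $0$ and $1$), smoothness of $F_0$ (so $\mathbb{L}_{F_0}$ is concentrated in degree $0$), and the standard fact that the structural map $\pi_0(j^*\mathbb{L}_F) \to \pi_0(\mathbb{L}_{F_0})$ is the identity on $\Omega^1_{\pi_0(A)}$. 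A short diagram chase in the long exact sequence immediately yields $\pi_i(\mathbb{L}_j) = 0$ for $i \in \{0,1\}$ and for $i \geq 3$, together with a canonical isomorphism $\pi_2(\mathbb{L}_j) \cong \pi_1(j^*\mathbb{L}_F)$. This disposes of the vanishing claims of the lemma and reduces everything to identifying $\pi_1(j^*\mathbb{L}_F)$ with $\pi_1(A)$.

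Next, I factor $j$ through the Postnikov truncation $A \to \tau_{\leq 1}A \to \pi_0(A)$. By the standard classification of Postnikov extensions for connective $E_\infty$-rings \cite{Lurie-higher-algebra}, the map $\tau_{\leq 1}A \to \pi_0(A)$ is a square-zero extension with kernel $\pi_1(A)[1]$, hence
\[ \mathbb{L}_{\pi_0(A)/\tau_{\leq 1}A} \simeq \pi_1(A)[2], \]
giving $\pi_2(\mathbb{L}_{\pi_0(A)/\tau_{\leq 1}A}) = \pi_1(A)$. Feeding this into the transitivity triangle
\[ \mathbb{L}_{\tau_{\leq 1}A/A} \otimes^{\mathbb{L}}_{\tau_{\leq 1}A} \pi_0(A) \to \mathbb{L}_j \to \mathbb{L}_{\pi_0(A)/\tau_{\leq 1}A}, \]
it remains to show the leftmost term is $3$-connective, so that its long exact sequence identifies $\pi_2(\mathbb{L}_j)$ with $\pi_1(A)$.

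The connectivity estimate is the main obstacle: the naive bound from the $2$-connectivity of the map $A \to \tau_{\leq 1}A$ only forces $\mathbb{L}_{\tau_{\leq 1}A/A}$ to be $2$-connective, which is one short of what is needed. To upgrade this I will insert one more stage of the tower. The Postnikov extension $\tau_{\leq 2}A \to \tau_{\leq 1}A$ with kernel $\pi_2(A)[2]$ has relative cotangent $\pi_2(A)[3]$, which is $3$-connective, while $A \to \tau_{\leq 2}A$ is $3$-connective and therefore, by the standard connectivity bound for the cotangent complex, its relative cotangent is also $3$-connective. The transitivity triangle
\[ \mathbb{L}_{\tau_{\leq 2}A/A} \otimes^{\mathbb{L}}_{\tau_{\leq 2}A} \tau_{\leq 1}A \to \mathbb{L}_{\tau_{\leq 1}A/A} \to \mathbb{L}_{\tau_{\leq 1}A/\tau_{\leq 2}A} \]
then sandwiches $\mathbb{L}_{\tau_{\leq 1}A/A}$ between two $3$-connective objects, so it is itself $3$-connective; tensoring with the connective module $\pi_0(A)$ preserves this. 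Plugging back into the earlier triangle yields $\pi_2(\mathbb{L}_j) \cong \pi_1(A)$, and combined with the vanishing from paragraph one, $\mathbb{L}_j$ is concentrated in degree $2$ with $\pi_2 = \pi_1(A)$. Hence $\mathbb{L}_j \simeq \pi_1(A)[2]$ and $\mathbb{L}_j[-1] \simeq \pi_1(A)[1]$, completing all the equalities claimed.
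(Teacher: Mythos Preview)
Your proof is correct and follows essentially the same strategy as the paper: analyze the triangle $j^*\mathbb{L}_F \to \mathbb{L}_{F_0} \to \mathbb{L}_j$ to get the vanishing and the identification $\pi_2(\mathbb{L}_j)\cong \pi_1(j^*\mathbb{L}_F)$, then pass through the Postnikov stage $\tau_{\le 1}A$ to identify this with $\pi_1(A)$. The only difference is in how you justify that $\mathbb{L}_{\tau_{\le 1}A/A}$ is $3$-connective: the paper invokes this directly (citing Porta, Cor.~5.5), using that the fiber of $A\to\tau_{\le 1}A$ is $2$-connective hence the relative cotangent is $3$-connective, whereas you rederive it by inserting the extra stage $\tau_{\le 2}A$ and applying the weaker bound twice. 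Your detour is a perfectly valid and self-contained proof of that connectivity estimate, so nothing is lost; you are just being more cautious than necessary about what counts as the ``naive bound''.
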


\begin{proof}
We have the triangle
\begin{displaymath}
  j^{*}\mathbb{L}_{F}\to\mathbb{L}_{F_{0}} \to \mathbb{L}_{j}.
\end{displaymath}

Applying the hypothesis, we get
\begin{enumerate}
\item As $F$ is quasi-smooth, we have that $\pi_{2}(j^{*}\mathbb{L}_{F})=0$.
\item As $F_{0}$ is smooth, we have that $\pi_{2}(\mathbb{L}_{F_{0}})=\pi_{1}(\mathbb{L}_{F_{0}})=0$.
\item As $j^{*}\mathbb{L}_{F}\to \mathbb{L}_{F_{0}}$ is a perfect obstruction theory, we deduce
  $\pi_{0} (j^{*}\mathbb{L}_{F}) \simeq \pi_{0}(\mathbb{L}_{F_{0}})$ and $\pi_{1}
  (j^{*}\mathbb{L}_{F}) \to \pi_{1} (\mathbb{L}_{F_{0}})$ is onto. 
\end{enumerate}
        
Applying the three properties above to the associated long exact sequence, we get
\begin{enumerate}
\item As $F$ is quasi-smooth, we have that $\pi_{2}(j^{*}\mathbb{L}_{F})=0$.
\item As $F_{0}$ is smooth, we have that $\pi_{2}(\mathbb{L}_{F_{0}})=\pi_{1}(\mathbb{L}_{F_{0}})=0$.
\item As $j^{*}\mathbb{L}_{F}\to \mathbb{L}_{F_{0}}$ is a perfect obstruction theory, we deduce
  $\pi_{0} (j^{*}\mathbb{L}_{F}) \simeq \pi_{0}(\mathbb{L}_{F_{0}})$ and $\pi_{1}
  (j^{*}\mathbb{L}_{F}) \to \pi_{1} (\mathbb{L}_{F_{0}})$ is onto. 
\end{enumerate}

\begin{tikzpicture}
\matrix[matrix of nodes,ampersand replacement=\&, column sep=0.5cm, row sep=0.5cm](m)
{
        
 $0$ \& $0 $ \& $\pi_{2} (\mathbb{L}_{j})$ \\
 $\pi_{1} (j^{*}\mathbb{L}_{F})$ \& $0$ \& $0$ \\
 $ \pi_{0} (j^{*}\mathbb{L}_{F})$ \& $\pi_{0}(\mathbb{L}_{F_{0}})$ \& $0$ \\
};
\draw[->] (m-1-1) edge (m-1-2)
          (m-1-2) edge (m-1-3)
          (m-1-3) edge[out=0, in=180] (m-2-1)
          (m-2-1) edge (m-2-2)
          (m-2-2) edge (m-2-3)
          (m-2-3) edge[out=0, in=180] (m-3-1)
          (m-3-1) edge    (m-3-2)
          (m-3-2) edge (m-3-3);
\end{tikzpicture}

We conclude that
\begin{enumerate}
\item  $ \pi_{2}(\mathbb{L}_{j})=\pi_{1}(j^{*}\mathbb{L}_{F})$
\item $\mathbb{L}_{j}$ is $2$-connective.
\end{enumerate}
To prove the second equality of the lemma, we use the Postnikov tower that is we consider the
closed immersion $j_{1}:F_{0}\to F_{1} $ and $j_{2}:F_{1}\to F$ where $F_{1}$ is $\Spec \tau_{\leq 1} A$. We deduce the exact triangle
\begin{align*}
  j_{1}^{*}\mathbb{L}_{j_{2}} \to \mathbb{L}_{j} \to \mathbb{L}_{j_{1}}
\end{align*}

As  we have $j$ and $j_{1}$ are $1$-connected and $j_{2}$ is $2$-connected, we deduce from
connective estimates  that $\mathbb{L}_{j}$ and $\mathbb{L}_{j_{1}}$ are
$2$-connective and $\mathbb{L}_{j_{2}}$ is $3$-connective (See Corollary 5.5 in \cite{2013arXiv1310.3573P}). We deduce from the long exact sequence
that $\pi_{2}(\mathbb{L}_{j})=\pi_{2}(\mathbb{L}_{j_{1}})$. How we apply Lemma 2.2.2.8 in
\cite{Toen-Vezzosi-2008-HAGII} that implies that $\pi_{2}(\mathbb{L}_{j_{1}})=\pi_{1}(A)$.

As we have that $\pi_{k}(\mathbb{L}_{j})=0$ for all $k\neq 2$ and
$\pi_{2}(\mathbb{L}_{j})=\pi_{1}(A)$, we deduce that $\mathbb{L}_{j}[-1]\simeq \pi_{1}(A)[1]$.
\end{proof}

\begin{proof}[Proof of Proposition \ref{prop:qsmooth-dagvectbundle}]
To prove the proposition,  we will show that
\begin{align}\label{eq:25}
  B:=\Sym_{\pi_{0}(A)} (\pi_{1}(A)[1]) \simeq A
\end{align}
First, we will construct a morphism $f:B\to A$. Notice that $\pi_{1}(A)$ is a free $\pi_{0}(A)$
module by the last statement of Lemma \ref{lem:appen}. Then we get an inclusion $\pi_{1}(A)[1]\to A$
of $\pi_{0}(A)$-modules which induces $f:B\to A$. Moreover $f$ is an equivalence on $\pi_{0}$ and
$\pi_{1}$ that is $\tau_{\leq 1}B\simeq \tau_{\leq 1}A$.

Then we construct an inverse from $A\to B$ using the Postnikov tower. We have $\varphi:A\to\tau_{\leq
  1}A\simeq \tau_{\leq 1}B$. As $B$ is the colimit of its
Postnikov tower, we will proceed by induction on the Postnikov tower. First, we want to lift the
morphism $\varphi:A \to \tau_{\leq 1}B$ to $A\to \tau_{\leq 2}B$. We use the following cartesian diagram
(See Remark 4.3 in \cite{2013arXiv1310.3573P})
\begin{align}\label{eq:26}
  \xymatrix{\tau_{\leq 2}B \ar[r] \ar[d]& \tau_{\leq 1}B\ar[d]^{d} \\ \tau_{\leq 1} B\ar[r]^-{\Id,0} &\tau_{\leq 1}B \oplus \pi_{2}(B)[3]}
\end{align}
Hence, we need to construct a commutative diagram 
\begin{align}\label{eq:27}
  \xymatrix{A \ar[r]^{\varphi} \ar[d]^{\varphi}& \tau_{\leq 1}B\ar[d]^{d} \\ \tau_{\leq 1} B\ar[r]^-{\Id,0} &\tau_{\leq 1}B \oplus \pi_{2}(B)[3]}
\end{align}
As $\mathbb{L}_{A}$ has a tor-amplitude in $[-1,0]$, we have that
\begin{align*}
  \pi_{0}(\Map(\mathbb{L}_{A},\pi_{2}(B)[3]))=0\\
\pi_{1}(\Map(\mathbb{L}_{A},\pi_{2}(B)[3]))=0
\end{align*}
Hence we deduce a morphism from $\psi:A\to A\oplus_{d\circ \varphi}\pi_{2}(B)[3]$.  Hence we get the
morphism from $A\to B_{\tau_{\leq 2}}$.
\begin{align*}
  \xymatrix{
A \ar@/^/[rrrrd] \ar@/_/[ddddr] \ar@{.>}[rd]^-{\psi}&&&&&\\
& A\oplus_{d\circ \varphi}\pi_{2}(B)[3] \ar[ddd]\ar[rrr] \ar[rd]&&& A \ar[ddd]^{d\circ \varphi} \ar[ld]^-{\varphi}& \\
&    &B_{\tau_{\leq 2}}\ar[r] \ar[d]& B_{\tau{\leq 1}} \ar[d]^-{d} &&\\
&&B_{\tau_{\leq 1}} \ar[r]^-{0}&B_{\tau_{\leq 1}}\oplus \pi_{2}(B)[3]&&\\
&A\ar[rrr]^-{0}\ar[ru]^-{\varphi}&&&A\oplus \pi_{2}(B)[3] \ar[lu]^-{\varphi,\Id}&\\
}
\end{align*}

Hence by induction, we get a morphism from $g:A\to B$.
The composition $g\circ f: B\to A\to B$ is  the identity on $\pi_{1}(B)$ and by the universal
property of $\Sym$, we deduce that $g\circ f=\Id_{B}$. This implies that $\pi_{i}(B)=\wedge^{i}\pi_{1}(A)\to \pi_{i}(A)$
is injective. To finish the proof, we will prove that these morphisms are surjective. 

For this purpose we use another characterization of afffine quasi-smooth derived scheme. Let us fix
generators of $\pi_0(A)$. This choice is determined a surjective map of commutative $k$-algebras $k[x_1,.., x_n]\to
\pi_0(A)$. As the polynomial ring is smooth, we proceed by induction on the Postnikov tower of $A$
to construct a morphism from $k[x_{1}, \ldots ,x_{n}]\to \tau_{\leq n}A$. We use the same idea as
above for constructing the morphism $A\to B$.
We get a map of cdga's $k[x_1,.., x_n]\to A$ which remains a closed immersion.  Moreover, one can now choose
generators for the kernel $I$ of $k[x_{1}, \ldots ,x_{n}]\to \pi_0(A)$, say, $f_1,.., f_m$ whose image in
$I/I^2$ form a basis. The fact that $k[y_1,.., y_m]$ is smooth allows us to extend the zero composition
map
$$
k[y_1,..., y_m]\to k[x_1,.., x_m]\to \pi_0(A)
$$
to map
$$
k[y_1,..., y_m]\to k[x_1,.., x_m]\to A
$$
together with a null-homotopy. This puts $A$ in a commutative square of cdga's
\begin{displaymath}
  \xymatrix{k[y_{1}, \ldots ,y_{m}]\ar[r] \ar[d] & k[x_{1}, \ldots ,x_{n}] \ar[d] \\ k \ar[r]& A}
\end{displaymath}
which we is a pushout square. Indeed, it suffices to show that the canonical map
$$
k\otimes^{\mathbb{L}}_{k[y_1,.., y_m]}k[x_1,..., x_n]\to A
$$
induces an isomorphism between the cotangent complexes. But as $\Spec(A)$ is quasi-smooth, its
cotangent complex is perfect in tor-amplitudes $-1, 0$, meaning that it can  be written as 
$$
{A}^m\to {A}^n
$$
and this identifies with the standard description of the cotangent complex of the derived tensor product $k\otimes^{\mathbb{L}}_{k[y_1,.., y_m]}k[x_1,..., x_n]$.
This implies that surjectivity of the morphisms $\pi_{i}(B)\to \pi_{i}(A)$. 
\end{proof}

\begin{remark}\label{rmk:retract}
  As $F=\Spec A$ is a derived scheme (not necessarily quasi-smooth) and its truncation is $F_0$ is smooth,
  we have that $F_{0}\to F$ admits a retract.  We proceed by induction on the
  Postnikov tower of $A$ to construct a lift
$$
\xymatrix{
& A\ar[d]\\
\pi_0(A)\ar[r]^{\Id}\ar@{->}[ur]&\pi_0(A) 
}
$$
We use the same kind of diagrams as \eqref{eq:26} and \eqref{eq:27}
Indeed, as $\mathbb{L}_{F_0}$ is concentrated in degree 0, all the groups 
$$
\pi_0(\Map(\mathbb{L}_{F_0}, \pi_n(A)[n+1]))=\pi_1(\Map(\mathbb{L}_{F_0}, \pi_n(A)[n+1]))=0
$$
vanish for $n\geq 1$ saying that the liftings exist at each level of the Postnikov tower the space of choices of such liftings is connected. 
\end{remark}

\bibliographystyle{alpha}	
\bibliography{biblio}

\end{document}